\def\imod#1{\allowbreak\mkern10mu({\operator@font mod}\,\,#1)}
\theoremstyle{plain}
\newtheorem{theorem}{Theorem}[section]
\theoremstyle{definition}
\newtheorem{definition}[theorem]{Definition}
\newtheorem{corollary}[theorem]{Corollary}
\newtheorem*{cor*}{Corollary}
\newtheorem*{lemma*}{Lemma}
\newtheorem*{theorem*}{Theorem}
\newtheorem*{clm*}{Claim}
\newtheorem*{proposition*}{Proposition}
\newtheorem{lemma}[theorem]{Lemma}
\newtheorem{question}[theorem]{Question}
\newtheorem{proposition}[theorem]{Proposition}
\newtheorem{remark}[theorem]{Remark}
\newtheorem{clm}[theorem]{Claim}
\newtheorem*{acknowledgement*}{Acknowledgement}
\newtheorem{fact}[theorem]{Fact}
\renewcommand{\phi}{\varphi}
\newcommand{\calA}{\mathcal{A}}
\newcommand{\calB}{\mathcal{B}}
\newcommand{\calF}{\mathcal{F}}
\newcommand{\calI}{\ensuremath{\mathcal{I}}}
\newcommand{\calL}{\mathcal{L}}
\newcommand{\calM}{\mathcal{M}}
\newcommand{\calN}{\mathcal{N}}
\newcommand{\calS}{\mathcal{S}}
\newcommand{\calR}{\mathcal{R}}
\newcommand{\card}[1]{|#1|}
\newcommand{\forces}{\Vdash}
\newcommand{\infsubsets}{[\omega]^{\omega}}
\newcommand{\from}{\colon}
\newcommand{\intersection}{\cap}
\newcommand{\union}{\cup}
\newcommand{\concatB}{\mathbin{\rotatebox[origin=c]{90}{\scalebox{.7}{(\kern1ex)}}}}
\newcommand{\suchthat}{\mid}
\newcommandx{\set}[2][2=]{
   \ifthenelse{\equal{#2}{}}{\{#1\}}{\{ #1 \suchthat #2 \}}
}
\newcommand{\ZFC}{\textsf{ZFC}}
\newcommand{\ZF}{\textsf{ZF}}
\newcommand{\ZFminus}{\textsf{ZF}$^-$}
\newcommand{\CH}{\textsf{CH}}
\definecolor{dodger}{rgb}{0.0,0.5,1.0}
\date{}
\begin{document}


\title{Strong Projective Witnesses}


\author{Vera Fischer}
\address{University of Vienna, Institute for Mathematics, Kolingasse 14-16, 1090 Vienna, Austria}
\email{vera.fischer@univie.ac.at}


\author{Julia Millhouse }
\address{University of Vienna, Institute for Mathematics, Kolingasse 14-16, 1090 Vienna, Austria}
\email{julia.marie.millhouse@univie.ac.at}



\makeatletter
\@namedef{subjclassname@2020}{%
  \textup{2020} Mathematics Subject Classification}
\makeatother
\subjclass[2020]{03E35, 03E17, 03E55}


\keywords{projective well-orders; tight mad families; finite logarithmic measures}


\begin{abstract} 
We show Shelah's original creature forcing from 1984 strongly
preserves tight mad families. In particular, answering 
questions of Fischer and Friedman and Friedman and Zdomskyy, 
we show the constellation 
$\aleph_1 = \mathfrak{a} < \mathfrak{s} = \aleph_2$ 
is consistent with the existence of a $\Delta_3^1$ wellorder
of the reals and tight mad families of sizes $\aleph_1, \aleph_2$ which are $\Pi_1^1, \Pi_2^1$-definable, respectively. 
Each of these projective definitions is of minimal possible 
complexity. 

\end{abstract}

\maketitle

\section{Introduction} \label{SEC_Introduction}

In the following we reveal a combinatorial 
property of a rather well-known proper
forcing notion $\mathbb{Q}$ introduced by Shelah in 1984 (see 
Definition \ref{def creature}, and \cite[Definition 6.8]{Shelah84} for the original definition), 
to obtain the consistency of 
$\mathfrak{b} < \mathfrak{s}$ and thus   
establishing the independence of $\mathfrak{b}$ and 
$\mathfrak{s}$. The notion $\mathbb{Q}$ is 
the first instance of so-called creature forcings, which now encompass 
a broad class of posets (see \cite{RScreatures}). Namely, we show in particular 
that $\mathbb{Q}$ satisfies an iterable preservation property (Definition 
\ref{preserve def}) 
introduced in \cite{GHT},
which guarantees the preservation 
of tight mad families under countable support iterations. 

\begin{theorem*}[Theorem \ref{Prop Q strongly preserves tightness}] \label{this is the main theorem}
    Let $\mathbb{Q}$ be the forcing notion of Definition 
    \ref{def creature}, and let $A \in V$ be a tight mad
    family. Let $G$ be $\mathbb{Q}$-generic over
    $V$. Then 
    $(\calA \text{ is a tight mad family})^{V[G]}$.
\end{theorem*}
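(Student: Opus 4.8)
The plan is to establish the property of Definition~\ref{preserve def} for $\mathbb{Q}$ with respect to the fixed tight mad family $A$; the statement of the theorem is then immediate, since that property entails in particular that every $\mathbb{Q}$-name for an $\mathcal{I}(A)$-positive set can be diagonalised into a single member of $A$, which is exactly what is needed for $A$ to remain tight and mad in $V[G]$ (the productivity of the property under countable support iteration being the content of the main theorem of \cite{GHT}, used elsewhere in the paper). Recall from Definition~\ref{def creature} that a condition $p \in \mathbb{Q}$ is a finite stem $\stem(p)$ together with an infinite sequence $\langle c^p_k : k<\omega \rangle$ of creatures — finite logarithmic measures carried on consecutive intervals $I^p_k$ tiling a final segment of $\omega$ — with $\lim_k \operatorname{nor}(c^p_k)=\infty$, and that $q\le p$ means $\stem(q)$ end-extends $\stem(p)$ while each $c^q_k$ arises from the creatures of $p$ by passing to subcreatures, merging finitely many consecutive ones, and discarding an initial segment, always keeping norms large. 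This order carries the usual fusion: a sequence $p_0\ge p_1\ge\cdots$ with $p_{n+1}\le_n p_n$ — where $\le_n$ freezes the stem and the first $n$ creatures and keeps all later norms $\ge n$ — has a greatest lower bound $\bigwedge_n p_n$.

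\textbf{Combinatorial core.} The driving fact is bigness (``halving'') of finite logarithmic measures: if $\operatorname{nor}(c)\ge n+1$ and $I^c=D_0\cup D_1$, then $\operatorname{nor}(c\restriction D_i)\ge n$ for some $i$; iterating, any partition of $I^c$ into at most $2^m$ pieces leaves some piece with norm $\ge \operatorname{nor}(c)-m$. Consequently, for finitely many ground-model sets $D_0,\dots,D_{m-1}$ and a creature $c$ of large norm there is a subcreature $c'\subseteq c$ with $\operatorname{nor}(c')\ge\operatorname{nor}(c)-m$ homogeneous for each $D_i$, and if the $D_i$ already cover $I^c$ one can insist $I^{c'}\subseteq D_i$ for some $i$. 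Together with fusion this gives the continuous reading of names on which everything rests: below a suitable pure extension of any given $p$, a name $\dot Y\subseteq\omega$ becomes a continuous function of the generic — for each finite passing sequence $s$ through the creatures of $p$ there is a finite set $Y_s\subseteq\omega$ that $p$ together with $s$ decides as an initial part of $\dot Y$, and $\dot Y$ is the increasing union of the $Y_s$ along the generic branch. This is standard for Shelah's $\mathbb{Q}$ and may be quoted from the preliminaries.

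\textbf{Extraction and diagonalisation.} Fix $p\in\mathbb{Q}$ and a $\mathbb{Q}$-name $\dot Y$ with $p\Vdash\dot Y\in\mathcal{I}(A)^+$; the full form of Definition~\ref{preserve def} treats a sequence $\langle\dot Y_n:n<\omega\rangle$ of such names at once, which costs only an interleaving of the construction below. Passing first to a pure extension reading $\dot Y$ continuously, one extracts from the finite sets $Y_s$ a descending-in-information sequence of ground-model sets $\langle Z_k:k<\omega\rangle$ — roughly, $Z_k$ codes the values that $\dot Y$ can still take once the generic has passed through the first $k$ creatures with norms kept $\ge k$ — and one shows each $Z_k$ is $\mathcal{I}(A)$-positive, using that $\dot Y$ is forced positive and that below $p$ one can always require the norms from stage $k$ onwards to exceed $k$. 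Now apply tightness of $A$ to $\langle Z_k:k<\omega\rangle$ to obtain $a\in A$ with $a\cap Z_k$ infinite for every $k$. Finally build $q\le p$ by fusion: at step $k$, with the stem and first $k$ creatures of $q$ already fixed and a norm bound $\ge k$ in force, pick $j_k\in a\cap Z_k$ beyond everything chosen so far, realised by some passing sequence; extend the stem of $q$ accordingly and shrink the next creature, via the homogeneity above, so that every passing sequence through it forces a new element of $a$ into $\dot Y$, at the price of a bounded norm loss that the fusion absorbs. Then $q\le p$ and $q\Vdash|a\cap\dot Y|=\aleph_0$, which is precisely the instance of Definition~\ref{preserve def} required. (For the sequence version, the same $a$ is made to work for all $\dot Y_n$ simultaneously by interleaving.)

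This done, $\mathbb{Q}$ strongly preserves the tight mad family $A$, and the theorem follows: a $\mathbb{Q}$-name for an infinite set forced into $\mathcal{I}(A)$ is almost covered by finitely many elements of $A$ and hence meets one of them infinitely, while a name forced positive is handled by the argument above, so $A$ stays mad in $V[G]$; and the sequence version is exactly tightness of $A$ in $V[G]$. I expect the main obstacle to be the extraction step at the precision Definition~\ref{preserve def} actually demands — organising the fusion so that the auxiliary ground-model sets are genuinely $\mathcal{I}(A)$-positive (this is where it is used that $\dot Y$ is forced positive and that norms can be driven up past any prescribed stage) and, simultaneously, maintaining the bookkeeping that renders the property iterable for the sequence $\langle\dot Y_n\rangle$. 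The logarithmic-measure combinatorics itself is routine.
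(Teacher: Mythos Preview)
Your proposal has two genuine gaps, one conceptual and one technical.

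\textbf{Conceptual.} You misread Definition~\ref{preserve def}. Strong preservation does \emph{not} ask you to find, for a sequence of names $\langle\dot Y_n\rangle$, some $a\in\calA$ and a condition forcing $|a\cap\dot Y_n|=\omega$. Rather, the set $B\in\calI(\calA)$ is \emph{handed to you} in advance (together with the countable $M\prec H_\theta$), subject only to the hypothesis that $B$ meets every element of $\calI(\calA)^+\cap M$ infinitely; you must then produce an $(M,\mathbb{Q})$-generic $q\leq p$ forcing $|B\cap\dot Z|=\omega$ for \emph{every} name $\dot Z\in M$ for a positive set. Your step ``apply tightness of $A$ to $\langle Z_k\rangle$ to obtain $a\in A$'' is therefore backwards: you are choosing the witness, but the definition does not let you. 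What you sketch would at best give single-step preservation of tightness (which is the displayed statement), but not the iterable property you claim to be proving, and the paper needs the latter for its applications.

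\textbf{Technical.} Even for the weaker goal, your fusion does not close. You write ``pick $j_k\in a\cap Z_k$ \ldots\ extend the stem of $q$ accordingly,'' but your own $\leq_n$ freezes the stem; if the stem grows at every stage the limit is not a condition. The point is that membership of some $m$ in a vanilla outer hull is witnessed by a condition with an \emph{arbitrary} stem extension, and there is no way to absorb that into a pure fusion. The paper's fix is exactly this: it replaces the outer hull by a refined version $W_{n+1}$ (Claim~\ref{claim outer hull creature}) consisting of those $m$ for which there is a $\leq_{n+1}$-extension $r$ such that for \emph{every} finite $v\subseteq n$ end-extending the stem, every $i\geq n$, and every $r_i$-positive $s$, some $w\subseteq s$ gives $(v\cup w, R\setminus s)\Vdash m\in\dot Z_n$. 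Showing that this much stronger hull is still $\calI(\calA)^+$ is the real work (it needs the induced logarithmic measures and Corollary~\ref{n+1 extension of stepB}); once you have it, the hypothesis on $B$ supplies $m_{n+1}\in W_{n+1}\cap B$, the witnessing $r$ becomes $q_{n+1}$, the stem never moves, and the fusion is a genuine $(M,\mathbb{Q},\calA,B)$-generic condition. Your ``extraction'' paragraph gestures toward an outer hull but not toward this uniformity over all future stems, which is precisely the missing idea.
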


The poset $\mathbb{Q}$ adds a generic real
unsplit by the ground model reals in a similar manner as 
Mathias forcing, though unlike Mathias forcing, 
it is almost $\omega^\omega$-bounding and so 
countable support iterations of $\mathbb{Q}$ over a model of \textsf{CH} 
preserve the ground model reals unbounded, thus leading to the
consistency of $\aleph_1 = \mathfrak{b} < \mathfrak{s} = \aleph_2$
(see \cite[Theorem 3.1]{Shelah84}).
Since $\mathfrak{b} \leq \mathfrak{a}$, the results of the current paper
give an alternative proof and a  
natural strengthening of the above theorem.
This newly discovered preservation property of Shelah's 
creature poset $\mathbb{Q}$ plays a crucial role in 
obtaining the following interesting result, lying at 
the intersection of descriptive set theory and set theory
of the reals. Using a countable support iteration of 
$S$-proper forcing notions and the preservation of {\emph{nicely definable witnesses}} 
to $\mathfrak{a}$ in such iterations we obtain our main result:
\begin{theorem*}[Theorem \ref{themaintheorem|}]
It is consistent with $\aleph_1 = \mathfrak{a} < \mathfrak{s} = 
\mathfrak{c} = \aleph_2$ that there exists
a $\Delta_3^1$ wellorder of the reals, a coanalytic 
tight mad family of size $\aleph_1$, and a 
$\Pi_2^1$ tight mad family of size $\aleph_2$. 
\end{theorem*}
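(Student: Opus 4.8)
The plan is to combine three ingredients into a single countable support iteration of length $\omega_2$ over a model of $\mathsf{CH} + V=L$ (or a suitable coding hypothesis): (1) the $S$-proper coding machinery that produces a $\Delta^1_3$ wellorder of the reals while preserving a fixed $\Pi^1_1$ tight mad family of size $\aleph_1$ constructed in $L$; (2) Shelah's creature forcing $\mathbb{Q}$ from Definition~\ref{def creature}, whose strong preservation of tight madness (Theorem~\ref{Prop Q strongly preserves tightness}) lets us keep that $\Pi^1_1$ family mad throughout the iteration while simultaneously driving $\mathfrak{s}$ up to $\aleph_2$; and (3) at a club of stages, a forcing that generically adds a new real into a $\Pi^1_2$-definable mad family of size $\aleph_2$, whose final-model definability is secured by the coding. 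The key point is that all three components can be arranged to be $S$-proper (for a fixed stationary co-stationary $S \subseteq \omega_1$), almost $\omega^\omega$-bounding (so that $\mathfrak{b}$, and hence $\mathfrak{a} \geq \mathfrak{b}$ — wait, we want $\mathfrak{a}=\aleph_1$, so rather $\mathfrak{a}$ is witnessed small by the preserved family, while unboundedness of the ground reals is \emph{not} needed), and to preserve tight madness of the $L$-constructed family.

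\smallskip

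\emph{First} I would fix in $L$ a $\Pi^1_1$ tight mad family $A_0$ of size $\aleph_1$ — this is standard from $\diamond$ plus a careful recursive construction ensuring a $\Sigma^1_2$-good parametrization, or one can import it from the literature on definable mad families (Miller, Törnquist, Schrittesser–Törnquist). Tightness must be built in by hand during the construction, interleaving the maximality requirements with the tightness requirements (for each countable sequence of members, diagonalizing against every real that could threaten the tight-covering property). \emph{Second} I would set up the iteration $\langle \mathbb{P}_\alpha, \dot{\mathbb{Q}}_\alpha : \alpha < \omega_2 \rangle$ with countable support, where on a carefully bookkept stationary set of stages we force with $\mathbb{Q}$ (to handle $\mathfrak{s} = \aleph_2$), on another set of stages we force the coding poset for the $\Delta^1_3$ wellorder (localizing $L$-like behavior so that the wellorder becomes $\Delta^1_3$ by a Solovay-style almost-disjoint or "David's trick" coding, absorbing parameters into a $\Sigma^1_3$/$\Pi^1_3$-correct core model segment), and on a third set of stages we add reals extending the second mad family. \emph{Third}, the preservation theorem: I would show that $\mathbb{Q}$ and the coding poset both fall under the iterable preservation property of Definition~\ref{preserve def} from \cite{GHT}, so that by the iteration theorem there, the tight mad family $A_0$ remains tight and mad in $V[G_{\omega_2}]$; since $|A_0| = \aleph_1 < \aleph_2 = \mathfrak{c}$, this yields $\mathfrak{a} = \aleph_1$.

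\smallskip

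\emph{Fourth}, the definability bookkeeping. The $\Pi^1_1$ family $A_0$ survives verbatim because its definition is absolute and it stays mad; its complexity is provably minimal since no analytic infinite mad family exists (Mathias). The second family $A_1$ of size $\aleph_2$ is assembled along the iteration as an increasing union; to make it $\Pi^1_2$ one codes its members into the $\Delta^1_3$ wellorder itself — a real $x$ belongs to $A_1$ iff it is almost disjoint from all earlier members and the wellorder "says" it was chosen, a condition one arranges to be $\Pi^1_2$ by reflecting the relevant initial segment of the coding. Minimality of $\Pi^1_2$ for a mad family of size $\aleph_2 > \aleph_1$ follows from the fact that $\Sigma^1_2$ (hence $\mathbf{\Delta}^1_2$) sets have the property that any $\Sigma^1_2$ mad family has size $\leq \aleph_1$ under the relevant hypotheses, so $\aleph_2$-sized ones cannot be $\Sigma^1_2$.

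\smallskip

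\emph{The main obstacle} I anticipate is harmonizing the coding forcing with the creature forcing $\mathbb{Q}$ within a \emph{single} $S$-proper, tightness-preserving iteration: $\mathbb{Q}$ is almost $\omega^\omega$-bounding and has a delicate fusion/creature structure, and one must check that interleaving it with the (typically $\sigma$-closed $*$ ccc or Jensen-style) coding poset neither destroys $S$-properness nor the Definition~\ref{preserve def} preservation property, and in particular that the coding poset itself strongly preserves tight madness of $A_0$ — this last verification, running a fusion argument against the coding conditions to produce the required tight witness, is where the real work lies. A secondary difficulty is ensuring the bookkeeping simultaneously (a) meets all $\aleph_2$ maximality/tightness tasks for $A_1$, (b) realizes the full $\Delta^1_3$ wellorder, and (c) pushes $\mathfrak{s}$ to $\aleph_2$, all while the length stays $\omega_2$ and $\mathsf{CH}$-at-stages is maintained for properness of the iteration.
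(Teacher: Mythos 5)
Your proposal is correct and follows essentially the same route as the paper: over $V=L$, a countable support iteration that interleaves Shelah's creature forcing $\mathbb{Q}$ (for $\mathfrak{s}=\aleph_2$), the Fischer--Friedman Sacks/localization coding (for the $\Delta^1_3$ wellorder), and a Friedman--Zdomskyy-style almost-disjoint coding poset building the $\Pi^1_2$ tight mad family, with $\mathfrak{a}=\aleph_1$ secured by the strong preservation of tightness of a fixed $\Pi^1_1$ family in the sense of Definition~\ref{preserve def}. The only execution detail you leave implicit is that the paper resolves your ``harmonization'' worry by fixing three pairwise disjoint stationary sets $T_0,T_1,T_2$ and running the two codings on separate stationary/costationary sequences inside $T_0$ and $T_1$, so that every iterand is $T_2$-proper and the $\Pi^1_2$ family is coded by its own stationarity pattern rather than into the wellorder itself.
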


Classical theorems of 
Mansfield, Mathias \cite{MATHIAS197759}, and Mansfield-Solovay, respectively, imply that these projective definitions are of minimal descriptive complexity, thus optimal. The above 
Theorem will be proved through
a series of intermediary steps, beginning with the
construction of a $\Delta_3^1$ wellorder of the reals
of Fischer and Friedman from \cite{FF2010}, and establishing the following:

\begin{theorem*}[Theorem \ref{delta 13 with a < s}]
It is consistent with $\aleph_1 = \mathfrak{a} < \mathfrak{s} 
= \mathfrak{c} = \aleph_2$ that there exists a $\Delta_3^1$-definable
wellorder of the reals and moreover $\mathfrak{a} = \aleph_1$
is witnessed by a $\Pi^1_1$ tight mad family.
\end{theorem*}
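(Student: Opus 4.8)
The plan is to build the model as a countable support iteration of length $\omega_2$ over a suitable ground model, interleaving two kinds of forcing: copies of Shelah's creature poset $\mathbb{Q}$ (to push $\mathfrak{s}$ up to $\aleph_2$ while keeping $\mathfrak{b}=\aleph_1$, hence a fortiori $\mathfrak{a}=\aleph_1$), together with the coding machinery of Fischer and Friedman from \cite{FF2010} that produces a $\Delta_3^1$ wellorder of the reals. First I would fix a ground model of \textsf{CH} (indeed of $\diamond$ or $V=L$, as in \cite{FF2010}) in which there is a $\Pi^1_1$ (lightface, using a $\Sigma^1_2$ good wellorder / a $\Delta^1_2$-definable tight mad family — the standard fact that in $L$ one can build a coanalytic maximal almost disjoint family by a Mathias-style recursion along a $\Sigma^1_2$-good wellorder, and moreover arrange it to be \emph{tight}). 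Call this family $\calA \in V$. The whole point of the preceding sections — in particular Theorem \ref{Prop Q strongly preserves tightness} and the fact that $\mathbb{Q}$ (and the coding posets, which are $S$-proper and suitably definable) satisfy the iterable preservation property of Definition \ref{preserve def} — is that $\calA$ remains a tight mad family in the final extension $V[G_{\omega_2}]$.

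The key steps, in order, are: (1) Set up the bookkeeping so that the iteration $\langle \bbP_\alpha, \dot{\bbQ}_\alpha : \alpha < \omega_2\rangle$ alternates between (a) the creature forcing $\mathbb{Q}$, computed in the current model, used cofinally often to diagonalize the splitting families of all intermediate models (so that $\mathfrak{s} = \aleph_2$ in the end), and (b) the Fischer–Friedman coding forcings that localize the generic branches and write the $\Delta_3^1$ wellorder; as in \cite{FF2010}, one also needs the preliminary "preparation" forcing (e.g. adding $\diamond$ on stationary sets, or the $\Sigma^1_3$-absoluteness-preserving coding apparatus) to make the wellorder $\Delta^1_3$ rather than merely $\Sigma^1_3$. (2) Check that each iterand is proper, $\aleph_2$-c.c.\ (under \textsf{CH} in the current model, standard for countable support iterations of proper forcings of size $\leq \aleph_1$), and \emph{$S$-proper} so that the wellorder-coding is not destroyed — this is exactly the framework quoted before the theorem. (3) Invoke Theorem \ref{Prop Q strongly preserves tightness} and the closure of strong preservation of tight madness under countable support iteration (Definition \ref{preserve def} and the iteration theorem from \cite{GHT}) to conclude $\calA$ is still tight mad in $V[G_{\omega_2}]$; since $\calA$ had size $\aleph_1$, this gives $\mathfrak{a} = \aleph_1$, and since the defining $\Pi^1_1$ formula is upward absolute for the relevant reals (the definition is of a specific real coding $\calA$, or $\calA$ is the set of reals satisfying a fixed $\Pi^1_1$ formula with the leftmost branch of the tree, by Mansfield-style arguments this complexity is optimal), $\mathfrak{a}=\aleph_1$ is witnessed by a $\Pi^1_1$ tight mad family. (4) Verify $\mathfrak{s} = \aleph_2$: any splitting family of size $\aleph_1$ appears at some stage $\alpha < \omega_2$ by $\aleph_2$-c.c.\ and a reflection/bookkeeping argument, and the next $\mathbb{Q}$-generic real is unsplit by it; combined with $\mathfrak{b}=\aleph_1$ (from almost-$\omega^\omega$-boundingness of $\mathbb{Q}$ and the Fischer–Friedman posets, preserved in the iteration, keeping the ground-model reals unbounded — cf.\ \cite[Theorem 3.1]{Shelah84}) and $\mathfrak{c} = \aleph_2$, we get $\aleph_1 = \mathfrak{a} \leq \mathfrak{b} \cdot(\text{stuff}) < \mathfrak{s} = \mathfrak{c} = \aleph_2$; note $\mathfrak{a}=\aleph_1$ is witnessed directly by $\calA$, independently of $\mathfrak{b}$.

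I expect the main obstacle to be \textbf{compatibility of the two agendas}: the Fischer–Friedman coding needs the iteration to be built over $L$ (or a model with enough condensation) and to preserve a certain $\Sigma^1_3$-correctness so that the wellorder ends up $\Delta^1_3$, while the creature forcing $\mathbb{Q}$ must be shown to be among the posets that this coding framework tolerates — i.e.\ one must check $\mathbb{Q}$ is "tame"/definable enough (provably $\Delta^1_2$ or better as a forcing notion, preserving the relevant absoluteness and not adding reals that decode into spurious codes) and that it is $S$-proper in the sense required for the wellorder to survive. A secondary delicate point is ensuring the \emph{same} $\Pi^1_1$ formula defines $\calA$ in the extension: since $\mathbb{Q}$ and the coding posets add reals, one must confirm (using the tightness-preservation, and the $\Pi^1_1$-ness being a statement about a fixed real/tree in $V$ that remains correctly computed) that no new reals sneak into or out of the defined set — this is where the "minimal complexity" remark (via Mansfield, Mathias \cite{MATHIAS197759}, Mansfield–Solovay) both constrains and guides the construction. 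The remaining verifications — properness, chain condition, the bookkeeping that every $\aleph_1$-sized splitting family is anticipated — are routine given the machinery assembled in the earlier sections.
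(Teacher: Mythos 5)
Your proposal follows essentially the same route as the paper: a countable support iteration of length $\omega_2$ over $L$ whose iterands are Shelah's creature forcing $\mathbb{Q}$ followed (under bookkeeping) by the Fischer--Friedman club-shooting/localization/Sacks-coding block, with Theorem \ref{Prop Q strongly preserves tightness}, the iterability of strong preservation of tightness, and Fact \ref{properties of C(Y)}(5) keeping the coanalytic tight mad family of Lemma \ref{Lemma exists coanalytic tight} intact, and the $\aleph_2$-c.c.\ capturing every small family at an initial stage so that $\mathfrak{s}=\aleph_2$. One small slip: the aside ``keeping $\mathfrak{b}=\aleph_1$, hence a fortiori $\mathfrak{a}=\aleph_1$'' has the inequality backwards (since $\mathfrak{b}\leq\mathfrak{a}$), but this is harmless because, as you yourself note, $\mathfrak{a}=\aleph_1$ is witnessed directly by the preserved tight mad family.
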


In the recent literature there is an increased interest in the
projective definability of witnesses to cardinal characteristics;
the relevant body of work includes, for example, 
\cite{bergfalk2022projective}, \cite{FFZ11}, 
\cite{FSmed}, \cite{FFK13}, \cite{BK12}, \cite{FFST} or \cite{FSS25}.
We  extend this line of inquiry by 
investigating the projective definability of 
witnesses for all cardinals $\kappa$ belonging to the set
$\mathrm{spec}(\mathfrak{a}) = \set{ \card{\calA}}[\calA \subseteq
\infsubsets, \calA  \text{ is
mad}]$.
Towards this end we examine the $S$-proper
countable support iteration of
Friedman and Zdomskyy \cite{Friedmanzdomskyy} 
used in establishing the consistency of 
$\mathfrak{b} = \mathfrak{c} = \aleph_2$ with 
a $\Pi_2^1$-definable 
tight mad family, which is necessarily of cardinality $\mathfrak{c}=\aleph_2$. 
In particular, answering \cite[Question 18]{Friedmanzdomskyy}, 
we obtain:

\begin{theorem*}[Theorem \ref{Theorem FZ section 2 sizes}] It is consistent that 
$\mathfrak{a} = \aleph_1 < \mathfrak{c} = \aleph_2$, 
and there exists a $\Pi_1^1$-definable tight mad family of
size $\aleph_1$, 
and a $\Pi_2^1$-definable tight mad family of size $\aleph_2$. 
\end{theorem*}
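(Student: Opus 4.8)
The plan is to run a version of the $S$-proper countable support iteration of \cite{Friedmanzdomskyy} — the one producing a $\Pi_2^1$ tight mad family — over the ground model $L$, with the crucial modification that the mad-family-extending iterand is replaced by a coded relativization of Shelah's creature poset $\mathbb{Q}$ of Definition \ref{def creature}. The original Friedman and Zdomskyy iterand adds dominating reals, which is why $\mathfrak{b}=\mathfrak{c}=\aleph_2$ and hence $\mathfrak{a}=\aleph_2$ in their model; the replacement iterand instead has the iterable preservation property of Definition \ref{preserve def}, so that a ground-model $\Pi_1^1$-definable tight mad family of size $\aleph_1$ survives the whole iteration and forces $\mathfrak{a}=\aleph_1$, while the iteration still adds a $\Pi_2^1$-definable tight mad family of size $\aleph_2$ and makes $\mathfrak{c}=\aleph_2$.

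First I would fix, in $L$, a $\Pi_1^1$-definable tight mad family $A=\{a_\xi:\xi<\omega_1^L\}$ whose defining $\Pi_1^1$ formula is \emph{robust}: it continues to define exactly $A$ in every proper forcing extension of $L$. Such a family is obtained by the standard recursive coding construction over $L$ (as for the $\Pi_1^1$ part of Theorem \ref{delta 13 with a < s}), in which $A$ is built along the $L$-order while at each stage the new element together with the approximation so far is written into the reals in a rigid, locally verifiable way, so that ``$x\in A$'' becomes $\Pi_1^1$. Since the correct initial segments of this construction are unique and of length $\omega_1^L=\omega_1^{L[G]}$ for a proper $G$, no proper forcing extension contains a real satisfying the defining formula other than the $a_\xi$, and so $A^{L[G]}=A$.

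Next I would perform over $L$ the countable support iteration $\langle\mathbb{P}_\alpha,\dot{\mathbb{Q}}_\alpha:\alpha<\omega_2\rangle$ in which, under a suitable bookkeeping, each $\dot{\mathbb{Q}}_\alpha$ is a name for the two-step iteration of a relativization of $\mathbb{Q}$ to the filter dual to the almost disjoint family $\dot B_\alpha$ accumulated so far — which adds a real $\dot b_\alpha$ almost disjoint from $\dot B_\alpha$ — followed by the Friedman and Zdomskyy coding factor that writes $\dot b_\alpha$ into the reals. As in \cite{Friedmanzdomskyy}, the bookkeeping is arranged so that $B=\{b_\alpha:\alpha<\omega_2\}$ is forced to be a tight mad family of size $\aleph_2$, so that $\mathbb{P}_{\omega_2}$ has the $\aleph_2$-chain condition, and so that, with the coding factor of \cite{Friedmanzdomskyy}, $B$ is $\Pi_2^1$-definable in $L[G]$. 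Each $\dot{\mathbb{Q}}_\alpha$ is $S$-proper — its first component because relativizing a proper forcing to a filter stays proper, its second by the argument of \cite{Friedmanzdomskyy} — so $\mathbb{P}_{\omega_2}$ is $S$-proper, preserves $\aleph_1$, makes $\mathfrak{c}=\aleph_2$, and the robust $\Pi_1^1$ formula still defines $A$ in $L[G]$. Finally, both components of each $\dot{\mathbb{Q}}_\alpha$ satisfy Definition \ref{preserve def} — the relativized $\mathbb{Q}$ by (the proof of) Theorem \ref{Prop Q strongly preserves tightness}, the coding factor by a direct verification — so by the iterability established in \cite{GHT} the iteration $\mathbb{P}_{\omega_2}$ strongly preserves tight mad families, whence $A$ remains a tight, in particular maximal, almost disjoint family in $L[G]$. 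Thus $\mathfrak{a}\le|A|=\aleph_1$, and since $\mathfrak{a}\ge\aleph_1$ we obtain $\mathfrak{a}=\aleph_1<\aleph_2=\mathfrak{c}$ in $L[G]$ (the presence of the larger family $B$ is harmless, $\mathfrak{a}$ being the least size of a mad family).

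The main obstacle is verifying that the iterands really do satisfy Definition \ref{preserve def}. Theorem \ref{Prop Q strongly preserves tightness} is stated for the bare poset $\mathbb{Q}$, so one must check that its proof — the fusion argument built on the finite logarithmic measures of the creatures — goes through when $\mathbb{Q}$ is relativized to the filter dual to $\dot B_\alpha$, and that the Friedman and Zdomskyy coding factor, being a forcing of a quite different nature, also has the property; carrying this out for the composite two-step iterand is the main technical work. By contrast, checking that the recursive $\Pi_1^1$ coding of $A$ over $L$ survives this particular $S$-proper iteration is routine.
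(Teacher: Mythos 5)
There is a genuine gap, and it sits exactly where the paper's actual work is. Your plan defers the central difficulty to two asserted facts: that a ``relativization of $\mathbb{Q}$ to the filter dual to $\dot B_\alpha$'' strongly preserves the tightness of the ground-model $\Pi^1_1$ family (claimed to follow from ``the proof of'' Theorem \ref{Prop Q strongly preserves tightness}, which is proved only for the unrelativized $\mathbb{Q}$ and does not obviously survive restricting the logarithmic measures to a coideal), and that the Friedman--Zdomskyy coding factor does so ``by a direct verification.'' That second verification is the theorem's real content. The paper does not replace the mad-family-extending iterand at all: it keeps the Friedman--Zdomskyy poset $\mathbb{K}_\alpha$ (Definition \ref{def fz}) and proves Proposition \ref{main} --- that $\mathbb{K}_\alpha$ itself admits $(M,\mathbb{K}_\alpha,\mathcal{A}_1,B)$-generic conditions --- via the pure outer hull Lemma \ref{pure outer hull}, which requires analyzing the product of club-shooting and localization posets making up the infinite part of a condition. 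Relatedly, your motivating premise is off: it was the Hechler reals interleaved into the Friedman--Zdomskyy iteration that forced $\mathfrak{b}=\mathfrak{c}=\aleph_2$, not the iterand $\mathbb{K}_\alpha$; whether $\mathbb{K}_\alpha$ itself destroys small tight mad families was precisely their Question 18, which the paper answers in the negative. No creature forcing is needed for this theorem at all (it enters only in the combined construction of Section \ref{section putting it all together}).

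The proposed two-step decomposition also damages the $\Pi^1_2$ definability. In the Friedman--Zdomskyy argument the generic real $a_\alpha$ is produced by the finite parts $(s,s^*)$ and \emph{itself} almost-disjointly codes, via its intersections with the family $\mathcal{R}$, the clubs $C^\alpha_m$ and the localizing sets $Y^\alpha_m$; this is what makes the clause ``for every countable suitable $\mathcal{M}$ containing $a$ there is $\overline{\alpha}<\omega_2^{\mathcal{M}}$ with $S_{\overline{\alpha}+m}$ nonstationary for $m\in\Delta(a)$'' true of the members of $\mathcal{A}_2$: each such $\mathcal{M}$ can reconstruct the witnesses from $a$ alone. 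If $b_\alpha$ is a creature-generic real and the coding is done afterwards by a separate factor, then $b_\alpha$ carries none of this information, a countable suitable model containing only $b_\alpha$ need not see the nonstationarity, and the universal direction of the $\Pi^1_2$ definition fails (replacing $b_\alpha$ by the coding real changes the family, and quantifying over an auxiliary coding real raises the complexity). So even granting the preservation claims, the construction as described would not yield a $\Pi^1_2$ tight mad family of size $\aleph_2$.
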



The paper is organized as follows: In Section 2 we introduce the forcing notion $\mathbb{Q}$, 
the notion of a tight mad family and strong preservation of 
tightness, and prove Theorem \ref{Prop Q strongly preserves tightness}. In Section 3
we give an account of the countable support iteration of 
Fischer and Friedman for adjoining  a $\Delta_3^1$ wellorder of the reals, 
and prove Theorem \ref{delta 13 with a < s}. 
Section 4 investigates Friedman-Zdomskyy poset mentioned above 
and proves Theorem \ref{Theorem FZ section 2 sizes}. 
Building on the thus established results, in Section 5 we 
give our main result Theorem \ref{themaintheorem|}. The last section
includes final remarks and open questions.

\section{Creature Forcing}

Define the relation $\leq^*$ of \emph{eventual domination} on the 
collection $\omega^\omega$ by letting $f \leq^* g$ if and only if 
there exists $n \in \omega$ such that for all 
$m \geq n$,
$f(m) \leq g(m)$, and the relation \emph{splits} on 
the collection $\infsubsets$ by letting 
$a $ split $b$ if and only if both 
$b \intersection a$ and $b \setminus a$ are infinite. 
In 1984 Shelah answered a question of Nyikos by showing 
that it is consistent that the minimal size of a family 
$\calF \subseteq \omega^\omega$ which is unbounded with respect
to $\leq^*$ can be strictly less
than the minimal size of a splitting family
$\calS \subseteq \infsubsets$, that is, 
a family $\calS$ with the property that for any infinite 
$b \subseteq \omega$ there is $a \in \calS$ which 
splits $b$. 
 We denote with $\mathfrak{b}$ and $\mathfrak{s}$ these respective 
 cardinalities.
 That $\mathfrak{s} < \mathfrak{b}$ is consistent was already 
 shown in 1980 by Balcar and Simon \cite[Remark 4.7]{BPS80}; the inequality also holds in the Hechler model 
 (see \cite[Section 4]{Shelah84}).
%
%
A family $\calA \subseteq \infsubsets$ is \emph{almost disjoint}
if $a \intersection b$ is finite for every $a,b \in \calA$, 
and such a familly is \emph{maximal almost disjoint (mad)}
if $\calA$ is maximal with respect to inclusion among all
almost disjoint families; equivalently, for every 
$b \in \infsubsets$ there exists $a \in \calA$
such that $a \intersection b$ is infinite. We will always 
consider infinite mad families. The cardinal $\mathfrak{a}$
denotes the minimal size of a mad family; the cardinals 
$\mathfrak{a}$ and $\mathfrak{d}$ are independent.

 One approach to obtaining a model in which 
  $\mathfrak{s} = \aleph_2$
 is with a countable support iteration of Mathias forcing 
 $\mathbb{M}$, which adds a generic real 
 $a \in \infsubsets$ that is not split by any subset of the ground model. However
 Mathias forcing cannot be used to show the consistency
 of $\mathfrak{b} < \mathfrak{s}$,  
 since the generic real $a$ also has the property that its  enumeration function $e_a \from \omega \to a$ is a
dominating real over $V$, thereby witnessing $\omega^\omega \intersection V$ is no longer 
$\leq^*$-unbounded in the $\mathbb{M}$-generic extension. 
In other words, the Mathias reals grow too fast to 
preserve the unboundedness of $\omega^\omega \intersection V$.
To remedy the problem Shelah's original ``creature forcing'' $\mathbb{Q}$ (Definition \ref{def creature}),
slows the growth of the generic
by using a countable sequence of 
\emph{finite logarithmic measures} (see Definiton \ref{def logarithmic measure}) on the second coordinate of a Mathias condition, allowing for careful selection of 
the integers with which one extends a given finite approximation
so to not dominate $\omega^\omega \intersection V$. 
The $\mathbb{Q}$-generic reals are not split by any ground 
model family for similar reasons as with $\mathbb{M}$.
The fact $\mathfrak{b}= \aleph_1$ in a countable 
support iteration of $\mathbb{Q}$ over a model 
of \textsf{CH} was established in \cite{Shelah84} via 
the notion of \emph{almost }$\omega^\omega$\emph{-boundedness},  
see for example \cite{Abraham2010} for further discussion.
Shelah shows $\mathfrak{a}= \aleph_1$ also holds in the above mentioned 
extension by directly constructing a mad family in the ground model
and proving the indestructibility of this specific mad family 
under $\mathbb{Q}$ and its iterations. 
In 1995 Alan Dow \cite{dow95} constructs a mad family 
in a model of $\mathfrak{p} = \mathfrak{c}$ with 
similar properties and shows that it
is indestructible by countable support iterations
of Miller forcing, establishing $\mathfrak{a} = \aleph_1$
in the Miller model. It is interesting to observe that both Shelah's and
Dow's indestructible mad families discussed here are in fact tight, as shown in 
Propositions \ref{construction of shelahs mad family} and \ref{construction of miller indestructible}, respectively.

Since 1984 the theory of forcing preservation has considerably 
developed; see, for example, \cite{ShelahPIP} or \cite{Goldsterntools}.  Likewise has been developed the theory
of \emph{indestructibility} of various witnesses to cardinal 
characteristics, in particular that
of mad families; see \cite{BY05}, \cite{HrusakRationals}, or 
\cite{HrusakFerreira2003}. In 2020, Guzman, Hru\v{s}\'ak, 
and Tellez study the preservation of 
tight mad families, where: 

\begin{definition}\label{def of tight mad family}
An almost disjoint family $\calA \subseteq \infsubsets$ is 
\emph{tight} if for all countable collections $\calB \subseteq 
\calI(\calA)^+$, there exists a single $a \in \calA$ such that 
$a \intersection b$ is infinite for each $b \in \calB$. 
\end{definition}

Above, $\calI(\calA)^+$ denotes the complements of the sets 
belonging to the ideal generated by $\calA$ and the finite
subsets of $\omega$, 
$\calI(\calA) = \set{ b \subseteq \omega}[\exists F \in {[\calA]}^{<\omega} \ (b \subseteq^* \bigcup F)].$

Notice that if an almost disjoint family is tight, then it is 
immediately maximal. This strengthening initially appeared
in the work of Malykhin \cite{Malykhin} in 1989, under 
the name of $\omega$-mad families, and its connection 
to Cohen-indestructibility was established in 
Kurili\'c in 2001 \cite{Kurilic}.
The existence of tight mad families follows from $\mathfrak{b} = \mathfrak{c}$
and a certain parametrized $\lozenge$-principle (see 
\cite{HrusakFerreira2003}). Nonetheless it remains a long standing open question if \ZFC\ proves
the existence of tight mad families.
Recently the following preservation property was 
introduced:

\begin{definition}[{\cite[Definition 7.1]{GHT}}] \label{preserve def}
Let $\calA$ be a tight mad family. We say a proper forcing notion
$\mathbb{P}$ \emph{strongly preserves the tightness of }$\calA$ 
if for every $p \in \mathbb{P}$ and every countable elementary
$\calM \prec H_\theta$, where $\theta$ is a sufficiently large
regular cardinal so that $\mathbb{P}, \calA, p \in \calM$, 
and for every $B \in \mathcal{I}(\calA)$ such that 
$B \intersection Y$ is infinite for all $Y \in 
\mathcal{I}(\calA)^+ \intersection \calM$, 
there exists $q \leq p$ such that 
$q$ is $ (\calM , \mathbb{P})$-generic and 
$q \forces$ ``$ \forall \dot{Z} \in (
\mathcal{I}(\calA)^+ \intersection M[\dot{G}]) ( \card{\dot{Z} 
\intersection B } = \omega)$''. Such a $q$ is called 
an $(\calM, \mathbb{P}, \calA, B)$\emph{-generic condition}.
\end{definition}

It is easy to see that if $\mathbb{P}$ is proper and preserves
the tightness of some $\calA$, then $\calA$ remains tight 
in any $\mathbb{P}$-generic extension. 
Crucially:

\begin{lemma}[{\cite[Corollary 6.5]{GHT}}] \label{csi iterations strongly preserve tightness}
Let $\gamma$ be an ordinal, and let 
 $\mathbb{P} = \langle \mathbb{P}_\alpha, \dot{\mathbb{Q}}_\beta
: \alpha \leq \gamma,  \beta < \gamma \rangle$ be a countable support iteration 
such that for all $\alpha < \gamma$, $\forces_{\mathbb{P}_\alpha} 
$`` $\dot{\mathbb{Q}}_\alpha$ strongly preserves the 
tightness of $\check{\calA}$''. Then $\mathbb{P}$
strongly preserves the tightness of $\calA$.
\end{lemma}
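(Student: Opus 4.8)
The plan is to prove the statement by induction on the length of the iteration, the base case being trivial, with the two-step composition lemma doing the work at successors and a fusion argument at limits. Two facts would be used throughout: countable support iterations of proper forcings are proper, and $\calI(\calA)$ and $\calI(\calA)^+$ are computed the same way in $V$ and in any forcing extension (membership in $\calI(\calA)$ is certified by a finite subfamily of $\calA$ together with an almost-inclusion, so no ground-model set ever enters or leaves $\calI(\calA)$; what the extensions add to $\calI(\calA)^+$ are only \emph{new} reals). Hence, for a fixed $B\in\calI(\calA)\cap V$, the assertion ``$B$ is good for $\calN$'', by which I mean $|B\cap Y|=\omega$ for every $Y\in\calI(\calA)^+\cap\calN$, has an unambiguous meaning along any increasing tower of models $\calN$. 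It is worth isolating at the outset why Definition \ref{preserve def} fixes a \emph{persistent} witness $B$: this single ground-model set is what the fusion at a limit stage will diagonalize against, and a weaker ``preserves tightness'' hypothesis, in which the witness were allowed to vary with the stage, would not survive the passage to the limit.

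For the successor step, assume the statement for $\mathbb{P}_\eta$ and that $\forces_{\mathbb{P}_\eta}\,$``$\dot{\mathbb{Q}}_\eta$ strongly preserves the tightness of $\check{\calA}$''; I prove it for $\mathbb{P}_{\eta+1}=\mathbb{P}_\eta*\dot{\mathbb{Q}}_\eta$. Given $(p_0,\dot p_1)\in\mathbb{P}_{\eta+1}$, a countable $\calM\prec H_\theta$ with $\mathbb{P}_{\eta+1},\calA,(p_0,\dot p_1)\in\calM$, and $B\in\calI(\calA)$ good for $\calM$, first apply strong preservation of $\mathbb{P}_\eta$ to obtain $q_0\leq p_0$ that is $(\calM,\mathbb{P}_\eta)$-generic and forces ``$B$ is good for $\calM[\dot G_\eta]$''. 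Now work in $V[G_\eta]$ with $q_0\in G_\eta$: since $q_0$ is $(\calM,\mathbb{P}_\eta)$-generic, $\calM[G_\eta]\prec H_\theta^{V[G_\eta]}$; by the choice of $q_0$, $B$ is good for $\calM[G_\eta]$; and $\dot{\mathbb{Q}}_\eta[G_\eta]$ strongly preserves the tightness of $\calA$. Applying the latter to $\calM[G_\eta]$, to $\dot p_1[G_\eta]$, and to the witness $B$ yields a $\mathbb{P}_\eta$-name $\dot q_1$ which $q_0$ forces to lie below $\dot p_1$, to be $(\calM[\dot G_\eta],\dot{\mathbb{Q}}_\eta)$-generic, and to force ``$B$ is good for $\calM[\dot G_\eta][\dot H]$''. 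The standard composition facts for generic conditions then give that $(q_0,\dot q_1)$ is $(\calM,\mathbb{P}_{\eta+1})$-generic, and since $\calM[\dot G_{\eta+1}]=\calM[\dot G_\eta][\dot H]$ it is the desired $(\calM,\mathbb{P}_{\eta+1},\calA,B)$-generic condition.

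For the limit step, let $\gamma$ be a limit. If $\cf(\gamma)>\omega$ then $\calM\cap\gamma$ is bounded below some $\gamma'<\gamma$, and the usual reflection argument from the preservation-of-properness theorem applies: every $\mathbb{P}_\gamma$-name in $\calM$ for a subset of $\omega$ is, on a dense subset of $\mathbb{P}_\gamma$ lying in $\calM$, a $\mathbb{P}_{\gamma'}$-name, so the inductive hypothesis at $\gamma'$ produces $q'\leq p\restriction\gamma'$ that is $(\calM,\mathbb{P}_{\gamma'},\calA,B)$-generic, and the condition $q\in\mathbb{P}_\gamma$ with $q\restriction\gamma'=q'$ and $q\restriction[\gamma',\gamma)=p\restriction[\gamma',\gamma)$ is $(\calM,\mathbb{P}_\gamma)$-generic and forces $B$ good for $\calM[\dot G_\gamma]$ (all of whose reals already lie in $\calM[\dot G_{\gamma'}]$). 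The case $\cf(\gamma)=\omega$ is the technical core and is handled by the fusion that is standard for countable-support-iteration preservation theorems of ``P-point type'' (cf.\ the proof in \cite{GHT}). Fix $\langle\gamma_n:n<\omega\rangle\in\calM$ increasing and cofinal in $\gamma$, noting that each interval $[\gamma_n,\gamma_{n+1})$ has order type $<\gamma$, so the inductive hypothesis applies to the chunks $\mathbb{P}_{\gamma_{n+1}}/\mathbb{P}_{\gamma_n}$. One constructs a fusion sequence of conditions whose restrictions to each $\gamma_n$ converge to a condition $q\in\mathbb{P}_\gamma$, maintaining at every $n$ that $q\restriction\gamma_n$ is $(\calM,\mathbb{P}_{\gamma_n},\calA,B)$-generic (inductive hypothesis at $\gamma_n$) and that the $n$-th dense open subset of $\mathbb{P}_\gamma$ in $\calM$ has been met — via the standard reflection of a dense set of $\mathbb{P}_\gamma$ to one dense below $q\restriction\gamma_n$ — which together make $q$ be $(\calM,\mathbb{P}_\gamma)$-generic. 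On top of this one interleaves, for each name $\dot Z\in\calM$ forced into $\calI(\calA)^+$ and each $m<\omega$, a step ensuring that $q$ forces ``$\exists\ell>m\,(\ell\in\dot Z\cap B)$'': this is where strong preservation of a single iterand, combined with the inductive hypothesis applied to a bounded initial chunk of the remaining iteration inside the intermediate extension $V[G_{\gamma_n}]$ (in which $B$ is good for $\calM[G_{\gamma_n}]$ by the maintained invariant), is used to produce the required extension, folded back in without disturbing the coordinates below $\gamma_n$. Passing to the limit, $q$ is $(\calM,\mathbb{P}_\gamma)$-generic and forces every such $\dot Z$ to meet $B$ infinitely; since every element of $\calI(\calA)^+\cap\calM[\dot G_\gamma]$ is, below a condition of $\calM$, represented by one of these names, $q$ forces $B$ to be good for $\calM[\dot G_\gamma]$, i.e.\ $q$ is $(\calM,\mathbb{P}_\gamma,\calA,B)$-generic, which closes the induction.

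The main obstacle is precisely this last fusion: a name for an $\calA$-positive set in the final extension need not be ``localized'' to any bounded stage, so such sets cannot be caught coordinate by coordinate; the remedy is to treat each pair (name, bound) by a single step that descends to an intermediate model, invokes the inductive hypothesis on a bounded piece of the remaining iteration together with single-step strong preservation, and comes back up, all the while leaving the already-committed coordinates frozen and keeping the one fixed ground-model witness $B$. Making the bookkeeping of these repeated descents and returns compatible with the convergence of the fusion is the delicate point, but — given the successor step and the inductive hypotheses at the $\gamma_n$ — it is the routine-but-careful argument familiar from the proper-forcing preservation literature.
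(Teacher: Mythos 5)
Your proposal is correct and is essentially the argument the paper points to: the paper does not prove this lemma itself but cites \cite[Corollary 6.5]{GHT} and remarks that the proof ``follows the lines of the preservation of properness under countable support iterations,'' which is precisely the induction you carry out (two-step composition at successors, reflection for uncountable cofinality, and a fusion interleaving the pairs $(\dot Z, m)$ against the single ground-model witness $B$ at limits of countable cofinality). Your observation that the fixed persistent witness $B$ is exactly what makes the limit fusion go through is the right way to see why the definition is formulated as it is.
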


The proof of the above follows the lines of the preservation of properness
under countable support iterations (see \cite{Abraham2010};
\cite{Shelahproperforcing}).
Examples of posets strongly preserving tightness are 
Miller forcing, Miller partition forcing, and Sacks forcing. 
Our  Theorem \ref{Prop Q strongly preserves tightness} adds  
Shelah's original creature forcing to this list, thus
providing an alternative proof of \cite[Theorem 3.1]{Shelah84}
(see Theorem \ref{Theorem a < s}).



An already recurring approach in establishing that a proper forcing 
strongly preserves tightness as in Definition \ref{preserve def}
is to appeal to an
 \emph{outer hull argument}.
  
\begin{definition}
If $\mathbb{P}$ is a forcing notion, $p \in \mathbb{P}$ and 
$\dot{Z}$ is a $\mathbb{P}$-name for a subset of $\omega$, 
the \emph{outer hull of }$\dot{Z}$\emph{ with respect to 
p} is the set 
$W_p:= \set{m \in \omega}[\exists r \leq p \ (r \forces 
m \in \dot{Z})].$

\end{definition}

\begin{fact} [{\cite[Lemma 6.2]{GHT}}] 
\label{outer hull in filter fact}
For an almost disjoint
 family $\calA$ and $\mathbb{P}$ a forcing notion, if $\dot{Z}$ is a $\mathbb{P}$-name for an element of 
$\calI(\calA)^+$, then for any $p \in \mathbb{P}$, 
$W_p \in \calI(\calA)^+$. 
\end{fact}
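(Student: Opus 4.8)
The plan is to unwind the definitions and run a one-step density argument. Recall that $W_p \in \calI(\calA)^+$ means exactly that $W_p$ is not almost-covered by any finite union of members of $\calA$, i.e. that for every $F \in [\calA]^{<\omega}$ the set $W_p \setminus \bigcup F$ is infinite. So it suffices to fix an arbitrary finite $F \subseteq \calA$ and an arbitrary $k \in \omega$ and exhibit some $m \geq k$ with $m \in W_p$ and $m \notin \bigcup F$.

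First I would record the small absoluteness point that makes everything go through: finite subsets of the ground-model set $\calA$, and hence the sets $\bigcup F$ for $F \in [\calA]^{<\omega}$, already lie in $V$ and are computed identically in $V$ and in any $\mathbb{P}$-generic extension. Consequently the assertion ``$\dot Z \in \calI(\calA)^+$'' is the statement that for every such $F$, $\dot Z \setminus \bigcup F$ is infinite, and each ``$m \notin \bigcup F$'' is a statement about a fixed ground-model set, decided outright. Since $\dot Z$ is a name for an element of $\calI(\calA)^+$, we may assume $p \forces$ ``$\dot Z \in \calI(\calA)^+$'' (replacing $p$ by a condition below one forcing this if necessary).

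Now fix $F \in [\calA]^{<\omega}$ and $k \in \omega$. Since $p \forces$ ``$\dot Z \setminus \bigcup F$ is infinite'', in particular $p \forces$ ``$\exists m\,(m \geq k \wedge m \in \dot Z \wedge m \notin \bigcup F)$''. By the usual forcing facts there are $r \leq p$ and an integer $m \geq k$ with $r \forces$ ``$m \in \dot Z$'' and $m \notin \bigcup F$. Then $m \in W_p$ by definition of the outer hull, $m \geq k$, and $m \notin \bigcup F$. As $k$ was arbitrary, $W_p \setminus \bigcup F$ is infinite; as $F$ was arbitrary, $W_p \notin \calI(\calA)$, that is, $W_p \in \calI(\calA)^+$, as required.

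I do not expect a genuine obstacle here: the argument is a routine genericity calculation, and the only thing demanding care is the absoluteness observation above, which is what permits reading the forced property of $\dot Z$ off a single condition $r \leq p$. It is worth flagging, though, that the intended hypothesis really is that $p$ (or $\mathbbm{1}$) forces $\dot Z \in \calI(\calA)^+$: if one only knew this were forced by some $q$ incompatible with $p$, the conclusion could fail, since $W_p$ only sees conditions below $p$.
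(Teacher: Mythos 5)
Your argument is correct and is essentially the expansion of the paper's one-line justification: the paper simply notes that $p$ forces $\dot Z\subseteq W_p$ (so $W_p$ inherits positivity from $\dot Z$ by absoluteness of ``$W_p\setminus\bigcup F$ is infinite'' for ground-model data), and your density computation producing, for each finite $F\subseteq\calA$ and each $k$, some $m\geq k$ in $W_p\setminus\bigcup F$ is exactly how one unwinds that remark. No gap; the closing caveat about the hypothesis being forced below $p$ is a fair observation and does not affect the proof.
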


The above follows from noting that $\dot{Z}$ will be forced
by $p$ to be a subset of $W_p$.
Our main strategy  for establishing strong preservation 
of tightness  will be to
consider a \emph{refined notion} of the outer hull and prove an analogue of Fact \ref{outer hull in filter fact} for these
refinements, see Claim \ref{claim outer hull creature} and Lemma \ref{pure outer hull}.

We proceed with Shelah's and Dow's constructions of tight 
mad families mentioned earlier. 
The following can be found in the proof of
 \cite[Theorem 3.1]{Shelah84} or
 \cite[Theorem 7.1, Chapter VI]{ShelahPIP}.
\begin{proposition}
\label{construction of shelahs mad family}
(\CH) There exists a tight mad family. 
\end{proposition}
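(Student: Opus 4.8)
The plan is to carry out the standard transfinite construction of a mad family under \CH, but with extra bookkeeping to make it \emph{tight}. Fix an enumeration $\seq{b_\alpha}{\alpha < \om_1}$ of $\infsubsets$ and, using \CH\ again, fix an enumeration $\seq{\calB_\alpha}{\alpha < \om_1}$ of all countable subsets of $\Power(\om)$ (this will capture, as it were, all potential countable subfamilies of $\calI(\calA)^+$). We build an increasing sequence $\seq{a_\alpha}{\alpha < \om_1}$ of infinite subsets of $\om$, pairwise almost disjoint, and set $\calA = \set{a_\alpha}[\alpha < \om_1]$. At each stage we must do two things: (i) \emph{maximality} --- guarantee that $b_\alpha$ is either already covered by finitely many $a_\beta$, $\beta < \alpha$, or has infinite intersection with some $a_\beta$; (ii) \emph{tightness} --- for each countable $\calB_\alpha$, if every $b \in \calB_\alpha$ is in $\calI(\set{a_\beta}[\beta<\alpha])^+$, add a single new almost disjoint set meeting all of them infinitely.

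First I would handle step (i) in the usual way: if $b_\alpha$ already has infinite intersection with some $a_\beta$, $\beta < \alpha$, do nothing; otherwise $b_\alpha \setminus \bigcup F$ is infinite for every finite $F \subseteq \set{a_\beta}[\beta < \alpha]$, and since $\alpha$ is countable we can diagonalize (enumerate $\set{a_\beta}[\beta<\alpha]$ as $\seq{c_n}{n<\om}$ and pick $a_\alpha \subseteq b_\alpha$ with $a_\alpha \setminus (c_0 \cup \dots \cup c_n)$ infinite for each $n$, hence $a_\alpha \cap c_n$ finite for all $n$) to obtain an infinite $a_\alpha \subseteq b_\alpha$ almost disjoint from all previous sets. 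The main work is step (ii). Suppose $\calB = \calB_\alpha = \set{d_n}[n < \om]$ and each $d_n$ is in $\calI(\set{a_\beta}[\beta < \alpha])^+$, i.e.\ $d_n \setminus \bigcup F$ is infinite for every finite $F \subseteq \set{a_\beta}[\beta<\alpha]$. Again enumerate the prior sets as $\seq{c_k}{k<\om}$. I would construct $a$ by choosing, recursively, for each $n$ a point $x_n \in d_n$ with $x_n \notin c_0 \cup \dots \cup c_n$ and $x_n$ larger than all previously chosen points, and set $a = \set{x_n}[n<\om]$. Then $a \cap c_k \subseteq \set{x_0,\dots,x_{k-1}}$ is finite for every $k$, so $a$ is almost disjoint from every $a_\beta$, $\beta < \alpha$; and $a \cap d_n \ni x_n$ --- in fact, to get $a \cap d_n$ \emph{infinite} rather than merely nonempty, I would interleave the requirements so that each $d_n$ is ``visited'' infinitely often, choosing infinitely many of the $x_m$'s inside $d_n$ (which is possible since $d_n$ itself, minus the finitely many excluded $c_k$'s relevant at each step, stays infinite). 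Adding this $a$ as a new member $a_\alpha$ then witnesses the instance of tightness corresponding to $\calB_\alpha$.

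At the end of the construction $\calA$ is almost disjoint by design, mad because step (i) was met at every stage and every $b \in \infsubsets$ appears as some $b_\alpha$, and tight because every countable $\calB \subseteq \calI(\calA)^+$ appears as some $\calB_\alpha$ --- and at stage $\alpha$ its members, being in $\calI(\calA)^+$, are a fortiori in $\calI(\set{a_\beta}[\beta<\alpha])^+$, so step (ii) provided the required single $a \in \calA$. The one point requiring care, and the place where a naive argument can go wrong, is precisely this last ``a fortiori'': one must check that adding later sets $a_\gamma$, $\gamma > \alpha$, cannot push some $d_n \in \calB_\alpha$ out of $\calI(\calA)^+$ in a way that retroactively invalidates the bookkeeping. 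But this is not an issue: the tightness requirement only asks that \emph{if} $\calB \subseteq \calI(\calA)^+$ (the \emph{final} positive ideal), \emph{then} some $a \in \calA$ meets all of $\calB$; and $\calI(\calA)^+ \subseteq \calI(\set{a_\beta}[\beta<\alpha])^+$ for every $\alpha$, so the hypothesis at stage $\alpha$ is weaker than what we need and the construction delivers the witness regardless. Thus the genuine obstacle is merely the interleaving in step (ii) to secure \emph{infinite} intersections with all countably many $d_n$ simultaneously while keeping almost disjointness from the countably many $c_k$ --- a standard but slightly delicate double diagonalization.
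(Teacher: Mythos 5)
Your construction is correct, and it reaches the conclusion by a genuinely different (and more elementary) route than the paper. The paper follows Shelah's original construction: it enumerates all sequences $\langle B_n^\beta : n\in\omega\rangle$ of pairwise disjoint nonempty \emph{finite} sets and, at stage $\alpha$, absorbs infinitely many blocks of each still-positive sequence into $A_\alpha$ while maintaining, as an inductive invariant (clause (2) of the paper's proof), that positivity of these block sequences is preserved at all later stages. You instead enumerate the countable families $\calB_\alpha$ of infinite sets directly (legitimate under \CH, since $|[\mathcal{P}(\omega)]^{\leq\omega}|=\aleph_1$) and handle each one in a single shot by a double diagonalization, producing one new set almost disjoint from the countably many predecessors and meeting every member of $\calB_\alpha$ infinitely. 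Your key observation --- that $\calI(\calA)^+\subseteq\calI(\{a_\beta\}_{\beta<\alpha})^+$, so the stage-$\alpha$ positivity hypothesis is automatically implied by the final tightness hypothesis and no retroactive invalidation is possible --- is exactly the point that makes the simpler bookkeeping sound. What the finite-block formulation buys is not needed for this proposition as used here: it is the shape of the family that interfaces with Shelah's original indestructibility argument, whereas the present paper only invokes the proposition to obtain \emph{some} tight mad family and then applies Theorem \ref{Prop Q strongly preserves tightness} to an arbitrary one, so your construction would serve equally well. Two small repairs: in your maximality step, ``$a_\alpha\setminus(c_0\cup\dots\cup c_n)$ infinite for each $n$, hence $a_\alpha\cap c_n$ finite'' is a non sequitur as phrased --- you need the same recursion as in step (ii), choosing the $n$-th point of $a_\alpha$ outside $c_0\cup\dots\cup c_n$, which puts $a_\alpha\cap c_n$ inside the first $n$ points chosen; and you should seed the construction with a partition of $\omega$ into infinitely many infinite sets, as the paper does, to guarantee the resulting family is infinite rather than a finite partition of $\omega$ (which would be vacuously tight but is excluded by the paper's standing convention).
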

\begin{proof} 
Fix an enumeration $\set{ \langle B_n^\alpha : n \in \omega \rangle}[\alpha < 
\omega_1]$ of sequences $\langle B_n^\alpha : n \in \omega 
\rangle$, with $B_n^\alpha \in [\omega]^{<\omega} \setminus 
\set{\emptyset}$ and $B_n^\alpha \intersection B_m^\alpha = 
\emptyset$ for all distinct $m,n \in \omega$. 
By induction on $\alpha < \omega_1$, recursively define a
family $\calA = \set{A_\alpha}[\alpha < \omega_1]$ as follows. 
First, choose a partition $\set{A_n}[n \in \omega]$ of 
$\omega$ into infinite sets. 
For $\alpha \in [\omega, \omega_1)$, 
choose $A_\alpha \subseteq \omega$ so that $A_\alpha$ is 
almost disjoint from $A_\beta$ for each $\beta < \alpha$ and
for each $\beta < \alpha$,
\underline{if} for all $k \in \omega$ and $\alpha_j < \alpha$ for 
$j < k$, for all $m \in \omega$
the set $\set{n \in \omega}[\min(B_n^\beta) > m \wedge \ 
B_n^\beta \intersection \bigcup_{j < k}A_{\alpha_j} = 
\emptyset]$ is infinite, 
\underline{then:} 
\begin{enumerate}
\item there exist infinitely many $n \in \omega$
such that $B_n^\beta \subseteq A_\alpha$, and
\item for all $k \in \omega$ and $\alpha_j \leq \alpha$ 
for $j < k$, 
the set $\set{n \in \omega}[B_n^\beta \intersection 
(\bigcup_{j < k}A_{\alpha_j}) 
= \emptyset]$ is infinite. 
\end{enumerate}

Let $\calA = \set{A_\alpha}[\alpha < 
\omega_1]$. It is straightforward to check that 
$\calA$ is a tight mad family.
\end{proof}


\begin{proposition}[{\cite[Lemma 2.3]{dow95}}] \label{construction of miller indestructible}
Let $\mathfrak{p} = \mathfrak{c}$. Then, 
there is a tight mad family $\calA = \set{A_\alpha}[\alpha < \mathfrak{c}]$ such that for any fixed enumeration
$\set{\langle B_n^\alpha : n \in \omega \rangle }[\alpha
 < \mathfrak{c}]$
of $[[\omega]^{<\omega}]^\omega$, for any $\alpha < \mathfrak{c}$,
\underline{if} there exists $\beta < \alpha$ such that
 \begin{center}$(\ast): $ for all $I \in \mathcal{I}(\bigcup_{\beta < \alpha} A_\beta)$, 
$B_n^\beta \intersection I = \emptyset$ for infinitely many
$n \in \omega$, \end{center}

\noindent
\underline{then} there are infinitely many $n$ such that 
$B_n^\beta \subseteq A_\alpha$. 
\end{proposition}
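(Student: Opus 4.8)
The plan is to run a recursive construction of length $\mathfrak{c}$ exactly as in the proof of Proposition \ref{construction of shelahs mad family}, but organized around a bookkeeping device supplied by $\mathfrak{p} = \mathfrak{c}$ rather than by \CH. First I would fix the enumeration $\set{\langle B_n^\alpha : n \in \omega\rangle}[\alpha<\mathfrak{c}]$ of $[[\omega]^{<\omega}]^\omega$ and also fix an enumeration $\set{E_\alpha}[\alpha<\mathfrak{c}]$ of $\infsubsets$ so that every real is listed cofinally often; these will be the ``targets'' against which maximality is ensured. I would build $\calA = \set{A_\alpha}[\alpha<\mathfrak{c}]$ by recursion, starting with a partition $\set{A_n}[n\in\omega]$ of $\omega$ into infinite sets, and at stage $\alpha\in[\omega,\mathfrak{c})$ choose $A_\alpha$ almost disjoint from all $A_\beta$, $\beta<\alpha$, satisfying two requirements: (i) if there is some $\beta<\alpha$ for which the condition $(\ast)$ holds, then $B_n^\beta\subseteq A_\alpha$ for infinitely many $n$ — here we only need to handle \emph{one} such $\beta$, which is what makes the statement weaker than a full diagonalization; and (ii) if $E_\alpha$ is almost disjoint from $A_\beta$ for all $\beta<\alpha$, then $A_\alpha\cap E_\alpha$ is infinite (so the family ends up mad), and moreover for tightness we also want to meet countably many positive sets at once as in Definition \ref{def of tight mad family}.

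The heart of the matter is the successor step: given the countably-generated ideal $\calI\big(\set{A_\beta}[\beta<\alpha]\big)$ — which is $\sigma$-generated since $\alpha<\mathfrak{c}$ is not relevant, rather one works with finite unions — together with a countable collection of sets we wish to be infinitely met, find a single almost disjoint $A_\alpha$ doing all the work. This is precisely where $\mathfrak{p} = \mathfrak{c}$ enters: the relevant family of ``conditions'' (finite approximations to $A_\alpha$, ordered so that a tower through them produces an infinite set meeting each required positive set and missing each required piece of the ideal infinitely often) has size $<\mathfrak{c} = \mathfrak{p}$ and the strong finite intersection property, so a pseudo-intersection — equivalently, a suitable $A_\alpha$ — exists. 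Concretely, one checks that under hypothesis $(\ast)$ for a fixed $\beta$, for every finite $F\subseteq[\alpha]^{<\omega}$ and every $m$ there are infinitely many $n$ with $\min(B_n^\beta)>m$ and $B_n^\beta\cap\bigcup_{\gamma\in F}A_\gamma=\emptyset$, so one can greedily absorb infinitely many such blocks $B_n^\beta$ into $A_\alpha$ while keeping $A_\alpha$ almost disjoint from each earlier $A_\beta$; the $\mathfrak{p}=\mathfrak{c}$ argument packages this greedy process so that the countably many ``meet $E$ infinitely'' demands are satisfied simultaneously.

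I expect the main obstacle to be the bookkeeping that reconciles the three competing demands at stage $\alpha$ — almost disjointness from all earlier sets, absorbing infinitely many blocks $B_n^\beta$, and meeting the prescribed countable family of $\calI(\calA)^+$-sets infinitely — in a single pseudo-intersection argument, and verifying that the resulting $\calA$ is genuinely \emph{tight} (not merely mad). For tightness one must argue that any countable $\calB\subseteq\calI(\calA)^+$ appears, at some stage $\alpha$, as (essentially) a list of the sets we committed to meet, which requires the enumeration of potential positive sets to be set up with enough foresight; this is routine given $\mathrm{cf}(\mathfrak{c})>\omega$ and the fact that every countable subset of $\mathfrak{c}$ is bounded, but it is the step that needs care. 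The verification that $\calA$ is maximal, and that $(\ast)\Rightarrow B_n^\beta\subseteq A_\alpha$ for infinitely many $n$, then falls out directly from the construction, mirroring the final sentence of the proof of Proposition \ref{construction of shelahs mad family}.
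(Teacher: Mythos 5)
Your overall strategy---a transfinite recursion of length $\mathfrak{c}$ in which each $A_\alpha$ is produced by a pseudo-intersection argument from $\mathfrak{p}=\mathfrak{c}$---is the same as the paper's. The paper realizes it by taking the family $\mathcal{F}=\{F_s : s\colon\omega\to\alpha \text{ a finite partial function}\}$ of subsets of $[[\omega]^{<\omega}]^{\omega}$, where $x\in F_s$ iff $x$ avoids $\bigcup_{i\in\mathrm{dom}(s)}A_{s(i)}$ and contains some $B^{s(i)}_m$ with $m>\max\mathrm{dom}(s)$ for each $i\in\mathrm{dom}(s)$; since $\mathcal{F}$ has the SFIP and $|\mathcal{F}|\leq|\alpha^{<\omega}|<\mathfrak{p}$, a pseudo-intersection $Y\subseteq[\omega]^{<\omega}$ exists and one sets $A_\alpha=\bigcup Y$.

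There is, however, a genuine gap in your plan: you propose to handle only \emph{one} $\beta<\alpha$ satisfying $(\ast)$ at stage $\alpha$, calling this ``what makes the statement weaker than a full diagonalization.'' The proposition (and its intended use) requires that for \emph{every} $\beta<\alpha$ satisfying $(\ast)$, the set $A_\alpha$ absorbs infinitely many blocks $B_n^\beta$; this is exactly why the filter base is indexed by finite partial functions $s\colon\omega\to\alpha$ rather than by a single ordinal, and it is what the cardinality bound $|\alpha^{<\omega}|<\mathfrak{p}$ is paying for. With one $\beta$ per stage, your $A_\alpha$ fails the stated conclusion at every $\alpha$ at which more than one $\beta$ is $(\ast)$-good, which is the typical situation. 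Relatedly, your plan to also ``meet the prescribed countable family of $\mathcal{I}(\mathcal{A})^+$-sets'' at stage $\alpha$ is circular: positivity is with respect to the final family $\mathcal{A}$, which is not available at stage $\alpha$. The paper's organization avoids both issues: the only commitment made at stage $\alpha$ is block absorption for all $(\ast)$-good $\beta<\alpha$ simultaneously, and tightness (hence maximality, so your auxiliary enumeration $\{E_\alpha\}$ of $[\omega]^{\omega}$ is unnecessary) is verified a posteriori by coding an arbitrary countable positive family into a block sequence $\langle B_n^\beta\rangle$ and invoking the absorption property at a suitable later stage.
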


\begin{proof} 
We define $\calA$ inductively by first letting
 $\set{A_n}[n \in \omega]$ be any partition of 
$\omega$ into infinite sets. Suppose $\{A_\beta\}_{\beta<\alpha}$ have been constructed 
and let 
$\mathcal{I}_\alpha$ denote the ideal
$\mathcal{I}(\bigcup_{\beta < \alpha}A_\beta)$. 
We can assume without loss of generality that $(\ast)$ holds for 
all $\beta < \alpha$.
Next, for each finite partial function 
$s \from \omega \to \alpha$,  define 
$F_s$ to be the set of all 
$x \in [\omega]^{<\omega}$ such that 
$x \intersection \bigcup_{i \in \mathrm{dom}(s)}A_{s(i)} = \emptyset$, and 
 $\forall i \in \mathrm{dom}(s) \ \exists m > \max \mathrm{dom}(s) 
\ (B_m^{s(i)} \subseteq x)$. Then $\calF = \set{ F_s}[ s\from \omega \to \alpha, \text{ s is a finite partial function}] \subseteq [[\omega]^{<\omega}]^\omega$ has the SFIP and is of cardinality 
$\card{\alpha^{<\omega}} < \mathfrak{c} = \mathfrak{p}$,
so there 
exists an infinite $Y \subseteq [\omega]^{< \omega}$ 
such that $Y \subseteq^* F$ for each $F \in \calF$. 
Let $A_\alpha = \bigcup Y$. To see $A_\alpha$ is
as desired,
let $\beta < \alpha$, and let $s \from \omega \to \alpha$ 
be such that $s(0) = \beta$. Then $A_\beta \intersection 
A_\alpha$ is contained in $Y \setminus F_s$, so as the 
latter is finite, so is the former. 

Moreover the set $\set{n \in \omega}[B_n^\beta \subseteq 
A_\alpha]$ is infinite. Note that 
as $Y \intersection F_s$ is infinite,
it is in particular nonempty, 
and if $y \in Y \intersection F_s$ then there is $m > \max \mathrm{dom}(s)$ with 
$B_m^{s(0)} = B_m^\beta \subseteq y \subseteq A_\alpha$. 
Because $\max \mathrm{dom}(s)$ can be taken to be any
$n \in \omega$, as we only require $s(0) = \beta$, 
the integer $m$ above can be taken arbitrarily large. 
It is straightforward to check that $\calA$ is a tight mad family.
\end{proof}

Next we show that $\mathbb{Q}$ strongly preserves tightness. 
Our presentation of the poset follows Abraham's \cite{Abraham2010} (see also \cite{Fischerthesis}).


\begin{definition} \label{def logarithmic measure}
For $s$ a subset of $\omega$, a 
\emph{logarithmic measure on }$s$ is a function  
$h \from [s]^{< \omega} \to \omega$
such that for all $A,B \in [s]^{< \omega}$ and
$\ell \in \omega$, if $h(A \union B) \geq \ell + 1$, 
then either $h(A) \geq \ell$ or $h(B) \geq \ell$. 
A \emph{finite logarithmic measure} is a pair 
$(s,h)$ such that $s \subseteq \omega$ is finite and
$h$ is a logarithmic measure on $s$. 
\end{definition}

\begin{lemma}[{\cite{Shelah84};
\cite[Lemma 2.1.3]{Fischerthesis}}] \label{measures in one partition piece lemma}
If $h$ is a logarithmic measure on $s$ and
$h(A_0 \union \dots \union 
A_{n-1}) > \ell$, then there exists $j < n$ such that
$h(A_j) \geq \ell - j$. 
\end{lemma}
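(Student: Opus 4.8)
The plan is to prove this by induction on $n$, phrasing the statement so that the inductive hypothesis is quantified over all thresholds $\ell$ simultaneously: for each $n \geq 1$, for every logarithmic measure $h$ on a set $s$, all $A_0, \dots, A_{n-1} \in [s]^{<\omega}$, and every $\ell$, if $h(A_0 \cup \dots \cup A_{n-1}) > \ell$ then there is $j < n$ with $h(A_j) \geq \ell - j$. Observe first that if $\ell \leq 0$ the conclusion is immediate: take $j = 0$, since $h$ takes values in $\omega$ and hence $h(A_0) \geq 0 \geq \ell = \ell - 0$. So in what follows one may assume $\ell \geq 1$.

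For the base case $n = 1$, the hypothesis $h(A_0) > \ell$ gives $h(A_0) \geq \ell = \ell - 0$, as desired. For the inductive step, assume the claim for $n$ and suppose $h(A_0 \cup \dots \cup A_n) > \ell$ with $\ell \geq 1$. Writing $B = A_1 \cup \dots \cup A_n$, we have $h(A_0 \cup B) \geq \ell + 1$, so the defining property of a logarithmic measure (Definition \ref{def logarithmic measure}, applied with its parameter equal to $\ell$) yields $h(A_0) \geq \ell$ or $h(B) \geq \ell$. In the first case we are done with $j = 0$. In the second case $h(A_1 \cup \dots \cup A_n) \geq \ell > \ell - 1$, and since $\ell - 1 \in \omega$ the inductive hypothesis applies to the $n$ sets $A_1, \dots, A_n$ with threshold $\ell - 1$: there is $i$ with $0 \leq i < n$ and $h(A_{1+i}) \geq (\ell - 1) - i$. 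Setting $j = 1 + i$ gives $1 \leq j \leq n$ and $h(A_j) \geq (\ell - 1) - (j - 1) = \ell - j$, which completes the induction.

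There is essentially no real obstacle here; the only points requiring care are the bookkeeping of the index shift in the second case and the observation that the conclusion is vacuous once $\ell$ is small enough, which is exactly what keeps the auxiliary threshold $\ell - 1$ inside $\omega$ whenever it is used. An alternative is to argue by contraposition: from $h(A_j) < \ell - j$ for all $j < n$ one derives $h(A_0 \cup \dots \cup A_{n-1}) \leq \ell$ by iterating the estimate $h(X \cup Y) \leq \max(h(X), h(Y)) + 1$ (itself an immediate consequence of the definition, taking the parameter to be $\max(h(X),h(Y)) + 1$). This works as well, but the bookkeeping of the resulting $\max$-expression is slightly more awkward than the direct induction above, so I would present the inductive argument.
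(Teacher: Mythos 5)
Your induction is correct: the base case, the handling of $\ell \leq 0$, the single application of the defining property of a logarithmic measure with parameter $\ell$, and the index shift $j = 1+i$ in the second case all check out. The paper itself gives no proof of this lemma (it is cited from Shelah and from Fischer's thesis), but the argument in those sources is exactly this induction on $n$, so your proposal matches the intended proof.
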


The \emph{level} of a finite logarithmic measure $(s,h)$ is the 
value $h(s)$ and is denoted $\mathrm{level}(h)$. If $A \subset s$ such that $h(A)>0$, 
then $A$ is called $h$-\emph{positive}.

\begin{definition}
Let $P \subseteq [\omega]^{< \omega}$ be an upwards closed collection.
The logarithmic measure $h$ on $[\omega]^{<\omega}$ 
\emph{induced by }$P$ is 
defined inductively on the cardinality of  
$s \in [\omega]^{< \omega}$ as follows: 
\begin{enumerate}
\item $h(e) \geq 0$ for all $e \in [\omega]^{< \omega}$; 
\item $h(e) > 0$ if and only if $e \in P$; 
\item For all $\ell \geq 1$, $h(e) \geq \ell + 1$ if and only
if $\card{e} > 1$ and for all $e_0, e_1 \subseteq e$ 
such that $e = e_0 \union e_1$, 
then $h(e_0) \geq \ell$ or 
$h(e_1) \geq \ell$. 
\end{enumerate}
Then $h(e) = \ell$ if and only if $\ell \in \omega$
is maximal such that $h(e) \geq \ell$. 
\end{definition}


If $h$ is as above and $e$ is such that $h(e) \geq \ell$, 
then $h(a) \geq \ell$ for all sets $a \supseteq e$. 
In the following we will always assume an induced logarithmic 
measure is \emph{non-atomic}, meaning there are
 no $h$-positive singletons. This assumption is necessary for the
 proof of the next lemma, which 
 gives a sufficient condition for an induced 
logarithmic measure to take arbitrarily high values, and
so allowing for the construction of pure extensions with desired properties. The following can be shown by a K\"onig's lemma
argument. 

\begin{lemma}[{\cite[Lemma 4.7]{Abraham2010}}, 
{\cite[Lemma 2.1.9]{Fischerthesis}}]
\label{highvalues}
Let $P \subseteq [\omega]^{<\omega}$ be an upwards closed collection
of nonempty sets, and let $h$ be the induced logarithmic 
measure. Suppose that: 
\begin{center} 
$(\dagger) \ $ for every $n \in \omega$ and 
every  partition 
$\omega = A_0 \union \dots \union A_{n-1}$, 
there is $ i < n$ such that $[A_i]^{< \omega}$ 
contains some $x \in P$.
\end{center} 
Then for every $n,k \in \omega$, and finite partition of
$\omega$ into sets $A_0 \union \dots \union A_{n-1}$, there exists 
$i < n $ and $x \subseteq A_i$ such that $h(x) \geq k$.  
\end{lemma}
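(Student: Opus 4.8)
The plan is to argue by contradiction, reducing everything to the partition-regularity hypothesis $(\dagger)$. Call a set $C \subseteq \omega$ \emph{$\ell$-small} if $h(y) < \ell$ for every finite $y \subseteq C$. Since $h(y) \le h(y')$ whenever $y \subseteq y'$ (the monotonicity noted right after the definition of the induced measure), for a \emph{finite} set $y$ being $\ell$-small is the same as $h(y) < \ell$, and every subset of an $\ell$-small set is $\ell$-small. Observe that a $1$-small set is exactly one no finite subset of which belongs to $P$, and that --- by non-atomicity --- every set of size $\le 1$ is $\ell$-small for each $\ell \ge 1$; this last point is precisely where the non-atomicity assumption enters. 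Now fix a finite partition $\omega = A_0 \cup \dots \cup A_{n-1}$ and $k \ge 1$ (for $k = 0$ take $x = \emptyset$), and assume toward a contradiction that no $A_i$ has a finite subset of $h$-value $\ge k$, i.e.\ every $A_i$ is $k$-small. The goal is to cut the $A_i$ into $1$-small pieces, obtaining a finite partition of $\omega$ into blocks containing no member of $P$ among their finite subsets, contradicting $(\dagger)$.

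The heart of the matter is the claim that, for every $\ell \ge 1$, an $\ell$-small set is a union of finitely many $1$-small sets; for this it is enough to show that each $(\ell+1)$-small set is the union of two $\ell$-small sets. I would do the finite case first: if $e$ is finite and $(\ell+1)$-small, then $h(e) < \ell+1$, so either $|e| \le 1$, in which case $e$ (equivalently $e \cup \emptyset$) is already a union of two $\ell$-small sets, or $|e| \ge 2$, in which case the clause defining $h(e) \ge \ell+1$ must fail, producing a covering $e = e_0 \cup e_1$ with $h(e_0), h(e_1) < \ell$; discarding $e_0 \cap e_1$ from $e_1$ we may take this to be a partition into two $\ell$-small sets.

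To pass to an arbitrary (we may assume infinite) $(\ell+1)$-small $A$ I would invoke K\"onig's lemma. Enumerate $A = \{a_m : m < \omega\}$ and let $T$ consist of all $2$-colorings $c \colon \{a_0,\dots,a_{m-1}\} \to 2$ whose two color classes are both $\ell$-small, ordered by restriction. Then $T$ is a tree with finite levels, and each level is nonempty by the finite case applied to the initial segment $\{a_0,\dots,a_{m-1}\}$ of $A$ (itself $(\ell+1)$-small). K\"onig's lemma yields an infinite branch, whose union is a $2$-coloring of all of $A$; since any finite subset of a color class lies, for some $m$, inside a color class of the level-$m$ node on the branch, that finite set has $h$-value $< \ell$, so both global color classes are $\ell$-small. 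Iterating this splitting $\ell - 1$ times proves the claim.

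It then only remains to apply the claim to each $k$-small $A_i$: this writes $\omega$ as a finite union of $1$-small sets, and disjointifying gives a finite partition of $\omega$ all of whose blocks are $1$-small, hence miss $P$ on all finite subsets --- contradicting $(\dagger)$, and completing the proof. I expect the main obstacle to be recognizing the correct inductive reformulation --- trading ``$h$ attains a large value on some piece'' for the cleaner splitting statement about $\ell$-small sets --- and then arranging the K\"onig tree so that nonemptiness of its levels is exactly the trivial finite case; the use of non-atomicity to keep singletons small is a minor but indispensable ingredient that is easy to overlook.
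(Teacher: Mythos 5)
Your proof is correct, and it is exactly the K\"onig's-lemma argument that the paper alludes to and defers to \cite{Abraham2010} and \cite{Fischerthesis} for: reduce to showing every $(\ell+1)$-small set splits into two $\ell$-small sets, handle finite sets by negating the inductive clause defining $h(e)\geq \ell+1$ (with non-atomicity covering the $|e|\leq 1$ case), pass to infinite sets via a finitely branching tree of colorings, and then contradict $(\dagger)$ with a finite partition into $1$-small pieces. No gaps; nothing further is needed.
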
 

We now define our main forcing of interest.

\begin{definition}[{\cite[Definition 2.8]{Shelah84}}, 
{\cite[Definition 1.3.5]{Fischerthesis}}]
  \label{def creature}
  Let $\mathbb{Q}$ be the partial order
   consisting of pairs $p = (u, T)$ 
  such that $u \subseteq \omega$ is finite and $T$ is a sequence
  $T = \langle t_i : i \in \omega \rangle$, where for all 
  $i \in \omega$, $t_i$ is a pair
  $t_i = (s_i, h_i)$, where $h_i$ is a finite logarithmic measure
  on $s_i$, and such that: 
  \begin{enumerate}
  \item $\max(u) < \min(s_0)$;
  \item $\max (s_i) < \min(s_{i+1})$ for all $i \in \omega$;
  \item $\langle h(s_i) : i \in \omega \rangle$ is unbounded 
  and strictly increasing.
  \end{enumerate}

  For $T$ as above,
  let $\mathrm{int}(t_i) = s_i$, and $\mathrm{int}(T) = 
  \bigcup_{i \in \omega} \mathrm{int}(t_i)$. When $e \subseteq 
  \mathrm{int}(t_i)$ is such that $h_i(e) > 0$, we say that
 $e$ is $t_i$\emph{-positive}. 
  For conditions $(u_0, T_0), (u_1,T_1) \in \mathbb{Q}$, where
  $T_j = \langle t_i^j : i \in \omega \rangle$, 
  $t_i^j = (s_i^j, h_i^j)$ for 
  $j < 2$, define $(u_1, T_1) \leq (u_0, T_0)$ if and only if:
 
  \begin{enumerate}\addtocounter{enumi}{4}
    \item $u_1$ end-extends $u_0$ and $u_1 \setminus u_0 \subseteq
    \mathrm{int}(T_0)$; 
    \item $\mathrm{int}(T_1) \subseteq \mathrm{int}(T_0)$ and
    there is a sequence $\langle B_i : i \in \omega \rangle$
    of finite subsets of $\omega$ such that $\max(B_i) < \min(B_{i+1})$ 
    and for each $i \in \omega$, $s_i^1 \subseteq \bigcup_{j \in 
    B_i} s_j^0$;
    \item For all $i \in \omega $ and $e \subseteq s_i^1$, if 
    $h_i^1(e) > 0$ then there exists $j \in B_i$ such that 
    $h_j^0(e \intersection s_j^0) > 0$. 
  \end{enumerate}
  In the case $u_1=u_0$, call $(u_1,T_1)$ a \emph{pure extension}
  of $(u_0, T_0)$.
\end{definition}

If $(\emptyset,T)$ is a 
condition in $\mathbb{Q}$, we will identify $(\emptyset, T)$
and $T$, and write simply $T \in \mathbb{Q}$. 
For $T = \langle t_i : i \in \omega \rangle \in \mathbb{Q}$ and 
$k \in \omega$, 
let $$i_T(k) = \min \set{i \in \omega}[k < \min(\mathrm{int}(t_i))]$$
and let $T \setminus k = \langle t_i : i \geq i_T(k) \rangle$. 
Then $T \setminus k \in \mathbb{Q}$ and $T \setminus k \leq T$.
Similarly if $u \subseteq \omega$ is finite, 
$T \setminus \max(u) \in \mathbb{Q}$. 
By a slight abuse of notation, we will understand by 
$(u,T \setminus u)$ to mean
 the condition $
(u,T \setminus \max(u))$ in the case 
$\max(u) \geq \min(\mathrm{int}(t_0))$. 
That $\mathbb{Q}$ satisfies Axiom A (see \cite[Definition 2.3]{Abraham2010})
and hence is proper can 
be established by the following.  
\begin{definition} 
  For $n \in \omega$ and $(u_0,T_0), (u_1, T_1) \in \mathbb{Q}$,
  let $\leq_0$ be the usual partial order on 
  $\mathbb{Q}$. Let $T_j = \langle t_i^j : i \in \omega 
  \rangle$ for $j < 2$. Define
\begin{enumerate}
\item   $(u_1, T_1) \leq_1 (u_0, T_0)$
  iff $(u_1,T_1) \leq_0 (u_0, T_0)$ and $u_1 = u_0$;
\item for $n \geq 1$ let
  $(u_1, T_1) \leq_{n+1} (u_0, T_0)$ iff $(u_1, T_1) \leq_1 (u_0, T_0)$ and  $t_i^1 = t_i^0$ for all $i<n$.
\end{enumerate}
 In particular, $(u_1, T_1) \leq_1 (u_0, T_0)$ if and only if
  $(u_1, T_1)$ is a pure extension of $(u_0, T_0)$. 
\end{definition}

  \begin{definition} \label{def fusion for Q}
  Given a sequence $\langle p_i : i \in \omega \rangle \subseteq 
  \mathbb{Q}$, $p_i= (u, T_i)$, $T_i = \langle t_j^i : j
  \in \omega  \rangle$  such that $p_{i+1} \leq_{i+1} p_i$ for all
  $i \in \omega$, define the 
  \emph{fusion} of $\langle p_i : i \in \omega \rangle $
  to be the condition 
  $q:=(u, \langle t_j : j \in \omega \rangle)$ such that 
  $t_j:= t_j^{j+1}$ for all $j  \in \omega$. 
\end{definition} 

If $q$ is the fusion of $\langle p_i : i \in \omega \rangle$,
then $q \leq_{i+1} p_i$ for all $i \in \omega$.
The following notion is crucial both for proving that
$\mathbb{Q}$ is proper and for our preservation result.

\begin{definition}
For $(u,T) \in \mathbb{Q}$, with $T = \langle t_i : i \in 
\omega \rangle$, and $D$ an open dense subset of $\mathbb{Q}$, we say 
$(u,T)$ is \emph{preprocessed for} $D$ \emph{and} $k \in \omega$ 
if for every $v \subseteq k$ such that 
$v$ end-extends $u$, if $(v, \langle t_j : j \geq k \rangle)$
has a pure extension in $D$, then already 
$(v, \langle t_j : j \geq k \rangle) \in D$. 

\end{definition}

Note
that if $(u,T) \in \mathbb{Q}$ is preprocessed for 
$D$ and $k$, then any extension of $(u,T)$ is also preprocessed
for $D$ and $k$. 

\begin{lemma}[{\cite[Lemma 1.3.9]{Fischerthesis}}]
For every open dense subset $D \subseteq \mathbb{Q}$, every 
$k \in \omega$,
 and every $p \in \mathbb{Q}$, 
there exists $q \in \mathbb{Q}$ such that $q \leq_{k+1} p$ 
and $q$ is preprocessed for $D$ and $k$. 

\end{lemma}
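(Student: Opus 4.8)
The plan is a short finite recursion over the finitely many admissible stems. Write $p=(u,T)$ with $T=\langle t_i:i\in\omega\rangle$. A set $v\subseteq k$ that end-extends $u$ is a subset of $\{0,\dots,k-1\}$, so there are only finitely many; enumerate them as $v_0,\dots,v_{N-1}$. I will produce a $\leq_{k+1}$-decreasing chain $p=p^{(0)}\geq p^{(1)}\geq\dots\geq p^{(N)}=:q$ of conditions, all with stem $u$ and all agreeing with $T$ on the creatures $t_0,\dots,t_{k-1}$. At stage $\ell$, writing $p^{(\ell)}=(u,\langle t_i^{(\ell)}:i\in\omega\rangle)$, I look at the tail condition $(v_\ell,\langle t_j^{(\ell)}:j\geq k\rangle)$ and ask whether it has a pure extension in $D$. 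If it does, I fix such a pure extension and, since $D$ is open, pass to a further pure extension of it — obtained by discarding an initial segment of its creatures — all of whose creatures have level exceeding $h(s_{k-1})$; calling the resulting creature sequence $\langle s_j:j\geq k\rangle$, I set $p^{(\ell+1)}:=(u,\langle t_0,\dots,t_{k-1},s_k,s_{k+1},\dots\rangle)$. If it does not, I set $p^{(\ell+1)}:=p^{(\ell)}$. One checks routinely that $p^{(\ell+1)}\in\mathbb{Q}$ (clauses (1)--(2) of Definition~\ref{def creature} because the new creatures live in $\mathrm{int}(T)$ above $\max(s_{k-1})$, clause (3) because the levels were arranged to keep increasing across index $k$) and that $p^{(\ell+1)}\leq_{k+1}p^{(\ell)}$ (use the blocks $\{j\}$ for $j<k$ and the blocks of the chosen pure extension for $j\geq k$).

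It then remains to verify that $q=p^{(N)}$ is preprocessed for $D$ and $k$. First note that, as each step only refines creatures of index $\geq k$, the tail $\langle t_j^{(N)}:j\geq k\rangle$ of $q$ is — when paired with any admissible stem — a pure extension of every earlier tail $\langle t_j^{(\ell)}:j\geq k\rangle$. Now fix $\ell<N$ and suppose $(v_\ell,\langle t_j^{(N)}:j\geq k\rangle)$ has a pure extension in $D$; I must show it already belongs to $D$. Were we in the second case at stage $\ell$, then $(v_\ell,\langle t_j^{(\ell)}:j\geq k\rangle)$ has no pure extension in $D$, so by transitivity of $\leq$ neither does its pure extension $(v_\ell,\langle t_j^{(N)}:j\geq k\rangle)$ — contradicting the supposition. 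So we were in the first case at stage $\ell$, whence $(v_\ell,\langle t_j^{(\ell+1)}:j\geq k\rangle)\in D$ by construction; and since $(v_\ell,\langle t_j^{(N)}:j\geq k\rangle)$ is a pure extension of this member of $D$, openness of $D$ yields $(v_\ell,\langle t_j^{(N)}:j\geq k\rangle)\in D$.

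The one point that genuinely needs care is that refining the tail on behalf of a later stem $v_{\ell'}$ does not spoil the property already arranged for an earlier $v_\ell$ — and this is precisely what openness of $D$ buys us, so that the recursion may be run naively, one stem at a time, with no fusion required; everything in sight is finite. (In fact density of $D$ is never used.) The only other thing to attend to is the purely combinatorial check, sketched above, that the reassembled sequences are legitimate $\mathbb{Q}$-conditions and that a composition of $\leq_{k+1}$-extensions is again a $\leq_{k+1}$-extension.
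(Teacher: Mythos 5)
Your argument is correct and is essentially the standard proof of this lemma (the paper itself only cites \cite[Lemma 1.3.9]{Fischerthesis}): a finite recursion over the at most $2^k$ admissible stems $v\subseteq k$, handling one stem per step, with openness of $D$ guaranteeing that later refinements of the tail do not destroy the membership in $D$ arranged at earlier steps, and with the level-adjustment at the seam (discarding an initial segment of the chosen tail so that its levels exceed $\mathrm{level}(t_{k-1})$) correctly ensuring clause (3) of Definition~\ref{def creature} for the reassembled condition. Your observation that only openness, not density, of $D$ is used is also accurate.
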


As a consequence of the existence of fusion for $\mathbb{Q}$:

\begin{lemma} \label{preprocessedforD}
 For every open dense $D \subseteq \mathbb{Q}$
and every $p \in \mathbb{Q}$ there exists a pure extension 
$q \leq p$ such that $q$ is preprocessed for $D$ and every 
$k \in \omega$. 

\end{lemma}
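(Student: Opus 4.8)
The plan is a routine fusion argument, combining the single-step preprocessing lemma just stated with the existence of fusions for $\mathbb{Q}$ (Definition \ref{def fusion for Q} and the remark following it) and with the observation, noted above, that if $(u,T)$ is preprocessed for $D$ and $k$ then so is every extension of $(u,T)$.

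First I would build a fusion sequence $\langle p_k : k \in \omega \rangle$ starting from $p$. Write $p = (u,T)$ and set $p_{-1} := p$. Given $p_{k-1}$, apply the preceding lemma to the open dense set $D$, the integer $k$, and the condition $p_{k-1}$, obtaining $p_k \leq_{k+1} p_{k-1}$ such that $p_k$ is preprocessed for $D$ and $k$. Each $p_k$ is in particular a pure extension of $p_{k-1}$, so all the $p_k$ share the common first coordinate $u$, and hence $\langle p_k : k \in \omega\rangle$ is a legitimate fusion sequence: from $p_{k+1} \leq_{k+2} p_k$ and the monotonicity of the orders (for $m \leq n$, $\leq_{n+1}$ refines $\leq_{m+1}$, since it requires agreement of more coordinates, and $\leq_{1}$ refines $\leq_0$) we get $p_{k+1} \leq_{k+1} p_k$ for every $k \in \omega$.

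Now let $q$ be the fusion of $\langle p_k : k \in \omega\rangle$. By the basic property of fusion, $q \leq_{k+1} p_k$ for every $k \in \omega$; in particular $q \leq_1 p_0 \leq_1 p_{-1} = p$, so $q$ is a pure extension of $p$, i.e. $q \leq p$. Moreover, for each $k \in \omega$ we have $q \leq p_k$ and $p_k$ is preprocessed for $D$ and $k$, so by the remark above $q$ is preprocessed for $D$ and $k$ as well. Since $k$ was arbitrary, $q$ is preprocessed for $D$ and every $k \in \omega$, as desired.

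I do not expect a genuine obstacle here: the substantive work — producing a single preprocessed pure(-up-to-a-finite-front) extension — is exactly the content of the preceding lemma, which we take as given. The only point requiring care is the index bookkeeping: one must invoke the single-step lemma with the index shifted up by one (so that handling the requirement for $k$ produces a $\leq_{k+1}$-extension rather than merely a $\leq_k$-extension), precisely so that the resulting sequence is a genuine fusion sequence and the fusion $q$ still lies below every $p_k$; and one must observe that every condition in the sequence keeps the common stem $u$ so that the fusion is defined.
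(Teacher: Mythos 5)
Your proof is correct and is exactly the argument the paper intends: the paper omits the proof, merely remarking that the lemma is ``a consequence of the existence of fusion for $\mathbb{Q}$,'' and your fusion of the single-step preprocessed extensions $p_k \leq_{k+1} p_{k-1}$, together with the observation that preprocessing is preserved under extension, supplies precisely that argument. The index bookkeeping you flag (using $\leq_{k+1}$ so the sequence is a genuine fusion sequence and the fusion lies below every $p_k$) is handled correctly.
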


Let $\dot{C}$ be a $\mathbb{Q}$-name for a subset of 
$\omega$, and let $j \in \omega$. Let $\dot{C}(j)$ denote 
the name for the $j$-th element of $\dot{C}$. 
 We say a condition $p$ \emph{decides} $\dot{C}(j)$
 if there exists $\ell \in \omega$ such that 
 $p \forces \dot{C}(j) = \check{\ell}$. 
 For such $\dot{C}$ and $j \in \omega$, let
$E_{C(j)} = \set{p \in \mathbb{Q}}[p \text{ decides } 
\dot{C}(j)].$ 
When $p\in \mathbb{Q}$ forces that 
$\dot{C}$ is infinite, the set $E_{C(j)}$ is open dense below $p$
in 
$\mathbb{Q}$.


\begin{lemma}\label{stepA}
Let $T\in\mathbb{Q}$, let  
$\dot{C}$ be a $\mathbb{Q}$-name for an infinite subset of 
$\omega$ and let  $n,j \in \omega$. Let $v \subseteq n$. 
Then there is $R = \langle r_i : i \in \omega \rangle \in \mathbb{Q}$ such that 
$R \leq T$ and 
 for all $i \in \omega$ and $r_i$-positive 
$s \subseteq \mathrm{int}(r_i)$, 
there is $w \subseteq s$ such that 
$(v \union w,  R \setminus \max(s))$ decides
$\dot{C}(j)$. 
\end{lemma}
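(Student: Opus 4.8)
The plan is to build $R$ by a fusion argument, processing one interval $t_i$ of the condition at a time, so that at stage $i$ we guarantee the required decision property for the single creature $r_i$. First I would apply Lemma \ref{preprocessedforD} to replace $T$ by a pure extension that is preprocessed for the open dense set $E_{C(j)}$ (and, for safety, every $k$); this lets us detect whether a pure extension deciding $\dot C(j)$ exists by inspecting membership in $E_{C(j)}$ directly, rather than having to pass to a further extension each time. Call this condition $T^0 = \langle t_i^0 : i\in\omega\rangle$.

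Now I would construct a $\leq_{i+1}$-decreasing sequence $\langle p_i : i\in\omega\rangle$ with $p_0 = (\emptyset, T^0)$ and with fusion $R$. Suppose $p_i = (\emptyset, T^i)$ has been built with $t^i_0,\dots,t^i_{i-1}$ already frozen. The key step is to shrink the $i$-th creature $t^i_i = (s^i_i, h^i_i)$ to a sub-creature $r_i = (s_i, h_i)$ with $h_i$ still of high enough level (using the hypothesis that the levels along $T$ are unbounded, so we have room to spare) such that for \emph{every} $r_i$-positive $s\subseteq s_i$ there is $w\subseteq s$ with $(v\cup w, R\setminus\max(s))$ deciding $\dot C(j)$. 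To do this I would enumerate the finitely many sets that are $h^i_i$-positive in $s^i_i$; for each such set $s$, I would like to find $w\subseteq s$ and a pure extension of $(v\cup w, \langle t^i_\ell : \ell > i\rangle)$ lying in $E_{C(j)}$. The point is that $(v\cup w, T^i\setminus\max(s))$ always has \emph{some} extension in $E_{C(j)}$ (as $E_{C(j)}$ is dense below any condition forcing $\dot C$ infinite), and by genericity of the $\mathbb{Q}$-generic real one can find such an extension that is pure above $\max(s)$ after absorbing a suitable finite $w$ into the stem — this is exactly the kind of ``stem-absorption'' step that the preprocessing is designed to handle: after preprocessing, if a pure extension of $(v\cup w,\langle t_\ell : \ell \geq k\rangle)$ lies in $E_{C(j)}$ then that condition itself does. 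One then further thins $t^i_i$ so that the chosen tail works simultaneously for all $h^i_i$-positive subsets; since there are only finitely many of them and the positivity structure is preserved under the thinning of clauses (6)–(7) of Definition \ref{def creature}, a finite intersection argument suffices, and a logarithmic-measure computation (Lemma \ref{measures in one partition piece lemma}) keeps the level of the thinned creature large.

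After all stages, let $R$ be the fusion of $\langle p_i : i\in\omega\rangle$; then $R\leq T$ and $R = \langle r_i : i\in\omega\rangle$ with each $r_i$ as arranged at stage $i$, so the conclusion holds. The main obstacle I anticipate is the simultaneous thinning at a fixed stage: one must shrink $s^i_i$ to kill all the ``bad'' positive subsets while (a) retaining enough level that clause (3) of Definition \ref{def creature} can still be met in the fusion, and (b) ensuring that the sets declared positive in the shrunken creature are genuinely positive — i.e. the map $s\mapsto w$ is chosen coherently so that the witnessing finite sets $w$ for nested or overlapping positive sets do not conflict. Handling this cleanly is where the non-atomicity assumption on the induced measures and the exact form of Definition \ref{def creature}(7) will be used; everything else is a routine fusion bookkeeping argument.
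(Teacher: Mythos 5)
There is a genuine gap, and it sits exactly at the step you flag as the ``main obstacle.'' Your plan keeps the original creatures $t^i_i$ (merely thinned, with the restriction of the original measure $h^i_i$) and asks that for \emph{every} $h^i_i$-positive $s\subseteq s^i_i$ there be a witness $w\subseteq s$ such that $(v\cup w, R\setminus\max(s))$ decides $\dot C(j)$. But density of $E_{C(j)}$ below $(v, T^i\setminus\max(s))$ only produces an extension whose added stem $w'$ is drawn from $\mathrm{int}$ of the whole tail, i.e.\ from arbitrarily many intervals beyond $s$; preprocessing lets you discard the impure part of the tail, but it does not relocate $w'$ into $s$. For a general name, no $w$ confined to a positive subset of a single original interval $t_i$ decides $\dot C(j)$, so the per-creature thinning you propose has nothing to select, and no finite-intersection or level-counting argument over the finitely many positive subsets of $s^i_i$ can repair this. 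The fusion framework (freeze $t_0,\dots,t_{i-1}$, shrink $t_i$) is therefore the wrong decomposition for this lemma.

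The paper's proof instead \emph{replaces} the measures. It defines the upward-closed family $\mathcal{P}_v(C(j))$ of finite $x\subseteq\mathrm{int}(T)$ that (i) meet some $t_k$ positively and (ii) \emph{contain their own witness} $w\subseteq x$ with $(v\cup w,\langle t_\ell:\ell>\max(x)\rangle)\in E_{C(j)}$; it takes the logarithmic measure $h$ induced by this family, verifies the partition hypothesis $(\dagger)$ of Lemma \ref{highvalues} (this is where preprocessing is used: extend into $E_{C(j)}$, then purify the tail, and note the witness $w$ lies inside a finite \emph{block} $x=\bigcup_{m\in[m_0,m_1]}\mathrm{int}(t'_m)$ of consecutive old intervals), and then builds $R$ whose creatures are $(x_n, h\restriction\mathcal{P}(x_n))$ for $x_n\in\mathcal{P}_v(C(j))$ of increasing induced level. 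The conclusion is then automatic: ``$r_n$-positive'' now \emph{means} membership in $\mathcal{P}_v(C(j))$, hence carries the decision witness by definition. The two essential ideas missing from your proposal are the passage to new intervals that are unions of blocks of old ones (so that witnesses fit inside a single positive set) and the use of the induced measure of an upward-closed ``good'' family together with Lemma \ref{highvalues}, rather than a restriction of the original measures.
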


\begin{proof}
Let $T = \langle t_i : i \in \omega \rangle$ with 
$t_i = (s_i, h_i)$.
By Lemma \ref{preprocessedforD} we can
 suppose that $T$ is preprocessed for 
$E_{C(j)}$ and every $k \in \omega$. 
Let $\mathcal{P}_v(C(j))$ denote those 
$ x \in [\mathrm{int}(T)]^{<\omega}$ such that: 
\begin{enumerate}
\item For some $k \in \omega$, $x \intersection \mathrm{int}(t_k)$
is $t_{k}$-positive; 
\item There exists $w \subseteq x$ such that 
$(v \union w, \langle t_i : i > \max(x) \rangle)$ 
decides $\dot{C}(j)$. 
\end{enumerate}
Note that $\mathcal{P}_v(C(j))$ is upwards closed. 
Let $h \from [\omega]^{<\omega} \to \omega$ be the logarithmic measure
induced by $\mathcal{P}_v(C(j))$. We will show that $h$
takes arbitrarily high values by establishing 
$(\dagger)$ of Lemma \ref{highvalues}. 

Fix $M \in \omega$ and a finite partition $\omega = A_0 \union \dots A_{M-1}$. 
First find $T' \leq T$ such that 
$\mathrm{int}(T') \subseteq A_N$ for some 
$N< M$; such an extension exists by K\"onig's lemma 
argument (see, for example, \cite{Fischerthesis}).
Since $(v, T' \setminus v) \in \mathbb{Q}$, there 
is $w \subseteq \mathrm{int}(T' \setminus v)$ and 
$R$ such that 
$(v \union w, R)$ is a condition in $\mathbb{Q}$ extending
$(v, T' \setminus v)$ and $(v \union w, R)$ decides the 
value of $\dot{C}(j)$. 
Since $w \subseteq \mathrm{int}(T' \setminus v)$ is finite, 
by definition of the extension relation  
there are $m_0, m_1$ such that 
$w \subseteq \bigcup_{m \in [m_0, m_1]} \mathrm{int}(t_m')$. 
Let $x:= \bigcup_{m \in [m_0, m_1]} \mathrm{int}(t_m')$ and 
note $x \in [A_N]^{< \omega}$. 
As $T' \setminus v \in \mathbb{Q}$ we may assume there is 
at least one $m \in [m_0, m_1]$ such that 
$\mathrm{int}(t_m')$ is $t_m'$-positive. Then 
$T' \setminus v \leq T \setminus v$ and so there is 
$k \in \omega$ such that 
$h_k(\mathrm{int}(t_m') \intersection \mathrm{int}(t_k)) = 
h_k(x \intersection \mathrm{int}(t_k)) > 0$. 
Therefore $x \subseteq \mathrm{int}(T') \subseteq A_N$
satisfies (1) in the definition of 
$\mathcal{P}_v(C(j))$. 

We can also show that (2) holds. We have 
that $w \subseteq x$ is such that 
$(v \union w, R) \in E_{C(j)}$, but 
$T$ was preprocessed for $E_{C(j)}$ and $\max w$, 
and $(v \union w, T \setminus w)$ has a pure extension into 
$E_{C(j)}$ so already 
$(v \union w, T \setminus w) \in E_{C(j)}$. 
Altogether we have found $x \in \mathcal{P}_v(C(j)) \intersection 
{[}A_N{]}^{< \omega}$, verifying 
$(\dagger)$. 



We can now define $R = \langle r_n : n \in \omega \rangle$, 
where $r_n = (x_n, g_n)$, inductively as follows. 
Clearly $\mathcal{P}_v(C(j))$ is nonempty, as we just showed 
above, so pick $x_0 \in \mathcal{P}_v(C(j))$, 
and let $g_0:= h \restriction \mathcal{P}(x_0)$. 
Assuming $r_i = (x_i, g_i)$ defined for all $i \leq n$ 
so that $\max(x_i)< \min(x_{i+1})$ and $g_i(x_i)< g_{i+1}(x_{i+1})$ 
for $i < n$, since $h$ takes arbitrarily high values there
is $x_{n+1} \in \mathcal{P}_v(C(j))$ with 
$h(x_{n+1}) > h(x_n)$. We can assume 
$\max(x_n) < \min (x_{n+1})$, since otherwise $h$ is bounded. 
Define $g_{n+1} := h \restriction \mathcal{P}(x_{n+1})$. This completes the definition of $R$. 

Then $R \in \mathbb{Q}$, and it is routine to 
check $R$ extends $T$.
To see $R$ is as desired, if $s \subseteq x_i$ is 
$r_i$-positive, by (2) of the
definition of $\mathcal{P}_v(C(j))$ there is $w \subseteq s$
such that $( v \union w, \langle t_i : i > \max(s) \rangle)$ decides 
$C(j)$, but $(v \union w, \langle r_i : i > \max(s) \rangle ) \leq 
( v \union w, \langle t_i : i > \max(s) \rangle)$ and so makes the same 
decision. 
\end{proof}

\begin{remark} \label{stepA preserved under extension}
If a condition $R$ in $\mathbb{Q}$ has the property as in the 
conclusion of the above lemma, then any further extension retains
this same property.
\end{remark}

To obtain the next lemma consider each 
$v \in \mathcal{P}(n)$ and repeatedly apply Lemma 
\ref{stepA} and Remark \ref{stepA preserved under extension}
(see e.g.  \cite{Fischerthesis}).

\begin{lemma}\label{stepB}
For any $T \in \mathbb{Q}$, $n, j \in \omega$ and $\dot{C}$ a 
$\mathbb{Q}$-name for an infinite subset of $\omega$, there 
exists $R = \langle r_i : i \in \omega \rangle \in \mathbb{Q}$
such that $R \leq T$ and for all $v \subseteq n$, for all 
$i \in \omega$ and $r_i$-positive 
$s \subseteq \mathrm{int}(r_i)$, there is 
$w \subseteq s$ such that
$(v \union w, R \setminus s)$ decides $\dot{C}(j)$. 
\end{lemma}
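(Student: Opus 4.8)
The plan is a short finite induction over the subsets of $n$, using Lemma \ref{stepA} as the single-step engine and Remark \ref{stepA preserved under extension} to ensure that progress made for one $v$ is not destroyed when we work on the next. First I would enumerate $\mathcal{P}(n) = \{v_0, \dots, v_{m-1}\}$, where $m = 2^n$, and set $R_0 := T$. Then, recursively for each $k < m$, I would apply Lemma \ref{stepA} to the condition $R_k$, the name $\dot{C}$, the integers $n, j$, and the set $v := v_k$, obtaining $R_{k+1} \leq R_k$, say $R_{k+1} = \langle r_i : i \in \omega \rangle$, with the property that for every $i \in \omega$ and every $r_i$-positive $s \subseteq \mathrm{int}(r_i)$ there is $w \subseteq s$ such that $(v_k \union w,\, R_{k+1} \setminus \max(s))$ decides $\dot{C}(j)$.

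The key point is then that Remark \ref{stepA preserved under extension} guarantees that, once this per-$v_l$ decision property holds of $R_{l+1}$, it persists in every later condition $R_{k+1}$ with $k \geq l$, since $R_{k+1} \leq R_{l+1}$. Consequently the final condition $R := R_m$ simultaneously enjoys the property for all $v_l$ with $l < m$, i.e.\ for every $v \subseteq n$: for all $i \in \omega$ and all $r_i$-positive $s \subseteq \mathrm{int}(r_i)$ there is $w \subseteq s$ with $(v \union w,\, R \setminus \max(s))$ deciding $\dot{C}(j)$. Moreover $R \leq T$ by transitivity of $\leq$ along the finite chain $R_m \leq R_{m-1} \leq \dots \leq R_0 = T$. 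Recalling that $R \setminus s$ abbreviates $R \setminus \max(s)$, this is exactly the statement of the lemma.

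The only point requiring any care is to observe that when we pass from $R_k$ to $R_{k+1}$ the blocks $r_i$ and their logarithmic measures are replaced by coarser ones, so the $r_i$-positive sets of $R_{k+1}$ are genuinely different from those of $R_k$; it is therefore essential to invoke Remark \ref{stepA preserved under extension} — not merely monotonicity of the poset — to see that these new positive sets still admit suitable witnesses $w$ for each of the previously treated $v_l$. Beyond this bookkeeping there is no substantive obstacle: the number of relevant $v$'s is finite, so no fusion or diagonalization is needed at this stage.
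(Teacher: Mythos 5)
Your proposal is correct and is exactly the argument the paper intends: it explicitly says to "consider each $v \in \mathcal{P}(n)$ and repeatedly apply Lemma \ref{stepA} and Remark \ref{stepA preserved under extension}," which is precisely your finite induction over $\mathcal{P}(n)$ with persistence supplied by the remark. Your closing caveat — that the positive sets of $R_{k+1}$ differ from those of $R_k$, so the remark (rather than bare monotonicity) is what is really needed — correctly identifies the one point of substance.
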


\begin{corollary} \label{n+1 extension of stepB}
For any $(u, T) \in \mathbb{Q}$, any $n, j \in \omega$, and
any $\dot{C}$ a $\mathbb{Q}$-name for an infinite subset of 
$\omega$, 
there exists $(u, R) \leq_{n+1} (u,T)$ such that for all 
$v \subseteq n $, for all 
$i \geq n$ and for every $s \subseteq \mathrm{int}(r_i)$ 
which is $r_i$-positive, there exists 
$w \subseteq s$ such that 
$(v \union w, R \setminus s)$ decides the value of 
$\dot{C}(j)$. 
\end{corollary}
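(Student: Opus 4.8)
The plan is to apply Lemma \ref{stepB} to the tail of $T$ lying past its first $n$ creatures and then to reattach those creatures. Write $T = \langle t_i : i \in \omega \rangle$ with $t_i = (s_i, h_i)$ and set $T^\ast := \langle t_i : i \geq n \rangle$; a tail of a strictly increasing, unbounded sequence of levels is again strictly increasing and unbounded, so $T^\ast \in \mathbb{Q}$. Applying Lemma \ref{stepB} to $T^\ast$, $n$, $j$ and $\dot{C}$ produces $R^\ast = \langle r_i^\ast : i \in \omega \rangle \leq T^\ast$ such that for every $v \subseteq n$, every $i \in \omega$ and every $r_i^\ast$-positive $s \subseteq \mathrm{int}(r_i^\ast)$ there is $w \subseteq s$ with $(v \union w, R^\ast \setminus s)$ deciding $\dot{C}(j)$.

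Since the level sequence of $R^\ast$ is unbounded, I would fix $m \in \omega$ with $h_m^\ast(\mathrm{int}(r_m^\ast)) > h_{n-1}(s_{n-1})$, taking $m = 0$ if $n = 0$, and define $R = \langle r_i : i \in \omega \rangle$ by putting $r_i := t_i$ for $i < n$ and $r_{n+k} := r_{m+k}^\ast$ for $k \in \omega$. That $(u, R) \in \mathbb{Q}$ is routine: clauses (1) and (2) of Definition \ref{def creature} follow from $(u,T) \in \mathbb{Q}$, $R^\ast \in \mathbb{Q}$ and $\mathrm{int}(R^\ast) \subseteq \mathrm{int}(T^\ast)$, while clause (3) holds since the first $n$ levels are those of $T$, the jump at the seam is provided by the choice of $m$, and the rest is a tail of the strictly increasing unbounded level sequence of $R^\ast$. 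For $(u, R) \leq (u, T)$ one takes the blocks witnessing clauses (6) and (7) to be $B_i = \{i\}$ for $i < n$ and the $n$-shifts of the blocks witnessing $R^\ast \leq T^\ast$ for $i \geq n$; the positivity transfer in clause (7) then reduces to the corresponding clause for $R^\ast \leq T^\ast$. As $u$ is unchanged and $r_i = t_i$ for $i < n$, this gives $(u, R) \leq_{n+1} (u, T)$.

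It remains to transfer the decision property. Fix $v \subseteq n$, an index $i \geq n$, write $i = n+k$, and let $s \subseteq \mathrm{int}(r_i) = \mathrm{int}(r_{m+k}^\ast)$ be $r_i$-positive; since $r_{n+k}$ and $r_{m+k}^\ast$ are the same finite logarithmic measure, this is the same as $s$ being $r_{m+k}^\ast$-positive. Lemma \ref{stepB} yields $w \subseteq s$ with $(v \union w, R^\ast \setminus s)$ deciding $\dot{C}(j)$, and the key observation is that $R \setminus s = R^\ast \setminus s$. Indeed, $s \subseteq \mathrm{int}(r_{m+k}^\ast) \subseteq \mathrm{int}(T^\ast)$ forces $\max(s) > \max(s_{n-1})$, so the creatures of $R$ lying past $\max(s)$ are all among $r_m^\ast, r_{m+1}^\ast, \dots$, and they are precisely $r_{m+k+1}^\ast, r_{m+k+2}^\ast, \dots$ --- which is exactly the tail $R^\ast \setminus s$. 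Hence $(v \union w, R \setminus s) = (v \union w, R^\ast \setminus s)$ decides $\dot{C}(j)$, as needed.

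The only step requiring any care --- and the reason the statement is phrased with $\leq_{n+1}$ and a restriction to $i \geq n$, rather than mirroring Lemma \ref{stepB} verbatim --- is the seam in clause (3) of Definition \ref{def creature}: after prepending $t_0, \dots, t_{n-1}$ the level sequence could fail to be strictly increasing, which is exactly what the passage from $R^\ast$ to its tail starting at index $m$ repairs. Everything else is bookkeeping with the extension relation.
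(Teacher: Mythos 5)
Your argument is correct and is exactly the intended derivation: the paper states this as an unproved corollary of Lemma \ref{stepB}, and applying that lemma to the tail $\langle t_i : i \geq n\rangle$, discarding an initial segment of the resulting condition to restore monotonicity of the levels at the seam, and reattaching $t_0,\dots,t_{n-1}$ is precisely the standard way to obtain the $\leq_{n+1}$-extension. Your verification that $R \setminus s$ and $R^\ast \setminus s$ coincide (so the decision property transfers verbatim) is the only point needing care, and you handle it correctly.
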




The central result of this section is 
the following.

\begin{theorem} \label{Prop Q strongly preserves tightness}
Let $\calA$ be a tight mad family. For every $p \in \mathbb{Q}$ and
$M \prec H_\theta$ countable elementary submodel, where 
$\theta$ is sufficiently large, containing 
$\mathbb{Q}, p, \calA$, and every $B \in \mathcal{I}(\calA)$ such that 
$\card{B \intersection Y} = \aleph_0$ for all $Y \in \mathcal{I}
(\calA)^+ \intersection M$, and $\dot{Z}$ a $\mathbb{Q}$-name for an 
element of $\mathcal{I}(\calA)^+$ in $M$, there exists 
a pure extension 
$q \leq p$ such that $q$ is $(M, \mathbb{Q}, \calA, B)$-generic. 
\end{theorem}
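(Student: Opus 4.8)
The plan is to run an Axiom~A fusion along the partial orders $\leq_n$ of Definition~\ref{def fusion for Q}. By Definition~\ref{preserve def}, we must produce a pure extension $q\le p$ which is $(M,\mathbb{Q})$-generic and forces every element of $\calI(\calA)^+\cap M[\dot G]$ to have infinite intersection with $B$; in particular this handles the named $\dot Z$. Working inside $M$, I would fix a bookkeeping enumeration listing every open dense $D\subseteq\mathbb{Q}$ in $M$ and, recurring infinitely often, every $\mathbb{Q}$-name $\dot Z\in M$ for an element of $\calI(\calA)^+$, together with a cofinal increasing sequence $\langle N_n:n\in\omega\rangle$. Then I build a chain $p=p_0\geq_1 p_1\geq_2 p_2\geq_3\cdots$ of pure extensions of $p$, all with stem $u_p$, and let $q$ be its fusion (Definition~\ref{def fusion for Q}); since the stems are constant, $q$ is automatically a pure extension of $p$. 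At step $n$ I make $p_{n+1}$ deal with the $n$-th dense set and the $n$-th name. The dense set $D_n$ is handled as in Lemma~\ref{preprocessedforD}, by passing to a $\leq_{n+1}$-extension preprocessed for $D_n$ and every $k$; doing this at every stage makes $q$ an $(M,\mathbb{Q})$-generic condition, exactly as in the proof that $\mathbb{Q}$ is proper.

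The real work is to make $p_{n+1}$ force a new element of $\dot Z\cap B$ above $N_n$, where $\dot Z:=\dot Z_n$; more precisely, to arrange that below the fusion $q$ it will be dense to force such an element. First I apply Corollary~\ref{n+1 extension of stepB} (iterated over the relevant indices $j$) to pass to a $\leq_{n+1}$-extension $R$ in which membership of integers in $\dot Z$ is decided \emph{locally}: for every candidate stem $v$ at level $n$ and every positive $w$ in a block of $R$ above level $n$ there is $w'\subseteq w$ such that $(v\cup w',R\setminus w)$ decides the pertinent value $\dot Z(j)$; by Remark~\ref{stepA preserved under extension} this persists under further thinning. For each of the finitely many candidate stems $v$ I then form the \emph{refined outer hull} $W_v$ of $\dot Z$ relative to $v$ and $R$: the set of integers forceable into $\dot Z$ by a local extension of this shape. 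The key point --- an analogue of Fact~\ref{outer hull in filter fact}, which I would establish as Claim~\ref{claim outer hull creature} and Lemma~\ref{pure outer hull} --- is that after the Corollary~\ref{n+1 extension of stepB} preprocessing the local extensions already capture all of $\dot Z$, so the condition forces $\dot Z\subseteq W_v$ and therefore $W_v\in\calI(\calA)^+$; since $\dot Z,R\in M$ also $W_v\in M$. By the hypothesis on $B$ the set $W_v\cap B$ is then infinite, so I choose $m_v\in W_v\cap B$ with $m_v>N_n$ together with a local extension forcing $m_v\in\dot Z$. Handling the finitely many $v$ in turn and thinning only the tail of $R$ at each step --- which leaves the frozen blocks untouched and, since ``$m_v\in\dot Z$'' is preserved under extension, does not destroy the decisions already made --- produces $p_{n+1}\leq_{n+1}p_n$ with the stated property.

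It remains to check that $q$ works. Genericity over $M$ comes from the preprocessing. Let $\dot Z\in M$ name an element of $\calI(\calA)^+$, let $N\in\omega$, and let $q'\le q$. Choosing a stage $n$ with $\dot Z_n=\dot Z$, $N_n>N$, and $n$ large enough that the stem of $q'$ is a candidate stem at level $n$, I route $q'$ onto the good node $(v,w)$ prepared at stage $n$: the resulting extension of $q'$ forces $\dot Z\cap B$ to contain an element above $N$. Thus it is dense below $q$ to extend to a condition forcing $\dot Z\cap B\not\subseteq N$, whence $q\forces\card{\dot Z\cap B}=\omega$. Since this holds for every name $\dot Z\in M$ for an element of $\calI(\calA)^+$, the condition $q$ is $(M,\mathbb{Q},\calA,B)$-generic, as required.

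I expect the main obstacle to be the refined outer hull lemma itself (Claim~\ref{claim outer hull creature}/Lemma~\ref{pure outer hull}): one must pin down a class of local stem-extensions --- extending the stem only by a positive subset of a single block --- which, after the Corollary~\ref{n+1 extension of stepB} preprocessing, remains rich enough that the condition forces $\dot Z$ to lie inside the associated hull, so that the hull inherits membership in $\calI(\calA)^+$ from $\dot Z$ via Fact~\ref{outer hull in filter fact}, while staying restrictive enough that the witnessing extensions can be absorbed by the fusion. The attendant bookkeeping --- reconciling the ``$v\subseteq n$'' quantifier of Lemma~\ref{stepB} with the actual stems the generic realises at a fusion level, and routing an arbitrary extension of $q$ onto one of the finitely many prepared nodes --- is routine but has to be set up with care.
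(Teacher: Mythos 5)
Your proposal is correct and follows essentially the same route as the paper's proof: a $\leq_{n+1}$-fusion of pure extensions interleaving preprocessing for the open dense sets of $M$ with, at each stage, the formation of a refined outer hull whose membership in $\calI(\calA)^+\cap M$ is extracted from Corollary~\ref{n+1 extension of stepB}, followed by the choice of an element of the hull lying in $B$ and a routing argument onto the prepared nodes at the end. The only cosmetic differences are that the paper uses a single hull $W_{n+1}$ whose witnessing condition works uniformly for all candidate stems $v\subseteq n$ (rather than per-stem hulls $W_v$ handled in turn), and that it establishes positivity of the hull directly --- by showing $W_{n+1}\setminus\bigcup F$ is infinite for each finite $F\subseteq\calA$, using one large index $j$ of the name for $\dot Z_n\setminus\bigcup F$ at a time --- rather than via the stronger claim that the condition forces $\dot Z\subseteq W_v$.
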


\begin{proof}
Fix $\theta, M$ and $B$ as above, and let 
$(u, T_0)$ be a condition in $\mathbb{Q} \intersection 
M$.
Let $\set{D_n}[n \in \omega]$ enumerate all open dense subsets of 
$\mathbb{Q}$ in $M$, and let
 $\set{\dot{Z}_n}[n \in \omega]$ enumerate all 
$\mathbb{Q}$-names for subsets of $\omega$ in $M$ which are 
forced to be in $\mathcal{I}(\calA)^+$ such that each name appears
infinitely often. We inductively 
define a sequence $\langle q_n : n \in \omega \rangle$
of conditions in $\mathbb{Q} \intersection M$, where 
$q_0 = (u,T_0)$, $q_n = (u, T_n)$ with $T_n = 
\langle t_i^n : i \in \omega \rangle$, such that the following 
are satisfied: 

\begin{enumerate}
\item For all $n \in \omega$, $q_{n+1} \leq_{n+1} q_n$; 
\item $q_{n+1}$ is preprocessed for $D_n$ and every 
$k \in \omega$;
\item For all $v \subseteq n $,
for all $i \geq n$ and $s \subseteq \mathrm{int}
(t_{i}^{n+1})$ which is $t_i^{n+1}$-positive, if $v$
is an end-extension of $u$, 
then for some $w \subseteq s$, $((v \union w), \langle t_j^{n+1} :  j > 
\max(s) \rangle ) \forces (\dot{Z}_n \intersection B) \setminus n \neq 
\emptyset$. 
\end{enumerate}

Suppose $q_n$ has been constructed. To define 
$q_{n+1}$ consider the condition $(u, \langle t_i^n : i \geq n \rangle
 ) \leq q_n$. By Lemma \ref{preprocessedforD}, there is a pure extension 
$(u, \langle t_i' : i \geq n \rangle ) \leq (u, \langle t_i^n : i \geq n 
\rangle)$ which is preprocessed for $D_n$ and every $k \in \omega$. 
Let $q_n^0 = (u, \langle t_i^{n,0} : i \in \omega \rangle)$, 
where for $i < n$, $t_i^{n,0} = t_i^n$, and $t_i^{n,0}= t_i'$ 
for $i \geq n$. Then $q_n^0 \leq_{n+1} q_n$ and 
$q_n^0$ satisfies (2). 
Next, consider the set 
\begin{align*}
W_{n+1}  = \{ m \in \omega \mid & 
\exists  r = (u, \langle t_i' : i \in \omega \rangle )
\leq_{n+1} q_n^0 \text{ satisfying: }\\
& \forall v \subseteq n\  \forall i \geq n \ 
\forall  s \subseteq \mathrm{int}(t_i')\ 
[s \text{ is } t_i' \text{-positive} \Rightarrow  \\ &
\exists w \subseteq s (v \union w, \langle t_i' : i > \max(s) \rangle)
\forces m \in \dot{Z}_n ]\}
\end{align*}

\begin{clm} \label{claim outer hull creature}
$W_{n+1} \in \mathcal{I}(\calA)^+ \intersection M$.
\end{clm}

\begin{proof}
We have that $W_{n+1} \in M $ since it is definable from
the forcing relation and from 
$\mathbb{Q}$, $q_n^0$, and $\dot{Z}_n$, which are all assumed 
to be elements of $M$. 
To see $W_{n+1} \not\in \calI(\calA)$, let $F$ be a finite subset of $\calA$. We will show that $W_{n+1} \setminus \bigcup F$ is infinite. 
Since $\forces_{\mathbb{Q}} \dot{Z}_n \in \calI(\calA)^+$, 
 in particular 
 $q_n^0$ forces that $\dot{Z}_n \setminus \bigcup F$
  is infinite.
 Let $\dot{C}$ be a $\mathbb{Q}$-name for the set 
 $\dot{Z}_n \setminus \bigcup F$. 

 By Corollary \ref{n+1 extension of stepB}, for 
 all $k \in \omega$ there is 
 $q^j \leq_{n+1} q_n^0$, where
 $q^j = (u, R_j)$ and $R_j = \langle r_i^j : i \in \omega \rangle $ such that 
 for all $v \subseteq n$, for all $i \geq n$ and 
 $r_i^j$-positive $s \subseteq \mathrm{int}(r_i^j)$, there is 
 $w \subseteq s$ such that 
 $(v \union w, R_j \setminus s)$ decides 
 $\dot{C}(j)$ and $j>k$. So there exists 
 $m_j \in \omega$ such that 
 $(v \union w, R_j \setminus s) \forces \dot{C}(j) = 
 \check{m}_j.$
 Note that if $m_j = \dot{C}(j)$, then $m_j \geq j > k$. 
 Therefore for all $k \in \omega$ there exists
 $m_j > k$ such that $m_j \in W_{n+1}$, witnessed by $q^j$ 
 as above, and moreover $m \not\in \bigcup F$ since 
 $q^j \forces \check{m}_j \not \in \bigcup F$. 
 Thus $W_{n+1} \setminus \bigcup F$ is infinite, and
 as $F \in [\calA]^{< \omega}$ was arbitrary this proves the claim. 
\end{proof}

\noindent
By assumption on the set $B$, there exists $m_{n+1} \in 
W_{n+1} \intersection B$ such that $m > n$. 
Let $r = (u, \langle t_i' : i \in \omega \rangle ) \leq_{n+1} 
q_n^0$ be given by $m_{n+1} \in W_{n+1}$, 
and define $q_{n+1}= 
(u, \langle t_i^{n+1} : i \in \omega \rangle)$ 
such that $t_i^{n+1} = t_i'$ for all $i \in \omega$. 
Since $r \leq_{n+1} q_n^0 \leq_{n+1} q_n$, we have that 
$q_{n+1} \leq_{n+1} q_n$. This completes the 
inductive construction.

Let $q= (u, T)$ be the fusion of the $q_n$'s 
(see Definition \ref{def fusion for Q}), so 
$T = \langle t_i : i \in \omega \rangle$ with $t_i = t_i^{i+1}$ for 
all $i \in \omega$. 
We show $q$ is $(M, \mathbb{Q})$-generic by showing 
that for all $n \in \omega$, 
the set $D_n \intersection M$ is predense below
$q$. Towards this end 
let $r = (v, R)$ be an arbitrary extension of 
$q$; as $D_n$ is dense there exists $ w \subseteq 
\mathrm{int}(R)$ such that 
$(v \union w, R \setminus \max(w)) \in D_n$. Then as
$(v \union w, R \setminus \max(w)) \leq (u, T_{n+1} \setminus
\max(w)) \leq q_{n+1}$ and $q_{n+1}$ is preprocessed for 
$D_n$ and $\max(w)$, already
$r' = (v \union w, T_{n+1} \setminus \max(w)) \in D_n$. 
Then $r' \in M$ and $r,r'$ are compatible, 
as witnessed by the condition $(v \union w, R \setminus \max(w))$. 

Next, we show $q \forces \card{\dot{Z}_n \intersection B} = 
\omega$ for every $n \in \omega$. Fix $n$, and let 
$(v, R)\leq q$ be arbitrary; it suffices to show that 
for every $k \in \omega$ there exists 
an extension of $(v,R)$ which forces 
$(\dot{Z}_n \intersection B) \setminus k \neq \emptyset$.
Find $i \in \omega$ such that 
$i > k$, $ v \subseteq i$, $\dot{Z}_n = \dot{Z}_i$ and 
$s = \mathrm{int}(R) \intersection 
\mathrm{int}(t_i)$ is $t_i$-positive.
The fact that $s$ is $t_i = t_i^{i+1}$-positive and 
$r \leq q \leq q_{i+1}$ implies, by item (3),
that there exists $w \subseteq s$ 
such that 
$$(v \union w, \langle t_j^{i+1} : j > \max(s) \rangle )
\forces m_{i+1} \in \dot{Z}_i,$$
where $m_{i+1} \in B$ and $m_{i+1} \geq i > k$. 
Then $(v \union w, R \setminus s)$ is an extension both of 
$(v,R)$ and of the condition
$(v \union w,
\langle t_j^{i+1} : j > \max(s) \rangle )$, so by the latter, 
forces $m_{i+1} \in (B \intersection \dot{Z}_n) \setminus k$.
This completes the proof that 
$q$ is an $(M, \mathbb{Q}, \calA, B)$-generic condition.
\end{proof}

The following is a straightforward density argument.
\begin{lemma} \label{Q adds unsplit}
Let $G$ be $\mathbb{Q}$-generic over $V$, let 
$\calS \subseteq \infsubsets$ be an element of $V$, 
and let $a = \set{s \subseteq \omega}[\exists T \ (s,T) \in 
G]$. Then for all $b \in \calS$, $(b$ does not split $a)^{V[G]}$. 
\end{lemma}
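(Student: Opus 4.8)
The goal is to show that the $\mathbb{Q}$-generic real $a$ is not split by any ground-model set, i.e.\ for each $b \in \calS \subseteq \infsubsets$ in $V$, at least one of $b \cap a$, $a \setminus b$ is finite. I will do this by a density argument, exactly as the lemma advertises. Fix $b \in \calS$ and an arbitrary condition $(u, T) \in \mathbb{Q}$ with $T = \langle t_i : i \in \omega\rangle$ and $t_i = (s_i, h_i)$. The plan is to find, for every $k \in \omega$, a dense set of conditions that "commit" all future elements of $a$ beyond $k$ to lie on one side of $b$. Concretely, I claim the set $E_b = \{(u',T') \in \mathbb{Q} : \mathrm{int}(T') \subseteq b \text{ or } \mathrm{int}(T') \cap b = \emptyset\}$ is dense.

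The key step is establishing this density. Given $(u,T)$, partition $\omega = b \cup (\omega \setminus b)$ into two pieces. Since the levels $\langle h_i(s_i) : i \in \omega\rangle$ are unbounded, a K\"onig's-lemma / pigeonhole argument — of precisely the kind invoked in the proof of Lemma \ref{stepA} to pass from $T$ to $T' \le T$ with $\mathrm{int}(T') \subseteq A_N$ — produces a pure extension $T' \le T$ with $\mathrm{int}(T')$ entirely contained in one of the two pieces. Indeed, for each $i$ write $s_i = (s_i \cap b) \cup (s_i \setminus b)$; by the logarithmic measure axiom (Definition \ref{def logarithmic measure}), one of $h_i(s_i \cap b)$, $h_i(s_i \setminus b)$ is $\ge h_i(s_i) - 1$. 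Infinitely many $i$ choose the same side, say $b$ (otherwise replace $b$ by $\omega \setminus b$ throughout); restricting to those $i$ and to the corresponding traces $s_i \cap b$ with $h_i$ restricted there gives a pure extension $(u, T')$ with $\mathrm{int}(T') \subseteq b$, whose levels are still unbounded and can be thinned to be strictly increasing. This $(u,T') \in E_b$ and $(u,T') \le (u,T)$, so $E_b$ is dense.

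Finally, I translate genericity into the conclusion. Let $G$ be $\mathbb{Q}$-generic over $V$ and pick $(u^*, T^*) \in G \cap E_b$. If $\mathrm{int}(T^*) \subseteq b$, then every $s \subseteq \omega$ with $(s, T'') \in G$ for some $T'' \le T^*$ satisfies $s \setminus u^* \subseteq \mathrm{int}(T^*) \subseteq b$ by clause (5) of Definition \ref{def creature}, since end-extensions only add points from $\mathrm{int}$ of the stem being refined; running down through $G$ below $(u^*,T^*)$, this shows $a \setminus u^* \subseteq b$, so $a \setminus b \subseteq u^*$ is finite. Symmetrically, if $\mathrm{int}(T^*) \cap b = \emptyset$ then $a \cap b \subseteq u^*$ is finite. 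Either way $b$ does not split $a$ in $V[G]$. The only place requiring genuine care is the density step, and there the main obstacle is bookkeeping the logarithmic measures: one must check that after passing to the trace on one side the resulting $(s_i \cap b, h_i \restriction [s_i \cap b]^{<\omega})$ are still finite logarithmic measures of unbounded level and verify conditions (6)--(7) of the order — but this is routine given Lemma \ref{measures in one partition piece lemma} and the non-atomicity conventions already in place.
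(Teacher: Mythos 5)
Your proof is correct and is exactly the ``straightforward density argument'' the paper alludes to (it gives no proof of Lemma \ref{Q adds unsplit}): the density of $E_b$ via the partition $\omega=b\cup(\omega\setminus b)$, the logarithmic-measure axiom giving a side of measure at least $h_i(s_i)-1$ for each $i$, pigeonhole to fix one side, and clause (5) of the extension relation to conclude that one of $a\cap b$, $a\setminus b$ is contained in the finite stem $u^*$. The bookkeeping you flag (restricted measures remain logarithmic, levels remain unbounded after thinning, conditions (6)--(7) hold with singleton blocks $B_i$) all checks out.
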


    \begin{theorem} [{\cite[Theorem 3.1]{Shelah84}}] \label{Theorem a < s}
    (\CH) Let  $\langle \mathbb{P}_\alpha, \dot{\mathbb{Q}}_\beta : 
   \alpha \leq \omega_2, \beta < \omega_2 \rangle$
   be a countable support iteration such that for all 
   $\alpha < \omega_2$, $\dot{\mathbb{Q}}_\alpha$ is a
   $\mathbb{P}_\alpha$-name for the partial order 
   $\mathbb{Q}$ of Definition \ref{def creature}. 
   Let $G$ be $\mathbb{P}_{\omega_2}$-generic over $V$. 
   Then 
   $V[G] \models \aleph_1 = \mathfrak{a} < 
      \mathfrak{s} = \aleph_2$.
    \end{theorem}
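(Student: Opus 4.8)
The plan is to combine the two combinatorial facts already isolated in this section — that $\mathbb{Q}$ strongly preserves tightness (Theorem \ref{Prop Q strongly preserves tightness}) and that $\mathbb{Q}$ adjoins a real unsplit by the reals of the ground model (Lemma \ref{Q adds unsplit}) — with the standard theory of countable support iterations of proper forcing. First I would pass, if necessary, to the extension of $V$ by $\Add(\aleph_1,\aleph_2)$, which adds no reals and preserves \CH, so that we may assume $2^{\aleph_1}=\aleph_2$. Since $\mathbb{Q}$ satisfies Axiom A it is proper, and $|\mathbb{Q}|=\mathfrak{c}$ in any model; hence, by the usual facts about countable support iterations of proper forcings of size $\le\mathfrak{c}$ over a model of \CH\ with $2^{\aleph_1}=\aleph_2$, the iteration $\mathbb{P}_{\omega_2}$ is proper, preserves $\aleph_1$, is $\aleph_2$-c.c., satisfies $|V[G_\alpha]\cap\omega^\omega|\le\aleph_1$ for every $\alpha<\omega_2$, and has the reflection property that every real of $V[G]$ already belongs to $V[G_\alpha]$ for some $\alpha<\omega_2$. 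Moreover each iterand adjoins a new real — for instance the unsplit real of Lemma \ref{Q adds unsplit}, which cannot lie in $V[G_\alpha]$ — so $\mathfrak{c}=\aleph_2$ in $V[G]$.

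\emph{The value of $\mathfrak{a}$.} By Proposition \ref{construction of shelahs mad family} I would fix in $V$ a tight mad family $\calA=\set{A_\alpha}[\alpha<\omega_1]$ and claim that $\mathbb{P}_{\omega_2}$ strongly preserves the tightness of $\calA$. By Lemma \ref{csi iterations strongly preserve tightness} it suffices to verify $\forces_{\mathbb{P}_\alpha}$``$\dot{\mathbb{Q}}_\alpha$ strongly preserves the tightness of $\check{\calA}$'' for every $\alpha<\omega_2$, which I would do by induction on $\alpha$. Granting this below $\alpha$, Lemma \ref{csi iterations strongly preserve tightness} applied to the initial segment $\mathbb{P}_\alpha$, together with the remark following Definition \ref{preserve def}, gives $\forces_{\mathbb{P}_\alpha}$``$\check{\calA}$ is a tight mad family''; then Theorem \ref{Prop Q strongly preserves tightness}, being a theorem of \ZFC\ and hence true in $V[G_\alpha]$, yields that $\mathbb{Q}$ strongly preserves the tightness of $\calA$ there, closing the induction. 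Applying Lemma \ref{csi iterations strongly preserve tightness} once more with $\gamma=\omega_2$, $\mathbb{P}_{\omega_2}$ strongly preserves the tightness of $\calA$, so by the remark after Definition \ref{preserve def} the family $\calA$ stays tight — in particular maximal — almost disjoint in $V[G]$. Since $\aleph_1$ is preserved, $|\calA|=\aleph_1$ in $V[G]$, whence $\mathfrak{a}^{V[G]}\le\aleph_1$; as $\aleph_1\le\mathfrak{a}$ in \ZFC, we get $\mathfrak{a}^{V[G]}=\aleph_1$.

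\emph{The value of $\mathfrak{s}$.} Let $\calS\in V[G]$ with $|\calS|\le\aleph_1$. Each $b\in\calS$ lies in $V[G_{\alpha_b}]$ for some $\alpha_b<\omega_2$ by the reflection property; since $\cf(\omega_2)=\omega_2>\aleph_1$, the ordinal $\alpha^*=\sup_{b\in\calS}\alpha_b$ is below $\omega_2$ and $\calS\subseteq V[G_{\alpha^*}]$. The stage-$\alpha^*$ generic real is $\mathbb{Q}$-generic over $V[G_{\alpha^*}]$, so by Lemma \ref{Q adds unsplit} applied inside $V[G_{\alpha^*}]$ no $b\in\calS$ splits it; hence $\calS$ is not a splitting family. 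Therefore $\mathfrak{s}^{V[G]}>\aleph_1$, and since $\mathfrak{s}\le\mathfrak{c}=\aleph_2$ in $V[G]$ we conclude $\mathfrak{s}^{V[G]}=\aleph_2$. Combining the two computations yields $V[G]\models\aleph_1=\mathfrak{a}<\mathfrak{s}=\aleph_2$.

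Essentially all of the content has been front-loaded into Theorem \ref{Prop Q strongly preserves tightness} and Lemma \ref{Q adds unsplit}, so the step I expect to require the most care is not any single deep point but the iteration bookkeeping: verifying the hypothesis of Lemma \ref{csi iterations strongly preserve tightness} by induction — which is exactly where one must know that $\calA$ remains tight stage by stage, so that each iterand genuinely strongly preserves \emph{its} tightness — and setting up the (routine but indispensable) reflection facts for proper countable support iterations used in both computations, including the $\aleph_2$-c.c. of $\mathbb{P}_{\omega_2}$.
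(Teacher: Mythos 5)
Your proposal is correct and follows essentially the same route as the paper: fix a tight mad family from \CH\ via Proposition \ref{construction of shelahs mad family}, keep it mad through the iteration using Theorem \ref{Prop Q strongly preserves tightness} together with Lemma \ref{csi iterations strongly preserve tightness} (the paper leaves your stage-by-stage induction implicit), and kill small splitting families by catching them at an intermediate stage and invoking Lemma \ref{Q adds unsplit}. Your extra care in arranging $2^{\aleph_1}=\aleph_2$ (so that $\mathfrak{s}\le\mathfrak{c}=\aleph_2$) and in reflecting $\calS$ real-by-real rather than as a set via the $\aleph_2$-c.c.\ are only cosmetic deviations.
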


\begin{proof}
Since $V \models $\textsf{CH}
we can fix a tight mad family $\calA \in V$.
Let $G$ be $\mathbb{P}_{\omega_2}$-generic over $V$. For all $\alpha < \omega_2$, by Theorem \ref{Prop Q strongly preserves tightness}, $\mathbb{P}_\alpha$ forces that
 $\dot{\mathbb{Q}}_\alpha$ is a proper forcing which 
 strongly preserves the tightness of $\calA$, so using 
 Lemma \ref{csi iterations strongly preserve tightness}, 
 $\mathbb{P}_{\omega_2}$ is proper and
preserves the tightness and hence maximality of 
$\calA$.
Therefore $\calA$ witnesses 
$(\mathfrak{a} = \aleph_1)^{V[G]}$.
Let $\calS \subseteq \infsubsets$ be family of cardinality
$< \aleph_2$. 
Since $\mathbb{P}_{\omega_2}$ is $\aleph_2$-cc (see \cite[Theorem 2.10]{Abraham2010}), there exists $\alpha < \omega_2$ such that 
$\calS \in V[G_\alpha]$, where 
$G_\alpha = G \intersection \mathbb{P}_\alpha$ is 
$\mathbb{P}_\alpha$-generic over $V$. By definition of 
$\dot{\mathbb{Q}}_\alpha$ and Lemma \ref{Q adds unsplit},
in $V[G_{\alpha + 1}]$, $\calS$ is not a splitting family,
so also this holds in $V[G]$. 
Therefore $(\mathfrak{s} = \aleph_2)^{V[G]}$. 
\end{proof}

\section{Sacks coding and tightness}
Recall that under $V=L$ there exists a $\Delta_2^1$ wellorder of the reals \cite{Godel39}; this complexity is optimal by the
Lebesgue measurability of analytic sets. Conversely, Mansfield's theorem states that if there 
exists a $\Sigma_2^1$ wellordering of the reals, then all 
reals are constructible.
Using a finite support iteration of
ccc forcings, Harrington \cite[Theorem B]{Harrington77} 
showed that a $\mathbf{\Delta}_3^1$
wellordering is consistent with $\neg$\textsf{CH}
and Martin's Axiom (\textsf{MA}); Friedman and Caicedo 
showed that the Bounded Proper Forcing Axiom 
(\textsf{BFPA}) and
$\omega_1 = \omega_1^L$ imply the existence of a 
$\Sigma_3^1$ wellorder of the reals \cite{CF2011}.
However in these last two constructions, the forcing
axioms rendered all cardinal characteristics equal 
to $\mathfrak{c}$. The question of projective wellorderings of the reals
in models with nontrivial structure of cardinal characteristics
of the continuum was first addressed by Fischer and Friedman
in 2010 \cite{FF2010}. Using a countable support iteration of 
$S$-proper forcing notions they showed a $\Delta_3^1$ wellorder
of the reals is compatible with 
$\mathfrak{c} = \aleph_2$ and each of the following 
inequalities: $\mathfrak{d} < \mathfrak{c}$, 
$\mathfrak{b} < \mathfrak{a} = \mathfrak{s}$, 
$\mathfrak{b} < \mathfrak{g}$. This was made
possible by defining a new forcing notion, \emph{Sacks coding},
which uses Sacks reals to code the wellorder and gives 
a way of forcing a $\Delta_3^1$ wellorder with an 
$\omega^\omega$-bounding iteration. 

Definable mad families of size $\mathfrak{c}$ can be obtained from 
a definable wellorder of $\mathbb{R}$, bringing into focus the study if the 
projective complexity of such families.
Mathias \cite{MATHIAS197759} was the
first to do this, when in 1969 he showed no 
analytic almost disjoint family can be maximal. Subsequent 
work revealed further tension between 
mad families and determinacy assumptions;
see, for example,  \cite{NN18}, \cite{Bakke_Haga_2021},
\cite{Tornquistdefinability}, and references therein. 
On the other hand, in $L$ there are $\Sigma_2^1$ mad families, as a recursive construction along the 
$\Sigma_2^1$-good wellorder $\leq_L$ of the 
constructible reals yields such a family. This was significantly improved by 
Miller \cite[Theorem 8.23]{miller1989}, 
who obtained a coanalytic mad family under $V=L$. 

It is interesting to ask whether 
one can obtain a cardinal preserving generic extension with
a $\Delta_3^1$ wellorder of the reals and $\neg$\CH , while simultaneously 
controlling values of cardinal characteristics as well 
as the definability of the witnesses to those characteristics.
In 2022
Bergfalk, Fischer, and Switzer established that a 
$\Delta_3^1$ wellorder of the reals is consistent with
$\mathfrak{a} = \mathfrak{u} < \mathfrak{i}$, 
$\mathfrak{a} = \mathfrak{i} < \mathfrak{u}$,
and $\mathfrak{a} < \mathfrak{u} = \mathfrak{i}$, with
the added feature that the witnesses to the 
cardinal characteristics of value $\aleph_1$ can 
be taken to be coanalytic. In particular, they show that 
Sacks coding 
strongly preserves tightness of {\emph{nicely definable}} 
tight mad families (\cite[Lemma 4.3]{bergfalk2022projective}). 
Making use of this result and  Theorem
\ref{Prop Q strongly preserves tightness} we obtain:

\begin{theorem} \label{delta 13 with a < s}
 It is consistent that $\mathfrak{a} = \aleph_1 < 
 \mathfrak{s} = \aleph_2$ and that there exists a 
 $\Delta_3^1$ definable wellorder of the reals, and a 
 $\Pi_1^1$ tight mad family of size $\aleph_1$. 
 \end{theorem}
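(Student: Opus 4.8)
The plan is to build a countable support iteration $\langle \mathbb{P}_\alpha, \dot{\mathbb{Q}}_\beta : \alpha \leq \omega_2, \beta < \omega_2 \rangle$ of length $\omega_2$ over a ground model of $V = L$ (or more modestly, of $\textsf{CH}$ plus enough of the Fischer--Friedman coding machinery), where at each stage $\alpha$ one alternates—along a suitable bookkeeping function—between iterands of two kinds: instances of the Sacks coding forcing of Fischer and Friedman from \cite{FF2010}, used to code into a fixed subset of $\omega_1$ an arbitrary real, thereby producing a $\Delta_3^1$ wellorder of the reals; and instances of Shelah's creature forcing $\mathbb{Q}$ of Definition \ref{def creature}, used to destroy all ground-model (and intermediate-model) splitting families of size $\aleph_1$ and thereby force $\mathfrak{s} = \aleph_2$ via Lemma \ref{Q adds unsplit} together with the $\aleph_2$-cc of the iteration. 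Both forcings are $S$-proper and $\omega^\omega$-bounding (resp.\ almost $\omega^\omega$-bounding), so the iteration is proper, preserves $\omega_1$, and has the $\aleph_2$-cc, and $\mathfrak{c} = \aleph_2$ holds in the extension.

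The next step is to fix, in the ground model, a \emph{nicely definable} (in the sense of \cite{bergfalk2022projective}) tight mad family $\calA$; under $V = L$ such a $\Pi^1_1$ tight mad family is available by the Miller-style construction of Proposition \ref{construction of shelahs mad family} carried out along $\leq_L$, arranged so that its definition is preserved by the coding (this is exactly the point of the ``nicely definable witnesses'' framework). One then argues, stage by stage, that each iterand strongly preserves the tightness of $\calA$ in the sense of Definition \ref{preserve def}: for the Sacks coding iterands this is \cite[Lemma 4.3]{bergfalk2022projective}, and for the $\mathbb{Q}$-iterands this is precisely our Theorem \ref{Prop Q strongly preserves tightness}. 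By the iteration theorem Lemma \ref{csi iterations strongly preserve tightness}, $\mathbb{P}_{\omega_2}$ strongly preserves the tightness of $\calA$; in particular $\calA$ remains tight, hence mad, in $V[G]$, so $\mathfrak{a} = \aleph_1$ is witnessed by $\calA$. Since $\mathfrak{b} \leq \mathfrak{a}$ we also get $\mathfrak{a} = \aleph_1 < \aleph_2 = \mathfrak{s}$, and the coding components deliver the $\Delta^1_3$ wellorder; a standard absoluteness argument (using that the iteration does not add fast reals and preserves the relevant localization/coding structure) shows the $\Pi^1_1$ definition of $\calA$ is upward absolute to $V[G]$, so $\calA$ is a $\Pi^1_1$ tight mad family of size $\aleph_1$ there.

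The main obstacle is the interaction between the two iterands: one must verify that interleaving $\mathbb{Q}$ with Sacks coding does not disturb either the coding (the almost-$\omega^\omega$-bounding behaviour of $\mathbb{Q}$ is what keeps the coding reals from being ``decoded'' prematurely, but one must check the combinatorial coding invariants are genuinely preserved by $\mathbb{Q}$-generic reals, not merely by $\omega^\omega$-bounding forcings) or the projective definition of $\calA$ (the $\Pi^1_1$-ness of $\calA$ must be robust under adding $\mathbb{Q}$-generics, which requires checking that $\mathbb{Q}$ is suitably \emph{Sacks-like} for the relevant $\Sigma^1_2$-absoluteness, e.g.\ that it does not add a real coding a branch through the relevant tree of attempted counterexamples). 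I expect this to reduce, as in \cite{FF2010} and \cite{bergfalk2022projective}, to showing $\mathbb{Q}$ has the requisite preservation properties (preservation of $\omega^\omega$-boundedness of the ground model, preservation of the coding predicate, and $S$-properness for the stationary set $S$ used in the coding), each of which is either already in the literature on $\mathbb{Q}$ (\cite{Shelah84}, \cite{Abraham2010}) or follows by a fusion argument parallel to the proof of Theorem \ref{Prop Q strongly preserves tightness}. Once these preservation facts are assembled, the theorem follows by the same template as \cite[Section 3]{FF2010}.
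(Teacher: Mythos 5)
Your overall architecture is exactly the paper's: a countable support iteration of length $\omega_2$ over $L$ interleaving Shelah's poset $\mathbb{Q}$ of Definition \ref{def creature} with the Fischer--Friedman coding blocks, with the new ingredient being that $\mathbb{Q}$ strongly preserves tightness (Theorem \ref{Prop Q strongly preserves tightness}), so that Lemma \ref{csi iterations strongly preserve tightness} together with \cite[Lemma 4.3]{bergfalk2022projective} keeps a ground-model coanalytic tight mad family tight, while Lemma \ref{Q adds unsplit} and the $\aleph_2$-cc give $\mathfrak{s}=\aleph_2$. Two points in your write-up need correcting, though.

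First, each coding block is not just Sacks coding: it is the three-step iteration of club shooting $Q(S_{\alpha+m})$ (Definition \ref{def club shooting}), localization $\mathcal{L}(\varphi_\alpha)$ (Definition \ref{localization forcing def}), and only then $C(Y_\alpha)$ (Definition \ref{def sacks coding}). The club-shooting and localization stages are what make the wellorder \emph{lightface} $\Delta_3^1$: the order is read off from the stationarity/nonstationarity pattern in the sequence $\vec S$ of Proposition \ref{statcostat sequence}, verified inside countable suitable models via David's trick, and the Sacks real merely transfers a subset of $\omega_1$ down to a real. Omitting these stages also invalidates your claim that ``the iteration is proper'': the club-shooting iterands are not proper, only $(\omega_1\setminus S)$-proper, and the full iteration is only $S_{-1}$-proper (Lemma \ref{Lemma full iteration adding delta13}). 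This still preserves $\omega_1$ and supports the fusion/preservation arguments, but the properness claim as stated is false, and one must additionally check (as the paper does) that the $S_{\alpha+m}$ with $m\notin\Delta(x\ast y)$ stay stationary -- which follows from $S$-properness, not from $\omega^\omega$-bounding.

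Second, your source for the $\Pi_1^1$ tight mad family is wrong: Proposition \ref{construction of shelahs mad family} is Shelah's \textsf{CH} construction, and running it along $\leq_L$ yields only a $\Sigma_2^1$ family. The paper instead invokes Lemma \ref{Lemma exists coanalytic tight} (\cite[Lemma 4.2]{bergfalk2022projective}), which combines the tightness construction with Miller-style coanalytic coding \cite{miller1989} to get a genuinely $\Pi_1^1$ family provably contained in $L$; that provable constructibility is also what makes \cite[Lemma 4.3]{bergfalk2022projective} applicable and makes the $\Pi_1^1$ definition persist to $V[G]$ by Shoenfield absoluteness. With these two repairs your proposal coincides with the proof given in Section 3.
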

The above answers \cite[Questions 7.$(1)\&(2)$]{FF2010}.
Our strategy in proving the above is  to 
force with a countable support iteration over $V=L$ 
of $S$-proper forcings which strongly 
preserve the tightness of a coanalytic tight mad family 
$\calA \in V$. 
Along the way we construct the
wellorder $<_G = \bigcup_{\alpha < \omega_2} <_\alpha$ 
by defining the initial segments 
$<_\alpha$, where 
$<_\alpha$ wellorders the reals of $L^{\mathbb{P}_\alpha}$
using a canonical wellordering via $\leq_L$ of 
$\mathbb{P}_\alpha$-names for reals. Using appropriate
bookeeping, at stage $\alpha$ we add a Sacks-generic 
real coding a pair of reals $x, y \in V^{\mathbb{P}_\alpha}$
such that $x <_\alpha y$. The way in which the 
$\alpha$-th generic real codes these intial stages of 
the iteration  provides 
the $\Delta_3^1$-definability of the wellorder. More precisely
at stage $\alpha$, 
the Sacks-generic real $r_\alpha$ will code
a  sequence 
$\vec{C}_\alpha = \langle C_{\alpha+m} : m \in \Delta(x \ast y)\rangle $
of generic clubs in 
$\omega_1$, as well as a set $Y \subset \omega_1$,
where $\Delta(x \ast y) \subseteq \omega$ is a recursive
coding of the pair $(x,y)$, thus encoding a 
a pattern  of stationary/nonstationary in apriori fixed sequence 
$\langle S_{\alpha +m } : m \in \omega \rangle \in L$, 
of stationary costationary subsets of $\omega_1^L$.
Note that, if $r_\alpha$ codes the pair
$(x,y)$, then
$L[r_\alpha] \models 
\Delta(x \ast y) \subseteq \set{m \in \omega}[
S_{\alpha + m} \text{ is nonstationary} ]$.
If we can guarantee that for  
no $m \not\in \Delta(x \ast y)$, 
$S_{\alpha + m}$ loses its stationarity in 
the final extension, 
then the stationarity-nonstationarity pattern 
encodes the wellorder $<_G$ as follows:
\begin{align*}
x <_G y \Leftrightarrow
\exists \alpha < \omega_2 \ L[r_\alpha] \models  & \Delta(x \ast 
y) = \\ 
&\set{m \in \omega}[
S_{\alpha + m}{\text{ is nonstationary}}]
\end{align*}
To make the latter a (lightface) projective definition
of $<_G$, 
the set $Y$ is added after $\vec{C}_\alpha$ 
in order to localize the generic nonstationarity 
of each $S_{\alpha+m}$ to a large class of countable transitive 
\textsf{ZF}$^-$
models (Definition \ref{definition suitable model} below); this uses what is sometimes referred to as 
``David's Trick''. Roughly, this allows us to bound 
the initial existential quantification to
$\omega_2^\calM$, where $\calM$ belongs to the
said class of countable transitive models, and 
$\calM$ contains the real $r_\alpha$. 
We proceed by introducing some key to our construction forcing notions.

\subsection{Club shooting}
Baumgartner, Harrington, and Kleinberg introduced  in \cite{BHK76}
a cardinal preserving 
forcing notion which, given a stationary costationary 
$S \subseteq \omega_1$, adds a closed unbounded 
$C \subseteq \omega_1$ such that $C \intersection S = 
\emptyset$. The forcing is often referred to as
\emph{club shooting}.

\begin{definition} \label{def club shooting} 
Let $S \subseteq \omega_1$ be a stationary costationary set. Define 
$Q(S)$ to be the partial order consisting of closed, bounded 
subsets of $\omega_1 \setminus S$, ordered by end-extension. 
\end{definition}

\begin{lemma}
The following hold: 
\begin{enumerate}
\item $Q(S)$ is $\omega$-distributive, thus does not add new 
reals.
\item Let $G$ be $Q(S)$-generic. Then
 $C_G = \bigcup \set{d \in Q(S)}[d \in G]$ is a 
 club in $\omega_1^{V[G]}$ witnessing $(\check{S} $ is nonstationary$)^{V[G]}$. 
\end{enumerate}
\end{lemma}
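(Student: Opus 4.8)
The plan is to verify the two standard facts about the poset $Q(S)$ where $S \subseteq \omega_1$ is stationary costationary. Part (2) is essentially immediate from the definitions, so I would handle it first: if $G$ is $Q(S)$-generic, set $C_G = \bigcup \set{d \in Q(S)}[d \in G]$. Since conditions are closed bounded subsets of $\omega_1 \setminus S$ ordered by end-extension, $C_G$ is increasing union of closed sets along a directed family and hence is closed below any of its limit points, and disjoint from $S$ by construction. For unboundedness, I would show that for every $\beta < \omega_1$ the set $E_\beta = \set{d \in Q(S)}[\max(d) > \beta]$ is dense: given any condition $d$, pick $\gamma \in \omega_1 \setminus S$ with $\gamma > \max\{\beta, \max(d)\}$ — such $\gamma$ exists because $S$ is costationary, in particular not co-bounded — and note $d \cup \{\gamma\}$ is a legitimate end-extension. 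By genericity $C_G$ meets every $E_\beta$, so $C_G$ is unbounded in $\omega_1^{V[G]}$. Finally, $C_G$ is a club disjoint from $S$, and since (by part (1)) no new reals and in fact no new countable sequences are added so $\omega_1$ is preserved, $S$ remains stationary-sized but is now avoided by a club, hence $(\check{S}$ is nonstationary$)^{V[G]}$.

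For part (1), the key is $\omega$-distributivity, i.e.\ that the intersection of countably many dense open sets is dense; equivalently $Q(S)$ adds no new $\omega$-sequences of ordinals, which in particular yields no new reals. Let $\langle D_n : n \in \omega\rangle$ be dense open and let $d_0 \in Q(S)$ be arbitrary; I would build a descending chain $d_0 \geq d_1 \geq d_2 \geq \cdots$ with $d_{n+1} \in D_n$, together with a strictly increasing sequence of countable ordinals $\alpha_n = \sup(d_n)$ — wait, more carefully, I would arrange $\max(d_{n+1}) > \alpha_n$ where $\alpha_n$ is chosen to dominate the construction so far, ensuring the $\alpha_n$ are cofinal in some limit $\delta = \sup_n \alpha_n < \omega_1$. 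The hoped-for lower bound would be $d_\omega := \bigcup_n d_n \cup \{\delta\}$. The obstacle — and this is the crux of the whole argument — is that $d_\omega$ is a condition only if $\delta \notin S$, which we cannot guarantee for an arbitrary choice of the $\alpha_n$.

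The resolution, and the step I expect to be the main obstacle, is a reflection/stationarity argument: one must choose the chain inside a countable elementary submodel. Fix $\theta$ large and a countable $N \prec H_\theta$ containing $Q(S)$, $S$, $d_0$, and $\langle D_n : n\in\omega\rangle$, and carry out the construction of $\langle d_n : n \in \omega\rangle$ inside $N$ (at each step choosing, via density and elementarity, $d_{n+1} \in D_n \cap N$ with $\max(d_{n+1})$ large). Then $\delta := N \cap \omega_1 = \sup_n \max(d_n)$. Because $S$ is stationary, the set of such $\delta$ ranging over all suitable countable models is stationary, but for a \emph{single} application we need only that we are \emph{permitted} to pick $N$ with $\delta = N\cap\omega_1 \notin S$: indeed, since $\omega_1 \setminus S$ is stationary (as $S$ is costationary), the club of $\delta$'s arising as $N\cap\omega_1$ for countable $N\prec H_\theta$ meets $\omega_1\setminus S$, so such an $N$ exists. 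With $\delta\notin S$, the set $d_\omega = \bigcup_n d_n \cup \{\delta\}$ is closed, bounded, disjoint from $S$, hence a condition below every $d_n$ and in particular in every $D_n$. This shows $\bigcap_n D_n$ is dense below $d_0$, and as $d_0$ was arbitrary, $Q(S)$ is $\omega$-distributive; consequently it adds no new reals, establishing (1).
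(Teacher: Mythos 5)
Your proof is correct and is exactly the standard club-shooting argument (elementary-submodel fusion with $N\cap\omega_1\notin S$, which exists since $\omega_1\setminus S$ is stationary) that the paper itself does not reproduce but delegates to Jech and Cummings. The one slightly glossed point — closedness of $C_G$ — is fine once you note that any limit point $\delta<\omega_1$ of $C_G$ is already a limit point of a single condition $d\in G$ with $\max(d)\geq\delta$, which exists by the same density argument you use for unboundedness.
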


\begin{proof}
See \cite[Chapter 25]{Jech} or \cite[Section 6]{Cummings2010}. 
\end{proof}

By item (2) above, $Q(S)$ is not a proper forcing notion, 
however, it still retains many of the desirable properties
of a proper forcing. 

\begin{definition} \label{Definition 
S-proper}
Let 
$S \subseteq \omega_1$ be a stationary set.
A forcing notion $\mathbb{P}$ is $S$-\emph{proper} 
if for all countable elementary submodels 
$M \prec H_\theta$, with 
$\theta$ sufficiently large, and such that 
$M \cap \omega_1 \in S$, for every $ p \in \mathbb{P} \intersection M$
there is $q \leq p$ which is $(M, \mathbb{P})$-generic.
\end{definition}

A proof of the following can be found in 
\cite[Theorem 3.7]{Goldsterntaste}
\begin{lemma} \hfill
\begin{enumerate}[(1)]
\item If $S \subseteq \omega_1$ is stationary and 
$\mathbb{P}$ is $S$-proper, then $\mathbb{P}$ preserves $\omega_1$ as well as 
the stationarity of any stationary subset of 
$S$. 
\item $Q(S)$ is $(\omega_1 \setminus S)$-proper.
\end{enumerate}
\end{lemma}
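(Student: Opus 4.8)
The plan is to run the two standard reflection arguments --- essentially the ones showing that proper forcings preserve $\omega_1$ and stationary sets --- but relativized to the stationary set $S$. For item~(1) I would first argue that $\omega_1$ is preserved: suppose toward a contradiction that some $p \in \mathbb{P}$ forces a name $\dot f$ to be a surjection from $\omega$ onto $\omega_1$; choose $\theta$ large and a countable $M \prec H_\theta$ with $\mathbb{P}, p, \dot f, S \in M$ and $\delta := M \cap \omega_1 \in S$ --- possible since the set of such $\delta$ is a club in $\omega_1$, hence meets $S$ --- and apply $S$-properness to obtain $q \leq p$ which is $(M,\mathbb{P})$-generic. For each $n$ the set $D_n$ of conditions deciding $\dot f(n)$ is dense and lies in $M$, so $D_n \cap M$ is predense below $q$; since every member of $D_n \cap M$ forces $\dot f(n)$ to be an ordinal in $M \cap \omega_1 = \delta$, we get $q \forces \ran(\dot f) \subseteq \delta < \omega_1$, contradicting that $p$ forces $\dot f$ onto $\omega_1$.

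Still for item~(1), fix a stationary $S' \subseteq S$; it suffices to show that for every $p \in \mathbb{P}$ and every $\mathbb{P}$-name $\dot C$ that $p$ forces to be a club in $\omega_1$, some $q \leq p$ forces $\dot C \cap \check{S'} \neq \emptyset$. Choose a countable $M \prec H_\theta$ with $\mathbb{P}, p, \dot C, S' \in M$ and $\delta := M \cap \omega_1 \in S'$, and, since $\delta \in S' \subseteq S$, use $S$-properness to obtain an $(M,\mathbb{P})$-generic $q \leq p$. A density argument --- for $\alpha \in M \cap \omega_1$ the set $\{ r : \exists \beta \in (\alpha,\omega_1)\ r \forces \check{\beta} \in \dot C \}$ is dense below $p$ and belongs to $M$, so every extension of $q$ meets it at a condition of $M$ --- shows that $q$ forces $\dot C \cap \delta$ to be unbounded in $\delta$; since $q$ also forces $\dot C$ to be closed and, by the first part, $\delta$ is still a countable limit ordinal, $q \forces \delta \in \dot C$, and as $\delta \in S'$ this gives $q \forces \dot C \cap \check{S'} \neq \emptyset$, as required.

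For item~(2), fix a countable $M \prec H_\theta$ with $Q(S), S \in M$ and $\delta := M \cap \omega_1 \notin S$, and a condition $p \in Q(S) \cap M$. Since $S$ is costationary, $\omega_1 \setminus S$ is unbounded, so any condition of $Q(S)$ extends to one with arbitrarily large maximum by adjoining a suitable point of $\omega_1 \setminus S$. Enumerate the dense subsets of $Q(S)$ lying in $M$ as $\langle D_n : n \in \omega \rangle$ and fix, in $V$, an increasing sequence $\langle \gamma_n : n \in \omega \rangle$ cofinal in $\delta$. I would build a decreasing sequence $p = p_0 \geq p_1 \geq p_2 \geq \cdots$ with each $p_n \in Q(S) \cap M$, $p_{n+1} \in D_n$, and $\max(p_{n+1}) > \gamma_n$: at stage $n$, since every ordinal below $\delta$ lies in $M$, elementarity lets me first extend $p_n$ inside $M$ to pass $\gamma_n$ and then further into the dense set $D_n$, staying inside $M$. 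Let $q := \bigl( \bigcup_n p_n \bigr) \cup \{ \delta \}$. The key verification is that $q \in Q(S)$: it is bounded, it is closed because its only new limit point is $\delta = \sup_n \max(p_n)$, which has been adjoined, and $q \subseteq \omega_1 \setminus S$ since each $p_n$ is and $\delta \notin S$ --- this is exactly where the hypothesis $M \cap \omega_1 \notin S$ is used. Then $q$ end-extends each $p_n$, so $q \leq p$, and $q$ is $(M,Q(S))$-generic because for every $n$ we have $q \leq p_{n+1} \in D_n \cap M$, so $D_n \cap M$ is predense below $q$.

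I do not anticipate a serious obstacle, as both parts are routine relativizations of the corresponding proper-forcing facts. The steps needing the most care are the bookkeeping forcing the relevant countable submodel to have $M \cap \omega_1$ in the prescribed stationary set, and, in item~(2), verifying that the fusion-type limit $\bigl( \bigcup_n p_n \bigr) \cup \{\delta\}$ is genuinely a condition of $Q(S)$ --- which hinges on choosing $M$ with $\delta \notin S$ together with the unboundedness of $\omega_1 \setminus S$ --- while simultaneously running the recursion so that each $p_n$ stays in $M$ and yet $\sup_n \max(p_n) = \delta$.
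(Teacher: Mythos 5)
Your proof is correct and is essentially the standard argument the paper defers to (it only cites Goldstern's \emph{A taste of proper forcing}, Theorem 3.7, rather than proving the lemma): reflection to a countable $M\prec H_\theta$ with $M\cap\omega_1$ landing in the appropriate stationary set for item (1), and the usual fusion of end-extensions through the dense sets of $M$, capped by $\delta=M\cap\omega_1\notin S$, for item (2). No gaps; the points you flag as delicate (meeting the prescribed stationary set, and closedness of $\bigl(\bigcup_n p_n\bigr)\cup\{\delta\}$) are exactly the right ones and are handled correctly.
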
  


\begin{lemma}  \label{preservation of S proper omega bounding} \hfill
\begin{enumerate}[(1)]
\item
Let $\langle  \mathbb{P}_\alpha, \dot{\mathbb{Q}}_\beta 
: \alpha \leq \delta, \beta < \delta \rangle$ 
be a countable support iteration such that for all $\alpha < \delta$, 
$\forces_{\mathbb{P}_\alpha}$``$ \dot{\mathbb{Q}}_\alpha$ is an $S$-proper poset''. 
Then $\mathbb{P}_\delta$ is $S$-proper. 
Moreover, if $\calA$ is a tight mad family
in the ground model and for all $\alpha < \omega_2$, 
$\forces_{\mathbb{P}_\alpha}$``$ \dot{\mathbb{Q}}_\alpha$ strongly preserves the tightness of 
$\check{\calA}$'', then also
$\mathbb{P}_\delta$ strongly preserves the tightness of 
$\calA$. 
\label{chain condition S proper}  \label{Lemma CH preserved by S proper}
\item  Assume \CH, and let $\langle \mathbb{P}_\alpha, \dot{\mathbb{Q}}_\beta 
 : \alpha \leq \delta, \beta < \delta \rangle$ 
 be a countable support
iteration of $S$-proper posets of length $\delta \leq \omega_2$, such that 
 for all $\alpha < \delta$, 
$\forces_{\mathbb{P}_\alpha}$``$ \card{\dot{\mathbb{Q}}_\alpha} 
= \omega_1$''.
Then $\mathbb{P}_\delta$ is $\aleph_2$-cc.  
If also $\delta < \omega_2$, then \textsf{CH} holds in 
$V^{\mathbb{P}_\delta}$.
\end{enumerate}
\end{lemma}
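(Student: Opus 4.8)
The plan is to establish Lemma~\ref{preservation of S proper omega bounding} in two parts, each following a standard template from the theory of countable support iterations of $S$-proper forcings, adapted so as to also carry along the strong preservation of tightness from Definition~\ref{preserve def}.

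\textbf{Part (1): $S$-properness and strong preservation are iterable.} The iterability of $S$-properness under countable support iterations is a routine variant of the classical preservation of properness (Shelah, see \cite{Shelahproperforcing} or \cite{Abraham2010}), where one restricts attention to countable elementary $M \prec H_\theta$ with $M \cap \omega_1 \in S$; I would either cite \cite{Goldsterntaste} or \cite{Goldsterntools} directly for this, or sketch the induction: one builds, by induction on $\beta \le \delta$, a sufficiently generic condition below a given $p \in \mathbb{P}_\delta \cap M$ by threading through an $M$-generic sequence, using at successor steps that $\dot{\mathbb{Q}}_\alpha$ is forced to be $S$-proper and at limit steps of cofinality $\omega$ the usual fusion argument with a cofinal $\omega$-chain of intermediate models. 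For the ``moreover'' clause about strong preservation of tightness, the point is that the argument establishing Lemma~\ref{csi iterations strongly preserve tightness} (\cite[Corollary 6.5]{GHT}) is itself a refinement of the properness-preservation induction, simultaneously maintaining, via the outer-hull machinery, that the built condition $q$ forces $B \cap \dot Z$ to be infinite for every $\dot Z \in (\mathcal I(\calA)^+ \cap M[\dot G])$; since each iterand is assumed to strongly preserve tightness, the same induction goes through when the models $M$ are additionally required to satisfy $M \cap \omega_1 \in S$. Essentially one runs the \cite{GHT} proof with ``$(M,\mathbb{P})$-generic'' everywhere replaced by ``$(M,\mathbb{P})$-generic for $M$ with $M\cap\omega_1 \in S$''. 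I would state this explicitly and refer to \cite{GHT} for the combinatorial core, remarking that $S$-properness does not interfere with the outer-hull argument.

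\textbf{Part (2): chain condition and preservation of \CH.} Assume \CH. That $\mathbb{P}_\delta$ is $\aleph_2$-cc is the standard $\Delta$-system argument for countable support iterations of length $\le\omega_2$ whose iterands are forced to have size $\le\omega_1$: under \CH\ each $\mathbb{P}_\alpha$ has a dense subset of size $\le\aleph_1$ (proved by induction on $\alpha$, using that there are only $\aleph_1^{\aleph_0}=\aleph_1$ nice names at each countable-support stage and $\aleph_1 \cdot \aleph_2^{<\omega_2}$-bookkeeping bounds), hence $|\mathbb{P}_\delta| \le \aleph_2$, and given $\aleph_2$ conditions one refines to an uncountable set whose supports form a $\Delta$-system with root $r$; since $|r|\le\omega$ and below the root there are only $\aleph_1$ possibilities under \CH, two conditions agree on the root and are therefore compatible. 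For preservation of \CH\ when $\delta<\omega_2$: the iteration adds no more than $\aleph_1$ reals, since any real in $V^{\mathbb{P}_\delta}$ has a nice name using countably many antichains, each of size $\le\aleph_1$ by the chain condition, so is determined by an element of a set of size $\aleph_1^{\aleph_0\cdot\aleph_1}$; one must bound this by $\aleph_2$ in general but by $\aleph_1$ when $\delta<\omega_2$ using that $|\mathbb{P}_\delta|\le\aleph_1$ in that case — more precisely, the dense-subset-of-size-$\aleph_1$ induction gives $|\mathbb{P}_\delta| = \aleph_1$ for $\delta<\omega_2$, whence there are only $\aleph_1$ nice names for reals and thus $2^{\aleph_0}=\aleph_1$ in $V^{\mathbb{P}_\delta}$.

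\textbf{Main obstacle.} The genuinely delicate point is the ``moreover'' clause of Part~(1): one must be certain that the \cite{GHT} preservation-of-tightness induction and the $S$-properness induction can be run \emph{simultaneously}, i.e.\ that requiring $M\cap\omega_1\in S$ does not obstruct the construction of the outer-hull-controlled generic condition at successor and limit stages. I expect this to go through essentially verbatim because the constraint $M\cap\omega_1\in S$ only affects which models are considered and the iterands are hypothesized to handle exactly those models, but one should check that the limit-stage fusion (which picks an increasing $\omega$-sequence of countable models) can be arranged with all models in the chain satisfying the $S$-condition, or rather that this is not even needed since only the bottom model $M$ must satisfy it while the auxiliary models used in the fusion need only be elementary submodels containing the relevant parameters. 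I would make this precise by quoting the exact form of the iteration theorem for $S$-proper forcing from \cite{Goldsterntaste} and observing that the tightness-preservation bookkeeping rides along without modification, then defer the detailed verification to the cited sources.
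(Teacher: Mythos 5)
Your proposal is correct and follows the same route as the paper, whose entire proof of this lemma is the single line ``Easy modification of corresponding properness arguments''; your sketch fills in exactly the standard inductions (the $S$-proper variant of the properness-preservation induction combined with the \cite{GHT} tightness bookkeeping of Lemma \ref{csi iterations strongly preserve tightness}, and the $\Delta$-system plus counting arguments for the chain condition). The only imprecision is in Part (2), where the step ``$|\mathbb{P}_\delta|=\aleph_1$ gives only $\aleph_1$ nice names for reals'' needs the usual countable-elementary-submodel refinement (a real in the extension is read off from a countable subset of $\mathbb{P}_\delta$ and the trace of the generic on it), since antichains in a proper non-ccc poset of size $\aleph_1$ may be uncountable and there are $2^{\aleph_1}$ of them; this is the standard fact that under \CH\ proper posets with a dense subset of size $\aleph_1$ preserve \CH.
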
 
\begin{proof}
Easy modification of corresponding properness arguments.
\end{proof}

\subsection{Localization}
The forcing notion known as Localization has its roots in 
Ren\'e David's work \cite{RDavid} on absolute $\Pi_2^1$
singletons.
Below we let
  \ZF$^-$ denote  \ZF\  without the Powerset Axiom.

\begin{definition}\label{definition suitable model}
A transitive model $\calM$ of \ZF$^-$ is called \emph{suitable} 
if $\omega_2^\calM$ exists and $\omega_2^\calM = \omega_2^{L^\calM}$.
\end{definition} 

Throughout the rest of this section we assume 
$V$ is a generic extension of $L$ via a cofinality preserving 
forcing extension. 

 \begin{definition} \label{localization forcing def}
 For $X \subseteq \omega_1$ and a $\Sigma_1$-sentence
 $\varphi(\omega_1, X)$ with parameters 
 $\omega_1$ and $X$ 
 such that $\varphi$ holds in all
 suitable models $\calM$ with $\omega_1, X \in \calM$,
 denote by $\mathcal{L}(\phi)$ the set of all functions 
 $r \from \card{r} \to 2$ where $\card{r} = 
 \mathrm{dom}(r)$ 
 is a countable limit ordinal, and such that: 
 \begin{enumerate}
 \item if $\gamma < \card{r}$ then $\gamma \in X$ if and only 
 if $r(2\gamma)= 1$; 
 \item if $\gamma \leq \card{r}$ and $\calM$ is a countable 
 suitable model such 
 that $\gamma = \omega_1^\calM$ and 
 $r \restriction \gamma \in \calM$, then 
 $\calM \models \varphi(\gamma, X \intersection \gamma)$. 
 \end{enumerate}

  The extension relation is end-extension. 
  
  \end{definition}
  
  Each $r \in \calL(\varphi)$ is an approximation 
  to the characteristic function of a subset $Y \subseteq 
  \omega_1$ such that 
  $\mathrm{Even}(Y) = \set{\gamma}[2\gamma \in Y] = X$. 
  The ``odd part'' of $r$, i.e. the values $r$ takes on 
  ordinals of the form $2\gamma +1$, is used for item 
  (1) below.
  
\begin{lemma} The following hold.
\begin{enumerate}
\item (\cite[Lemma 1]{FF2010}) \label{localization dense set}
For every $r \in \calL(\varphi)$ and countable limit ordinal 
$\gamma > \card{r}$, there exists $r' \leq r$ such that 
$\card{r'} = \gamma$. 

\item  (\cite[Lemma 2]{FF2010})\label{generic for localization} 
  If $G$ is $\mathcal{L}(\varphi)$-generic and $\calM$  is 
  a countable suitable model such that 
  $\bigcup G \restriction \omega_1^\calM \in \calM$,
  then $\calM \models \varphi(\omega_1^\calM, X \intersection 
  \omega_1^\calM)$. 
  
  \item (\cite[Lemma 3, Lemma 4]{FF2010})
  \label{Lemma localization is proper no new reals}
 $\mathcal{L}(\varphi)$
 is proper, and moreover 
does not add
 new reals. 
\end{enumerate}
\end{lemma}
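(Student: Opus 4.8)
The plan is to prove the three clauses of Lemma~\ref{Lemma localization is proper no new reals} essentially as in \cite{FF2010}, exploiting the defining property of $\varphi$: that it holds in \emph{every} suitable model containing $\omega_1$ and $X$. First I would observe that clause (1) of the previous lemma (genericity over suitable models) is already available, and that density of length extension (Lemma~\ref{localization dense set}) gives us, for any $\mathcal{L}(\varphi)$-generic $G$, a function $\bigcup G$ whose domain is $\omega_1$. The real content to verify here is that $\mathcal{L}(\varphi)$ is proper and adds no new reals.

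For \emph{no new reals}, I would argue $\mathcal{L}(\varphi)$ is $\omega$-distributive. Fix $p \in \mathcal{L}(\varphi)$, a name $\dot f$ for a function $\omega \to V$, and a countable elementary submodel $M \prec H_\theta$ with $p, \mathcal{L}(\varphi), \varphi, X \in M$; let $\gamma = M \cap \omega_1$, a countable limit ordinal. Build inside $M$ a decreasing $\omega$-chain $p = p_0 \geq p_1 \geq \cdots$ with $p_{n+1}$ deciding $\dot f(n)$ and $|p_n| \to \gamma$ (possible by Lemma~\ref{localization dense set} and elementarity, interleaving the bookkeeping). Let $q = \bigcup_n p_n$, a function with $\mathrm{dom}(q) = \gamma$. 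The crucial point is that $q \in \mathcal{L}(\varphi)$: condition (1) of Definition~\ref{localization forcing def} is inherited from the $p_n$; for condition (2) we must check all countable suitable $\calM$ with $\delta := \omega_1^\calM \leq \gamma$ and $q \restriction \delta \in \calM$. If $\delta < \gamma$ then $q \restriction \delta = p_n \restriction \delta$ for large $n$ and we inherit the condition from $p_n \in \mathcal{L}(\varphi)$; if $\delta = \gamma$ then $\calM$ is a suitable model with $\gamma = \omega_1^\calM$ — here we use that $X \cap \gamma$ is coded into $q$ via its even part, so $X \cap \gamma \in \calM$, and since $\gamma = \omega_1^\calM < \omega_1$ we get $\calM \models \varphi(\gamma, X \cap \gamma)$ from the fact that $\varphi$ holds in all suitable models (applied inside $\calM$, or rather: $\varphi$ is $\Sigma_1$ and true of $(\gamma, X \cap \gamma)$ in $V$, hence by suitability and $\Sigma_1$-absoluteness persists to $\calM$). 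Thus $q$ is a condition below all $p_n$ deciding every value $\dot f(n)$, so $p \not\forces \dot f \notin V$.

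For \emph{properness}, essentially the same construction works: given $M \prec H_\theta$ countable with the relevant parameters and $p \in \mathcal{L}(\varphi) \cap M$, I would enumerate the dense subsets of $\mathcal{L}(\varphi)$ lying in $M$ and build a decreasing $\omega$-chain $p_0 \geq p_1 \geq \cdots$ in $M$ meeting each such dense set, again with $|p_n| \to \gamma = M \cap \omega_1$; the fusion $q = \bigcup_n p_n$ is a condition by the same argument as above (this is where the ``$\delta = \gamma$'' case uses that $M \cap \omega_1 \in S$-type information is not needed — only that $X \cap \gamma$ and $q \restriction \gamma$ land in any suitable $\calM$ with $\omega_1^\calM = \gamma$), and $q$ is $(M, \mathcal{L}(\varphi))$-generic since it meets every dense set of $M$.

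The main obstacle is the verification that the fusion $q$ satisfies clause (2) of Definition~\ref{localization forcing def} at the critical level $\delta = \gamma$: one must be careful that the $\Sigma_1$-sentence $\varphi(\gamma, X \cap \gamma)$ genuinely holds in the countable suitable model $\calM$. This is where the hypothesis on $\varphi$ — that it holds in \emph{all} suitable models with $\omega_1, X$ as elements — is indispensable: $\calM$ is such a model (with $\gamma$ playing the role of $\omega_1^\calM$ and $X \cap \gamma$ the relevant parameter), and upward $\Sigma_1$-absoluteness together with suitability ($\omega_2^\calM = \omega_2^{L^\calM}$) guarantees the instance needed. I would also need the standard fact that there are ``enough'' suitable models — e.g.\ that for club-many $\gamma$ the relevant Skolem hull collapses to a suitable model — but for the membership-in-$\mathcal{L}(\varphi)$ check this is automatic since we are quantifying over all such $\calM$. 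Everything else is the routine $\omega$-chain construction familiar from David's trick, so I would cite \cite{FF2010} for the details.
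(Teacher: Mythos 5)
The paper itself offers no proof of this lemma (it simply defers to \cite{FF2010}), so your sketch stands or falls on its own, and it falls at exactly the point you yourself flag as ``the main obstacle'': the verification that the fusion $q=\bigcup_n p_n$, with $\card{q}=\gamma=M\intersection\omega_1$, satisfies clause (2) of Definition \ref{localization forcing def} at the new level $\delta=\gamma$. Your justification --- that $\varphi(\gamma, X\intersection\gamma)$ ``is true in $V$'' and ``persists to $\calM$ by $\Sigma_1$-absoluteness'' --- is wrong on both counts. First, the hypothesis on $\varphi$ concerns the single instance $\varphi(\omega_1,X)$ in suitable models containing the genuine $\omega_1$ and $X$ as elements (hence uncountable models); it says nothing about the reflected instances $\varphi(\gamma,X\intersection\gamma)$ for countable $\gamma$, which in the intended applications (e.g.\ ``$S_{\overline{\alpha}+m}$ is nonstationary'') are genuinely different assertions whose truth in a given countable $\calM$ is exactly what the forcing is designed to arrange. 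Second, $\Sigma_1$ statements are upward absolute from a transitive $\calM$ to $V$, not downward; to get $\calM\models\varphi(\gamma,X\intersection\gamma)$ one must place a witness inside $\calM$. Since clause (2) quantifies over \emph{all} countable suitable $\calM$ with $\omega_1^\calM=\gamma$ and $q\restriction\gamma\in\calM$, and the only object such an $\calM$ is guaranteed to contain is $q\restriction\gamma$ itself, the witness must be coded into $q$ --- this is precisely what the odd part of the conditions is for (as the paper notes right after Definition \ref{localization forcing def}), and it is the content of David's trick. Without interleaving that coding into the construction of the chain $\langle p_n : n\in\omega\rangle$, the union of an arbitrary decreasing chain with lengths converging to $\gamma$ need not be a condition, and both your distributivity and your properness arguments break at the same spot.

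Relatedly, you treat item (1) (density of length extension) as ``already available,'' but it is part of the statement being proved, and it is where the odd-part coding is actually carried out: one proves (1) by fixing, for each target level, a collapsed countable suitable model in which the local instance of $\varphi$ holds with some witness, and coding that witness into the odd part of the extension so that every suitable $\calM$ containing the restricted condition can decode it; item (2) then follows immediately from (1) plus genericity; and item (3) follows by the $\omega$-chain argument you describe, but with the chain built so that its union is a condition for the reason just given. Your closing remark that the membership check ``is automatic since we are quantifying over all such $\calM$'' has the logic reversed --- the universal quantifier over $\calM$ is exactly what makes the check nontrivial.
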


\subsubsection{Sacks Coding}
Sacks coding, or coding with perfect trees, was first defined by Fischer and Friedman \cite{FF2010}. Recall a 
tree $T \subseteq 2^{<\omega}$ is 
\emph{perfect} if for all $s \in T$ there exists 
an extension $t$ of $s$ such that $t \in T$ 
and $t^\frown (i) \in T$ for each $i < 2$. 
\emph{Sacks forcing} is the partial order consisting of perfect
trees $T \subseteq 2^{< \omega}$, and 
a condition $S$ extends a condition $T$ if 
$S$ is a subtree of $T$. 
 Sacks forcing is often viewed as a very minimally destructive
 way of forcing $\neg$\CH ; the Sacks coding forcing 
 defined below 
 can be seen as a minimally 
 destructive way of forcing a $\Delta_3^1$ wellorder
 in the presence of large continuum. 
Throughout this section we assume $V=L[Y]$, where
$Y \subseteq \omega_1$ is generic over $L$ for a 
cardinal preserving forcing notion.

   \begin{definition}
    Fix $n^* \in \omega$. By induction on $i < \omega_1$, 
     define a sequence 
     $\overline{\mu} = \langle \mu_i : i \in \omega_1 \rangle$
     such that 
     $\mu_i$ is an ordinal and 
     $(\card{\mu_i} = \aleph_0)^L$, for each $i < \omega_1$. 
      Let $\mu_0 = 
     \emptyset$, and 
     supposing $\langle \mu_j : j < i \rangle$ has been 
     defined, let $\mu_i$ be the least ordinal $\mu > 
     \sup_{j < i} \mu_j $ such that : 
     \begin{enumerate}
     \item $L_\mu[Y \intersection i] \prec_{\Sigma_{n^*}^1}
     L_{\omega_1}[Y \intersection i]$; 
     \item $L_\mu[Y \intersection i] \models $ \ZFminus; 
     \item $L_\mu[Y \intersection i] \models$ ``$\omega$ is the
     largest cardinal''. 
     \end{enumerate}
     \end{definition}

     Let $\calB_i := L_{\mu_i}[Y \intersection i]$.
     For $r \in 2^\omega$, we say that $r$ \emph{codes} $Y$ 
    \emph{below} $i$ if for all $j < i$, 
    $j \in Y \Leftrightarrow \underbrace{L_{\mu_j}[Y \intersection j][ r]}_{=\calB_j[r]}
    \models \textsf{ZF}^-$. 
        For a tree $T \subseteq 2^{<\omega}$, let 
    $\card{T}$ denote the least $i < \omega_1$ such that
    $T \in \calB_i$. 
    
      \begin{definition}{(\cite[Definition 2]{FF2010})} 
      \label{def sacks coding}
 \emph{Sacks coding} is the partial order 
  $C(Y)$ consisting of perfect trees $T \subseteq 2^{<\omega}$
  such that $r$ codes $Y$ below 
  $\card{T}$ whenever $r$ is a branch through $T$.
   For $T_0, T_1 \in C(Y)$, 
   let $T_1 \leq T_0$ if and only if
  $T_1$ is a subtree of $ T_0$. 
  \end{definition}

\begin{remark}
The hidden parameter $n^*$ above is needed for the preservation results 
pertaining to definable combinatorial objects, e.g. 
item (5) of Fact \ref{properties of C(Y)} below. 
Specifically, $n^*$ is chosen to an upper bound on 
the complexity of the formula expressing all relevant combinatorial
properties which we want to reflect down to the models 
$\calB_i$. In
the present case of preserving a $\Pi_1^1$ tight mad family, 
$n^* = 5$ is sufficient. See also \cite[Remark 1]{bergfalk2022projective}.

\end{remark}


  \begin{remark}
  Let $G$ be $C(Y)$-generic over $L[Y]$. Since the definition of 
  $C(Y)$ is absolute, it still holds in $L[Y][G]$ that 
  if $T$ is any condition in $C(Y)$ and $r$ is a branch 
  through $T$, in $L[Y][G]$,
  $$
  Y \intersection \card{T} = 
  \set{ j < \card{T}}[\calB_j[r{]} \models \mathsf{ZF}^-]$$
  
  In particular the above holds for all 
  $j <  \sup \set{ \card{T}}[T \in G]$ whenever $r \in \bigcap G$.
  \end{remark}
  
  \begin{fact} \label{properties of C(Y)} The following hold. 
  \begin{enumerate}
  \item (\cite[Lemma 4]{FF2010}) For any $j \in \omega_1$ and 
  any $T \in C(Y)$ with $\card{T} \leq j$, there exists 
  $T' \leq T$ such that $\card{T'} = j$. 
  \item (\cite[Lemma 7]{FF2010}) $C(Y)$ is proper.
  \item (\cite[Lemma 6]{FF2010}) Let $G$ be $C(Y)$-generic
  over $V$ and let $r := \bigcap G$. Then in $V[G]$, 
  $r$ codes the set $Y$ in the sense that for all $j < \omega_1$, 
  $j \in Y \Leftrightarrow \calB_j[r] \models \mathsf{ZF}^-.$ 
  \item (\cite[Lemma 8]{FF2010}) $C(Y)$ is 
  $\omega^\omega$-bounding.
  \item (\cite[Lemma 4.3]{bergfalk2022projective}) Countable 
  support iterations of $C(Y)$ 
  preserve the tightness of $\Pi_1^1$ tight mad families 
  which provably consist of constructible reals.
  \end{enumerate}
  
  \end{fact}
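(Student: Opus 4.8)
The plan is to establish all five items through the perfect-tree fusion machinery underlying Sacks forcing, adapted so that the thinning of trees simultaneously codes the set $Y$ bit by bit. First I would fix the standard Sacks apparatus: for a perfect $T$ and $n \in \omega$, let the $n$-th splitting level consist of the first $2^n$ splitting nodes, write $T \le_n S$ when $T \le S$ and the two trees agree up to the $n$-th splitting level, and observe that a decreasing $\le_n$-fusion sequence has a perfect intersection. The one new ingredient is that a branch $r$ through $T$ must code $Y$ below $\card{T}$, i.e.\ $j \in Y \Leftrightarrow \calB_j[r] \models \ZFminus$ for all $j < \card{T}$; since each $\calB_j = L_{\mu_j}[Y \intersection j]$ satisfies ``$\omega$ is the largest cardinal'', adjoining a branch $r$ either preserves or destroys $\ZFminus$ in $\calB_j[r]$ according to whether $r$ encodes a collapse of $\mu_j$, and this single bit can be controlled by passing to a suitable perfect subtree.

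For item (1) I would recurse on $k \in [\card{T}, j)$, at each step thinning the current tree to a perfect subtree all of whose branches code the correct bit ``$k \in Y$'' at level $k$, using the control just described; branches of the result automatically code $Y$ below $\card{T}$ since they are branches of $T \in C(Y)$, and I would carry out the construction inside $\calB_j$ so that the resulting perfect $T' \le T$ first appears at level $j$, giving $\card{T'} = j$. Item (3) is then a density consequence of (1): given $T$ and $j$, item (1) produces an extension with $\card{T} > j$, so the set $\{T : \card{T} > j\}$ is dense; hence for $G$ generic, $\sup\{\card{T} : T \in G\} = \omega_1$, and since $r = \bigcap G$ is a branch through every $T \in G$ it codes $Y$ below $\card{T}$ for each such $T$, hence below $\omega_1$, which is exactly the claimed equivalence for every $j$.

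Items (2) and (4) are the usual Sacks arguments run through the coding fusion. For properness, given countable $M \prec H_\theta$ with $C(Y), Y \in M$ and $T \in C(Y) \intersection M$, I would enumerate the dense sets of $M$ and build a $\le_n$-fusion sequence inside $M$ steering the $2^n$ nodes at the $n$-th splitting level into successive dense sets; the fusion $T'$ is a master condition, and it remains a condition because every branch of $T'$ is a branch of $T$, the coding thinning of item (1) being interleaved to maintain coding below the increasing values of $\card{\cdot}$. For $\omega^\omega$-bounding, given a name $\dot{f}$ for an element of $\omega^\omega$ I would, at the $n$-th fusion step, shrink below each of the $2^n$ nodes so as to decide $\dot{f}(n)$, making $\dot{f}(n)$ take one of finitely many values; the fusion then forces $\dot{f}$ below the ground-model function sending $n$ to the maximum of those values, again interleaving the coding thinning.

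Item (5) is the genuine obstacle. By Lemma \ref{csi iterations strongly preserve tightness} it suffices to show a single $C(Y)$ strongly preserves the tightness of $\calA$ in the sense of Definition \ref{preserve def}, so I would run an outer-hull argument parallel to the one in Theorem \ref{Prop Q strongly preserves tightness}: given $M$, a set $B \in \calI(\calA)$ meeting every $Y \in \calI(\calA)^+ \intersection M$ infinitely, and a name $\dot{Z}$ for an element of $\calI(\calA)^+$, I would build a fusion master condition $T'$ together with, at each splitting node, a commitment forcing some element of $B$ into $\dot{Z}$, using Fact \ref{outer hull in filter fact} to guarantee the relevant outer hulls lie in $\calI(\calA)^+$ and hence meet $B$. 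The hard part, and the reason for the hypothesis on $\calA$, is that this tightness-fusion must be carried out simultaneously with the coding thinning of items (1)--(4) within a single perfect-tree construction, so that the outer hulls are computed compatibly with the requirement that branches code $Y$. This is precisely where ``$\calA$ is $\Pi^1_1$ and provably consists of constructible reals'' enters: it makes the defining formula of $\calA$ absolute and reflecting down to the coding models $\calB_i$ (which is the role of the parameter $n^*$), so that the combinatorial demands of tightness preservation can be met inside the very models in which the coding is arranged.
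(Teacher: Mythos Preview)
The paper itself does not prove Fact~\ref{properties of C(Y)}; each item is cited without argument from \cite{FF2010} or \cite{bergfalk2022projective}, so there is no in-paper proof to compare against. Your sketches for items (1)--(4) are the standard Sacks-fusion arguments adapted to the coding setting and match those references.

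For item (5) your outline is also correct in broad strokes: reduce via Lemma~\ref{csi iterations strongly preserve tightness} to strong preservation by a single $C(Y)$, then run a fusion interleaved with an outer-hull argument in the style of Theorem~\ref{Prop Q strongly preserves tightness}, with the $\Pi^1_1$-and-provably-constructible hypothesis used to reflect the relevant properties of $\calA$ down to the models $\calB_i$. One point you leave implicit, and which is the actual crux of \cite[Lemma~4.3]{bergfalk2022projective}: ordinary Sacks forcing already strongly preserves the tightness of \emph{every} tight mad family, so the definability hypothesis is not needed for the outer-hull step per se but rather to guarantee that the fusion $T^* = \bigcap_n T_n$ lands in $C(Y)$. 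That requires $|T^*| \le j := M\cap\omega_1$, i.e.\ $T^*\in\calB_j$, which in turn needs the recipe producing $\langle T_n\rangle$---including the selections made using membership in $\calI(\calA)^+$---to be replayable inside $\calB_j$. The $\Sigma^1_{n^*}$-elementarity built into the definition of the $\calB_i$, together with the $\Pi^1_1$ definition of $\calA$ and the hypothesis $\calA\subseteq L$, is exactly what makes those selections expressible there. Your final sentence says this, but somewhat elliptically; the cited proof turns on making this reflection precise at that specific step rather than on a generic interleaving of coding with the outer hull.
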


 \subsection{A $\Delta_3^1$ long wellorder}
 With the ingredients provided by the previous sections we
 proceed with the proof of
  Theorem \ref{delta 13 with a < s}. 
  Recall that $\lozenge$ is the assertion that there exists 
  a sequence $\vec{A} = \langle A_\alpha : \xi < \omega_1 \rangle $
  where
  $A_\xi \subseteq \xi$
  for each $\xi < \omega_1$ and for any $X \subseteq \omega_1$, 
  the set $\set{\xi < \omega_1}[X \intersection \xi = A_\xi]$ is stationary. $\vec{A}$ is called a $\lozenge$-sequence, and 
  such a sequence can be constructed in $L$ so that
  the sequence is $\Sigma_1$-definable over $L_{\omega_1}$; 
  see, for example, \cite[Theorem 3.3]{Devlin}.

  \begin{proposition}[{\cite[Lemma 14]{FF2010}}] \label{statcostat sequence}
  Assume $\lozenge$ holds. Then there exists a sequence
  $\vec{S} = \langle S_\alpha : \alpha < \omega_2 \rangle$
  which is $\Sigma_1$-definable over $L_{\omega_2}$
  consisting
  of sets $S_\alpha \subseteq \omega_1$ 
  that are stationary costationary in $L$, and 
  are almost disjoint in the sense that 
  $\card{S_\alpha \intersection S_\beta} < \omega_1$ for 
  all distinct $\alpha, \beta < \omega_2$.
  Furthermore there exists a stationary 
  $S_{-1} \subseteq \omega_1$ such that
  $S_{-1} \intersection S = \emptyset$ for all
  $S \in \vec{S}$. 
  Moreover, if $\calM$, $\calN$ are suitable models such that 
  $\omega_1^\calM = \omega_1^\calN$, then
   $\vec{S}^\calM$ and $\vec{S}^\calN$ coincide 
  on $\omega_2^\calM \intersection \omega_2^\calN$. If  
  $\calM$ is suitable with $\omega_1^\calM = \omega_1$, 
  then $\vec{S}^\calM = \vec{S} \restriction \omega_2^\calM$. 
  
  \end{proposition}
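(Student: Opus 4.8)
The plan is to carry out the construction inside $L$, where $\lozenge$ holds and the $\Sigma_1$-definable-over-$L_{\omega_1}$ $\lozenge$-sequence $\vec{A} = \langle A_\xi : \xi < \omega_1 \rangle$ of \cite[Theorem 3.3]{Devlin} is available; since the assertions about $\vec{S}$ concern $L$ and are $\Sigma_1$-expressible, they persist in $V$. First I would fix a canonical enumeration $\langle T_\alpha : \alpha < \omega_2 \rangle$ of the subsets of $\omega_1 \setminus \set{0}$ in $L$, namely $T_\alpha :=$ the $<_L$-least subset of $\omega_1 \setminus \set{0}$ different from every $T_\beta$ with $\beta < \alpha$; since $(2^{\omega_1})^L = \omega_2$ this enumerates all of them, and for $\alpha < \omega_2$ the sequence $\langle T_\beta : \beta \leq \alpha \rangle$ is produced by a $\Sigma_1$-recursion terminating at a level $L_\gamma$ with $\gamma < \omega_2$, so the relation ``$T = T_\alpha$'' — and hence the sequence $\vec{T}$ — is $\Sigma_1$-definable over $L_{\omega_2}$ without parameters ($\omega_1$ being definable there as its largest cardinal). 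Then put
\[
 S_\alpha := \set{\xi < \omega_1}[A_\xi = T_\alpha \intersection \xi] \quad (\alpha < \omega_2), \qquad
 S_{-1} := \set{\xi < \omega_1}[A_\xi = \set{0}].
\]

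The required properties are then short consequences of $\lozenge$. Since each $T_\alpha$ (and $\set{0}$) lies in $L$, $\lozenge$ gives that each $S_\alpha$ and $S_{-1}$ is stationary in $L$. As $0 \notin T_\alpha$, no $\xi$ can satisfy $A_\xi = \set{0} = T_\alpha \intersection \xi$, so $S_{-1} \intersection S_\alpha = \emptyset$; this yields both the disjointness clause and the costationarity of each $S_\alpha$ (as $S_{-1} \subseteq \omega_1 \setminus S_\alpha$). For almost-disjointness, if $\xi \in S_\alpha \intersection S_\beta$ with $\alpha \neq \beta$, then $T_\alpha \intersection \xi = A_\xi = T_\beta \intersection \xi$, whence $\xi \leq \min(T_\alpha \triangle T_\beta) < \omega_1$; so $S_\alpha \intersection S_\beta$ is bounded in $\omega_1$, in particular of size $< \omega_1$, and the $S_\alpha$ are pairwise distinct (two stationary sets with bounded intersection cannot coincide). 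Finally, ``$\xi \in S_\alpha$'' asserts the existence of a witness inside $L_{\omega_2}$, namely $A_\xi$ together with $T_\alpha$, satisfying $A_\xi = T_\alpha \intersection \xi$, so $\vec{S}$ is $\Sigma_1$-definable over $L_{\omega_2}$ (with $S_{-1}$ definable outright).

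For the coherence clauses, let $\varphi$ be the $\Sigma_1$ formula defining $\vec{S}$ over $L_{\omega_2}$. If $\calM$ is suitable then $L^\calM = L_{\mathrm{Ord} \cap \calM}$ and, by $\omega_2^\calM = \omega_2^{L^\calM}$ together with the absoluteness of the $L$-hierarchy, $L_{\omega_2^\calM}^{\calM} = L_{\omega_2^\calM}$ (the genuine level); moreover $\calM$ computes $\vec{A}$, the $T_\alpha$'s, and hence $\vec{S}^\calM$ as the genuine restrictions, since these are themselves fixed by $\Sigma_1$ formulas over appropriate $L_\delta$'s that the correct computation of $L_{\omega_2^\calM}$ reproduces. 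Thus $\vec{S}^\calM$ is the sequence that $\varphi$ defines over $L_{\omega_2^\calM}$, and since this definition only consults $L$-levels below $\omega_2^\calM$ when producing $S_\alpha$ for $\alpha < \omega_2^\calM$, any two suitable $\calM, \calN$ with $\omega_1^\calM = \omega_1^\calN$ agree on $\omega_2^\calM \intersection \omega_2^\calN$, and if $\omega_1^\calM = \omega_1$ then $\vec{S}^\calM = \vec{S} \restr \omega_2^\calM$.

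I expect the only real obstacle to be this last point: not the combinatorics (stationarity, costationarity and almost-disjointness are immediate from $\lozenge$ and the choice of the $T_\alpha$'s), but the verification that $\vec{A}$, the enumeration $\vec{T}$, and therefore $\vec{S}$, are $\Sigma_1$-definable over $L_{\omega_2}$ in a way that is \emph{absolute} to the countable suitable models, so that each such $\calM$ sees the genuine restrictions. This is precisely the role of the requirement $\omega_2^\calM = \omega_2^{L^\calM}$ in the definition of suitability, together with the $\Sigma_1$-definability of the $\lozenge$-sequence over $L_{\omega_1}$; pinning down these alignments exactly — in particular the interaction with $\omega_1^\calM$ — is the delicate part.
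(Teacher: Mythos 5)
The paper does not prove this proposition --- it is imported verbatim as \cite[Lemma 14]{FF2010} --- but your construction ($S_\alpha=\{\xi: A_\xi=T_\alpha\cap\xi\}$ for a canonical $<_L$-enumeration $\vec T$ of $\mathcal P(\omega_1\setminus\{0\})^L$, with $S_{-1}=\{\xi:A_\xi=\{0\}\}$ forcing disjointness and costationarity) is exactly the standard argument behind that lemma, and all the combinatorial verifications you give are correct. The only point worth pinning down in the coherence clause is that suitability ($\omega_2^{\calM}=\omega_2^{L^{\calM}}$) already forces $\omega_1^{\calM}=\omega_1^{L^{\calM}}$ (since $\omega_1^{\calM}$ is an $L^{\calM}$-cardinal strictly between $\omega^{\calM}$ and $\omega_2^{L^{\calM}}$), so the parameter $\omega_1$ in the $\Sigma_1$ definition over $L_{\omega_2^{\calM}}$ is computed correctly and the agreement of $\vec S^{\calM}$ and $\vec S^{\calN}$ on $\omega_2^{\calM}\cap\omega_2^{\calN}$ follows as you say from $\mathcal P(\omega_1^{\calM})\cap L_{\omega_2^{\calM}}$ being a $<_L$-initial segment of $\mathcal P(\omega_1^{\calN})\cap L_{\omega_2^{\calN}}$.
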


We assume $V=L$, and therefore fix $\vec{S}$ and 
$S_{-1}$ as above.

\begin{lemma}[{\cite[Lemma 14]{FF2010}}]
There exists $F \from \omega_2 \to L_{\omega_2}$ such that for all $a \in L_{\omega_2}$, $F^{-1}(a)$ is unbounded
in $\omega_2$, and $F$ is $\Sigma_1$-definable over 
$L_{\omega_2}$. 
Moreover, if $\calM$, $\calN$ are suitable models such that 
  $\omega_1^\calM = \omega_1^\calN$, then
   $F^\calM$ and $F^\calN$ coincide 
  on $\omega_2^\calM \intersection \omega_2^\calN$. If  
  $\calM$ is suitable with $\omega_1^\calM = \omega_1$, 
  then $F^\calM = F \restriction \omega_2^\calM$. 
\end{lemma}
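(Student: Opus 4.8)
The plan is to follow the template of Proposition~\ref{statcostat sequence}, replacing the canonical almost disjoint sequence of stationary sets by a canonical definable surjection of $\omega_2$ onto $L_{\omega_2}$ extracted from the constructible wellorder. The two ingredients are both absolute: the $\Sigma_1$ wellorder $<_L$ of $L$, and the G\"odel pairing function $\Gamma$ on pairs of ordinals, with inverse coordinate maps $\Gamma_0,\Gamma_1$ (so $\Gamma(\Gamma_0(\al),\Gamma_1(\al))=\al$). I will use three standard properties of $\Gamma$: it is absolutely $\Sigma_1$-definable; $\Gamma\restr(\ka\times\ka)$ is a bijection onto $\ka$ for every infinite cardinal $\ka$; and $\Gamma(\xi,\eta)\geq\eta$ for all ordinals $\xi,\eta$.

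First I would build a canonical bijection $e\from\omega_2\to L_{\omega_2}$. Since $V=L$, GCH holds, so every proper initial segment $L_\de$ with $\de<\omega_2$ has cardinality at most $\aleph_1$; moreover, because $<_L$ ranks by constructibility level, $<_L\restr L_\de$ is an initial segment of $<_L\restr L_{\omega_2}$, of order type some ordinal below $\omega_2$. As $L_{\omega_2}$ is the increasing union of the $L_\de$ and has cardinality $\aleph_2$, the wellorder $<_L\restr L_{\omega_2}$ has order type exactly $\omega_2$ (it is both $\geq\omega_2$ and a supremum of $\omega_2$-many ordinals below $\omega_2$, hence $\leq\omega_2$ by regularity of $\omega_2$). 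I then let $e(\al)$ be the unique $x\in L_{\omega_2}$ with $\ot(\set{y}[y<_L x])=\al$; this is a bijection, and it is $\Sigma_1$-definable over $L_{\omega_2}$ with no parameters.

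Next I would put $F(\al):=e(\Gamma_0(\al))$ for $\al<\omega_2$. Then $F$ is $\Sigma_1$-definable over $L_{\omega_2}$, being a composition of the $\Sigma_1$-definable maps $e$ and $\Gamma_0$; it is onto $L_{\omega_2}$ because $e$ is; and for each $a\in L_{\omega_2}$, writing $\xi_a=e^{-1}(a)$, we get $F^{-1}(a)=\set{\Gamma(\xi_a,\eta)}[\eta<\omega_2]$ since $\Gamma\restr(\omega_2\times\omega_2)$ is a bijection onto $\omega_2$; as $\Gamma(\xi_a,\eta)\geq\eta$, this set is unbounded in $\omega_2$, which gives the first assertion.

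For the ``moreover'' clauses I would argue exactly as in Proposition~\ref{statcostat sequence}. The definition of $F$ mentions only the absolute predicates $<_L$, $\ot$ and $\Gamma$, so inside a suitable model $\calM$ it is evaluated over the initial segment $L_{\omega_2^\calM}$ of the true $L$, with $\mathrm{Ord}\cap L_{\omega_2^\calM}=\omega_2^\calM$. Because $\calM$ is suitable, $\omega_2^\calM=\omega_2^{L^\calM}$, so inside $L^\calM$ the condensation-based cardinal arithmetic of $L$ gives $|L_\de|\leq\aleph_1$ for all $\de<\omega_2^\calM$; hence the order-type computation of the previous paragraph relativizes and $\ot(<_L\restr L_{\omega_2^\calM})=\omega_2^\calM$. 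Consequently $e^\calM$ is total on $\omega_2^\calM$ and, being read off absolute data, equals $e\restr\omega_2^\calM$, whence $F^\calM=F\restr\omega_2^\calM$. In particular any two suitable $\calM,\calN$ have $F^\calM$ and $F^\calN$ agreeing on $\omega_2^\calM\cap\omega_2^\calN$, and $F^\calM=F\restr\omega_2^\calM$ for every suitable $\calM$. (For the parameterised form used for $\vec S$, one simply lists $F$ as $\Sigma_1(\omega_1)$; inside $\calM$ the parameter is then read as $\omega_1^\calM$, which is where the hypothesis $\omega_1^\calM=\omega_1^\calN$ enters.) I expect the one subtle point to be exactly this relativization — checking that the cardinal arithmetic of $L$ reflects down correctly to the countable suitable models, so that $\dom(F^\calM)=\omega_2^\calM$ — since this is the locality that makes David's trick go through in the later sections.
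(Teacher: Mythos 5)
Your construction is correct and is essentially the standard one behind the cited result: the paper itself gives no proof (it simply quotes \cite[Lemma 14]{FF2010}), and the argument there likewise composes the canonical $\Sigma_1$ enumeration of $L_{\omega_2}$ in $<_L$-order (of order type exactly $\omega_2$, by the cardinality computation you give) with G\"odel pairing to make every fibre unbounded, with the agreement between suitable models following from the absoluteness of $<_L$, of order types, and of the pairing function, together with suitability guaranteeing $\ot(<_L\restriction L_{\omega_2^{\calM}})=\omega_2^{\calM}$. Your closing observation is also accurate: for $F$ the hypothesis $\omega_1^{\calM}=\omega_1^{\calN}$ is not actually needed (it is inherited from the statement for $\vec S$, where the parameter $\omega_1$ genuinely enters), so you prove a slightly stronger statement than asserted.
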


Fix $F$ as above. By Fact \ref{properties of C(Y)}, item (5), tight mad families are 
indestructible by countable support iterations of $C(Y)$ when
they satisfy an extra definability assumption; such a mad
family can be constructed in $L$ by the following.
 
\begin{lemma}[{\cite[Lemma 4.2]{bergfalk2022projective}}]
\label{Lemma exists coanalytic tight}
If $V=L$ then there is a $\Pi_1^1$ tight mad family $\calA$
 such that \ZFC\
proves $\calA$ is a subset of $L$.
\end{lemma}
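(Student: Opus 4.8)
The plan is to run the recursive construction of a tight mad family from Proposition \ref{construction of shelahs mad family} inside $L$, but arranged along the canonical wellorder $<_L$ of the constructible reals and with an extra coding step, so that membership in the resulting family $\calA$ becomes witnessed by countable transitive models.

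First I would fix a $\Sigma_1$-over-$L_{\omega_1}$ enumeration $\langle e_\alpha : \alpha < \omega_1\rangle$ of all reals together with a $\Sigma_1$-definable bookkeeping function that, at stage $\alpha$, assigns one of two tasks: either (i) if $e_\alpha$ is $\calI(\{A_\beta : \beta < \alpha\})$-positive, guarantee that the set about to be added meets it infinitely (madness); or (ii) given a code for a sequence $\langle B_n : n \in \omega\rangle$ of pairwise disjoint nonempty finite sets satisfying the hypothesis displayed in Proposition \ref{construction of shelahs mad family}, guarantee that some member of $\calA$ contains infinitely many of the $B_n$ while meeting only finitely much of any prescribed finite union of previously chosen sets (tightness). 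Exactly as in Proposition \ref{construction of shelahs mad family}, the limit family $\calA = \{A_\alpha : \alpha < \omega_1\}$ is then tight and mad; since the entire recursion is carried out in $L$ we automatically have $\calA \subseteq L$, and since the recipe is absolute this containment is provable in $\ZFC$.

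The additional ingredient needed to make $\calA$ coanalytic is a coding step: when $A_\alpha$ is chosen, partition $\omega$ into infinitely many infinite pieces, reserve one of them to encode a real $c_\alpha$ coding the pair $(\alpha, \langle A_\beta : \beta \le \alpha\rangle)$ (for instance by an almost-disjoint coding against a fixed $\Sigma_1$-definable almost disjoint family), and use the remaining pieces to carry out task (i) or (ii); the freedom left in the construction of Proposition \ref{construction of shelahs mad family} is ample enough to do both simultaneously. With this bookkeeping in place, $x \in \calA$ if and only if for every countable transitive model $M$ of a sufficiently large finite fragment of $\mathsf{ZF}^- + (V=L)$ with $x \in M$ that correctly computes the $L$-hierarchy up to $\omega_1^M$, $M$ satisfies ``$x$ was enumerated into the family built by the recursion''; combined with the dual existence statement for the genuine $A_\alpha$'s, this yields a lightface $\Pi^1_1$ description of $\calA$ by the now-standard technique for producing coanalytic witnesses going back to Miller \cite[Theorem 8.23]{miller1989}. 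The same decoding also shows, provably in $\ZFC$, that any $x$ satisfying the displayed formula lies in $L_\gamma$ for $\gamma$ the ordinal height of such an $M$, so $\calA \subseteq L$.

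I expect the main obstacle to be verifying that the coding is compatible with the tightness demands: after the piece of $\omega$ carrying $c_\alpha$ has been set aside, one must check that clauses (1) and (2) of Proposition \ref{construction of shelahs mad family} can still be satisfied — i.e. that the reflection bookkeeping does not obstruct the diagonalization — and, on the descriptive side, that the combinatorial properties being diagonalized reflect faithfully to the countable models $M$, so that $M$'s verdict about $x$ is reliable. This is delicate but essentially routine; it is exactly the content of \cite[Lemma 4.2]{bergfalk2022projective}, and for the details I would follow Miller's coanalytic-mad-family argument adapted to the tight construction as carried out there.
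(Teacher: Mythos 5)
The paper does not actually prove this lemma---it is imported verbatim from \cite[Lemma 4.2]{bergfalk2022projective}---and your overall plan (run the tight-mad-family recursion of Proposition \ref{construction of shelahs mad family} inside $L$ along $<_L$, arrange that each $A_\alpha$ codes the pair $(\alpha,\langle A_\beta:\beta\le\alpha\rangle)$, and then invoke Miller-style reflection to lower the complexity from $\Sigma^1_2$ to $\Pi^1_1$) is exactly the standard route that the cited lemma follows. The combinatorial half of your sketch is fine: under $V=L$ the diagonalization of Proposition \ref{construction of shelahs mad family} has enough slack to absorb an almost-disjoint coding of the construction's history into $A_\alpha$ while still meeting clauses (1) and (2).

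The gap is in the final complexity computation. The characterization you display---``$x\in\calA$ iff \emph{for every} countable transitive model $M$ of a fragment of $\ZF^-+(V=L)$ with $x\in M$, $M$ thinks $x$ was enumerated''---is not $\Pi^1_1$. Coding such $M$ by reals, the clause ``$M$ is wellfounded'' sits in the antecedent of the implication and hence occurs negatively, so the formula has the shape $\forall z\,(\neg\mathrm{WF}(z)\vee\cdots)$, which is $\Pi^1_2$; this universal-over-models form is precisely what the paper uses in Section \ref{FZ section} for the $\Pi^1_2$ family $\calA_2$, where self-coding is impossible because the witnessing ordinals range up to $\omega_2$. Worse, a transitive model of $V=L$ containing $x$ exists only when $x$ is constructible, so in an arbitrary \ZFC\ model every non-constructible real satisfies your formula vacuously, which would destroy both maximality and the clause ``\ZFC\ proves $\calA\subseteq L$''. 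The repair is to use the self-code \emph{positively}: $x\in\calA$ iff the real decoded from $x$ codes a wellfounded extensional structure satisfying $V=L$ plus a suitable finite fragment, whose transitive collapse $L_\gamma$ verifies that $x$ is exactly the set produced at the stage that $x$ names. There wellfoundedness occurs positively, so the definition is of the form ($\Pi^1_1$ $\wedge$ arithmetic), hence lightface $\Pi^1_1$, and it visibly entails $x\in L_\gamma\subseteq L$ provably in \ZFC. With that correction (and condensation to see that any such collapse really is an $L_\gamma$ computing the recursion correctly), your argument is the intended one.
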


By recursion on $\alpha < \omega_2$, define 
a countable support iteration 
 $\langle \mathbb{P}_\alpha, \dot{\mathbb{Q}}_\beta : \alpha \leq \omega_2, \beta < \omega_2 \rangle$, where
$\mathbb{P}_0$ is taken to be the trivial poset. Suppose
$\mathbb{P}_\alpha$ has been defined, and let 
$G_\alpha$ be $\mathbb{P}_\alpha$-generic over $L$.
The wellorder $<_\alpha$ on $L[G_\alpha]$
has a natural definition using the global wellorder $<_L$
of the universe $L$ and the collection of $\mathbb{P}_\alpha$-names for reals. We can assume any name for a real $\dot{x}$ is
\emph{nice}, in that it is determined by countably many 
maximal antichains $A_n \subseteq \mathbb{P}_\alpha$ consisting 
of conditions deciding $\dot{x}(n)$. This allows us to suppose that if $\alpha < \beta < \omega_2$ and 
  $\dot{x}$ is a $\mathbb{P}_\beta$-name which is not
  a $\mathbb{P}_\alpha$-name, then all $\mathbb{P}_\alpha$-names
  precede $\dot{x}$ with respect to $<_L$, 
  as it takes longer to construct $x$. 
  Whenever $x \in L[G_\alpha]$ is a real, let 
  $\gamma_x$ denote the minimal $\gamma$ such that
 $x$ has a nice $\mathbb{P}_\gamma$-name, and define $\sigma_x^\alpha$ to be 
  the $L$-minimal nice $\mathbb{P}_{\gamma_x}$-name for $x$. In this
  way we can understand the reals of $L[G_\alpha]$ by 
  considering the set 
  $N = \set{ \sigma_x^\alpha}[x \in L[G_\alpha{]} \intersection 
  \omega^\omega],$
  which is canonically wellordered by $<_L$;
  therefore for reals $x,y \in L[G_\alpha]$, 
  let 
  $x <_\alpha y$ if and only if $\sigma_x^\alpha <_L
  \sigma_y^\alpha$.
  Since
   $\sigma_x^\alpha = \sigma_x^\beta = 
  \sigma_x^{\gamma_x}$ for any $\beta < \alpha$, 
  $<_\beta$ is an initial segment of $<_\alpha$.
  Let $\dot{<}_\alpha$ denote a $\mathbb{P}_\alpha$-name 
  for $<_\alpha$. 
  
 Fix a recursive coding $\cdot \ast \cdot \from 
 \omega^\omega \times \omega^\omega \to \omega^\omega$ by 
 letting
 $x \ast y = 
  \set{2n}[n \in x] \union \set{2n+1}[n \in y],$
  and for a real $x$ define 
 $\Delta(x) := x \ast (\omega \setminus x)$.
 Lastly we fix an absolute way of coding the ordinals 
 of $\omega_2^L$. 
 \begin{definition} \label{sufficiently absolute code}
Let $\beta < \omega_2^L$ and $X \subseteq \omega_1^L$. 
We say $X$ is a \emph{sufficiently absolute code for }$\beta$ if
there exists a formula $\psi(x,y)$ such that for any suitable
model $\calM$ containing $X \intersection \omega_1^\calM$, 
there exists a unique 
$\overline{\beta} \in \omega_2^\calM$ such that 
$\psi(\overline{\beta}, X \intersection \omega_1^\calM)$ 
holds in $\calM$, 
and $\overline{\beta}= \beta$ in the case 
$\omega_1^\calM = \omega_1^{L}$. 
Moreover, if $\calM, \calN$ are any suitable models such that 
$\omega_1^\calM = \omega_1^\calN$ and $X \intersection 
\omega_1^\calM \in \calM \intersection \calN$, 
then it is the same $\overline{\beta} \in \omega_2^\calM 
\intersection \omega_2^\calN$ such that
$(\psi(\overline{\beta}, X \intersection \omega_1^\calM))^\calM$
and $(\psi(\overline{\beta}, X \intersection \omega_1^\calN))^
\calN$. 
\end{definition}

\begin{fact}{(\cite[Fact 5]{Friedmanzdomskyy})} \label{sufficiently absolute codes exist}
Sufficiently absolute codes exist for every 
$\beta < \omega_2^L$. 
\end{fact}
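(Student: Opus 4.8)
The plan is to build, for a fixed $\beta < \omega_2^L$, a subset $X \subseteq \omega_1^L$ that directly encodes a wellorder of order type $\beta$, and to let $\psi$ simply assert ``$y$ is the order type of the wellorder coded by $x$''. Working in $L$: since $\beta < \omega_2^L$ there is an ordinal $\kappa \le \omega_1^L$ and a wellorder $R \subseteq \kappa \times \kappa$ of $\kappa$ of order type $\beta$ (take $\kappa = \beta$ with $R = {\in}\restriction\beta$ if $\beta < \omega_1^L$, and $\kappa = \omega_1^L$ with $R$ any wellorder of $\omega_1^L$ of order type $\beta$ otherwise). Fix once and for all the G\"odel pairing function $G \from \mathrm{Ord} \times \mathrm{Ord} \to \mathrm{Ord}$, which is absolute and whose restriction to $\lambda \times \lambda$ is a bijection onto $\lambda$ for every infinite cardinal $\lambda$, and set
$$X \;=\; \{\, G(\gamma,\delta) \mid (\gamma,\delta) \in R \,\} \;\subseteq\; \kappa \;\subseteq\; \omega_1^L .$$
Let $\psi(y,x)$ be the formula ``$R_x := \{(\gamma,\delta) : G(\gamma,\delta) \in x\}$ is a wellorder whose field is an ordinal, and $y = \mathrm{ot}(R_x)$''; note $\psi$ does not depend on $\beta$, only $X$ does.

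Then I would verify that for an arbitrary suitable $\calM$ with $X \cap \omega_1^\calM \in \calM$ there is a unique $\bar\beta \in \omega_2^\calM$ satisfying $\psi(\bar\beta, X \cap \omega_1^\calM)$ in $\calM$. As $\omega_1^\calM$ is a cardinal of $\calM$ it is closed under $G$, with $G$ mapping $(\omega_1^\calM)^2$ onto $\omega_1^\calM$ --- both are absolute assertions about ordinals --- so inside $\calM$ the set $X \cap \omega_1^\calM$ decodes to exactly $R \restriction \omega_1^\calM$, a relation whose field is an initial segment of $\omega_1^\calM$. Linearity of $R \restriction \omega_1^\calM$ is absolute and wellfoundedness is downward absolute from $V$ to the transitive model $\calM$, so $\calM$ sees a wellorder; as $\calM$ is a model of \ZFminus, Replacement gives its order type $\bar\beta \in \mathrm{Ord}^\calM$, and functionality of $\mathrm{ot}$ yields uniqueness. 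The order isomorphism is a bijection from a subset of $\omega_1^\calM$ onto $\bar\beta$, so $\calM \models |\bar\beta| \le \aleph_1$, i.e.\ $\bar\beta < \omega_2^\calM$, as required.

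It remains to check the two calibration clauses. If $\omega_1^\calM = \omega_1^L$ then, as $\kappa \le \omega_1^L$, we have $X \cap \omega_1^\calM = X$, which decodes to $R$ itself; since order type is absolute between transitive models containing the relation, $\bar\beta = \mathrm{ot}(R) = \beta$. Moreover the hypothesis $X \in \calM$ alone forces $\calM$ to be tall enough, because $\calM$ then sees a wellorder of $\kappa \le \omega_1$ of length $\beta$, so $\beta < \omega_2^\calM$. If $\calM, \calN$ are suitable with $\omega_1^\calM = \omega_1^\calN =: \gamma$ and $X \cap \gamma \in \calM \cap \calN$, both models decode $X \cap \gamma$ to the same relation $R \restriction \gamma$ and, by absoluteness of order type, assign it the same $\bar\beta \in \omega_2^\calM \cap \omega_2^\calN$.

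The one point that deserves care is the behaviour of the code under restriction to short models: one must ensure that in a suitable $\calM$ with $\omega_1^\calM < \omega_1^L$ the set $X \cap \omega_1^\calM$ still decodes, via $G$, to a genuine wellorder whose order type lies below $\omega_2^\calM$. This is exactly where the specific choice of $G$ enters, through the fact that every $\calM$-cardinal --- in particular $\omega_1^\calM$ --- is closed under $G$ with $G$ mapping its square onto it; everything else is routine absoluteness bookkeeping (downward absoluteness of wellfoundedness, absoluteness of order type, and the fact, provable in \ZFminus, that every wellorder has an order type). This reproduces \cite[Fact 5]{Friedmanzdomskyy}.
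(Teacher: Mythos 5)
Your construction is correct and is essentially the standard argument behind \cite[Fact 5]{Friedmanzdomskyy} (the paper itself only cites this fact without proof): code a wellorder of some $\kappa\le\omega_1^L$ of type $\beta$ as a subset of $\omega_1^L$ via G\"odel pairing, and use that every $\calM$-cardinal is closed under the pairing function with $G\restriction(\omega_1^\calM)^2$ onto $\omega_1^\calM$, so that $X\cap\omega_1^\calM$ decodes exactly to $R\restriction\omega_1^\calM$; all the absoluteness steps you invoke (downward absoluteness of wellfoundedness, \ZFminus-computation and absoluteness of order types) are sound. One small point worth adding for the way the fact is used later: the code is plugged into the $\Sigma_1$ localization sentence $\varphi_\alpha$, so you should phrase $\psi(y,x)$ in the purely $\Sigma_1$ form ``there exists an order isomorphism of $R_x$ onto the ordinal $y$'' (which already entails wellfoundedness and gives the same uniqueness), rather than as the conjunction with the $\Pi_1$ clause ``$R_x$ is a wellorder''.
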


Henceforth we fix the formula $\psi(x,y)$ above.
 Working in  $L[G_\alpha]$, let
 $\dot{\mathbb{Q}}_\alpha = \dot{\mathbb{Q}}_\alpha^0 \ast 
 \dot{\mathbb{Q}}_\alpha^1$ be a 
 $\mathbb{P}_\alpha$-name for a two-step iteration in which
 $\dot{\mathbb{Q}}_\alpha^0$ is a $\mathbb{P}_\alpha$-name for
$\mathbb{Q}$ (Definition 
 \ref{def creature}), and 
 $\dot{\mathbb{Q}}_\alpha^1$ is a $\mathbb{P}_\alpha \ast \dot{\mathbb{Q}}_\alpha^0$-name 
 for the trivial forcing, unless the following occurs: 
 $F(\alpha)= \set{\sigma_x^\alpha, \sigma_y^\alpha}$ for some reals
 $x,y \in L[G_\alpha]$ such that $x
  <_\alpha y$. In this case set $x_\alpha = x$, $y_\alpha = y$ 
   and 
  define $\dot{\mathbb{Q}}_\alpha^1$ to be a
  $\mathbb{P}_\alpha \ast \dot{\mathbb{Q}}_\alpha^0$-name 
  for a three-step
  iteration $\dot{\mathbb{K}}_\alpha^0 \ast \dot{\mathbb{K}}_\alpha^1 
  \ast \dot{\mathbb{K}}_\alpha^2$, where:

\medskip
$(1)$ $\dot{\mathbb{K}}_\alpha^0$ is a $\mathbb{P}_\alpha
\ast \dot{\mathbb{Q}}_\alpha^0$-name for 
  the countable support iteration
  $\langle \mathbb{P}_{\alpha, \beta}^0, \mathbb{K}_{\alpha, n}^0 
  : \beta \leq \omega, n \in \omega
  \rangle$, where $\mathbb{K}_{\alpha, m}^0$ is a 
  $\mathbb{P}_{\alpha, m}^0$-name for $Q(S_{\alpha+m})$
  for all $m \in \Delta(x_\alpha \ast y_\alpha)$. 

\medskip
$(2)$ Let $R_\alpha$ be $\mathbb{Q}_\alpha^0$-generic
 over $L[G_\alpha]$, and let $H_\alpha$ be
 $\mathbb{K}_\alpha^0$-generic over 
 $L[G_\alpha \ast R_\alpha]$.  
 In $L[G_\alpha \ast R_\alpha \ast H_\alpha]$ fix:
 \begin{itemize}
 \item Subsets 
 $W_\alpha, W_\eta \subseteq \omega_1$ such that $W_\alpha$ 
 is a sufficiently absolute code for the ordinal 
 $\alpha$ and $W_\eta$ is a sufficiently absolute code 
 for an ordinal $\eta$ such that $L_\eta \models 
 \card{\alpha} \leq \omega_1$;
 \item A real $x_\alpha \oplus y_\alpha$ recursively coding the pair 
 $(x_\alpha, y_\alpha)$; 
 \item A subset $Z_\alpha \subseteq \omega_1$ coding $G_\alpha 
 \ast R_\alpha \ast H_\alpha$. 
\end{itemize}  
Fix a computable bijection 
	$\langle \cdot, \cdot, \cdot, \cdot \rangle \from 
	\mathcal{P}(\omega_1) \to \mathcal{P}(\omega_1)$, 
	and for $X \subseteq \omega_1$ and $i< 4$ write 
	$(X)_i$ for those elements of $\mathcal{P}(\omega_1)$ 
	such that $X = \langle (X)_i : i < 4 \rangle$.
	Let $X_\alpha = 
	\langle W_\alpha, x_\alpha \oplus y_\alpha, W_\eta, 
   Z_\alpha \rangle$ and let 
 $\varphi_\alpha = \varphi_\alpha(\omega_1, X_\alpha)$   be a 
 sentence with parameters $\omega_1$ and $X_\alpha$ such that 
 $\varphi_\alpha$ holds if and only if:
There exists an ordinal 
$\overline{\alpha} \in \omega_2$ such that $(X_\alpha)_0 = 
\overline{\alpha}$ and there exists 
 a pair $(x,y)$ such that $(X_\alpha)_2 = 
 x \oplus y$ and
  for all $m \in \Delta(x \ast y)$, 
 $S_{\overline{\alpha} + m}$ is nonstationary. 
Next, define 
  $\dot{\mathbb{K}}_\alpha^1$ to be a $(\mathbb{P}_\alpha \ast
  \dot{\mathbb{Q}}_\alpha^0 \ast
 \mathbb{K}_\alpha^0)$-name for 
 $\mathcal{L}(\varphi_\alpha)$.  

\medskip
$(3)$ Let $Y_\alpha$ be $\mathbb{K}_\alpha^1$-generic over 
 $L[G_\alpha \ast R_\alpha \ast H_\alpha]$. Then as 
 $\set{n \in \omega}[2n \in Y_\alpha]= X_\alpha$ and
 $X_\alpha$ codes $G_\alpha \ast R_\alpha \ast H_\alpha$, we have
 $L[G_\alpha \ast R_\alpha \ast H_\alpha \ast Y_\alpha] 
 = L[Y_\alpha]$. In this model, let 
 $\mathbb{K}_\alpha^2$ be a 
 $(\mathbb{P}_\alpha \ast \dot{\mathbb{Q}}_\alpha^0 \ast 
 \dot{\mathbb{K}}_\alpha^0 \ast 
 \dot{\mathbb{K}}_\alpha^1)$-name for 
 $C(Y_\alpha)$. 

This completes the definition of $\mathbb{P} = \mathbb{P}_{\omega_2}$. Then $\mathbb{P}$ is a countable 
support iteration such that each $\mathbb{P}_\alpha$ 
forces that  $\mathbb{Q}_\alpha$ is an $S_{-1}$-proper
forcing notion of size $\leq \omega_1$,
and by Theorem \ref{Prop Q strongly preserves tightness}
and Fact \ref{properties of C(Y)}, 
$\mathbb{Q}_\alpha$ is forced to 
strongly preserves the tightness of $\calA$. 
Therefore:
\begin{lemma} \label{Lemma full iteration adding delta13}
$\mathbb{P}$ is $S_{-1}$-proper, strongly preserves the 
tightness of $\calA$, and 
has the $\aleph_2$-cc. 
\end{lemma}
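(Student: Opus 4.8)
The plan is to verify the three claimed properties of $\mathbb{P} = \mathbb{P}_{\omega_2}$ separately, each by invoking the corresponding preservation theorem for countable support iterations together with a check that every iterand has the required property. For the $S_{-1}$-properness, I would apply Lemma \ref{preservation of S proper omega bounding}(1); it suffices to show that for each $\alpha < \omega_2$, $\mathbb{P}_\alpha$ forces ``$\dot{\mathbb{Q}}_\alpha$ is $S_{-1}$-proper.'' Recall $\dot{\mathbb{Q}}_\alpha = \dot{\mathbb{Q}}_\alpha^0 \ast \dot{\mathbb{Q}}_\alpha^1$, where $\dot{\mathbb{Q}}_\alpha^0$ is (a name for) the creature forcing $\mathbb{Q}$ and $\dot{\mathbb{Q}}_\alpha^1$ is either trivial or a three-step iteration $\dot{\mathbb{K}}_\alpha^0 \ast \dot{\mathbb{K}}_\alpha^1 \ast \dot{\mathbb{K}}_\alpha^2$. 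The forcing $\mathbb{Q}$ is proper, hence $S$-proper for every stationary $S$; the club-shooting iteration $\dot{\mathbb{K}}_\alpha^0$ is an iteration of the posets $Q(S_{\alpha+m})$, each of which is $(\omega_1 \setminus S_{\alpha+m})$-proper, and since $S_{-1}$ is disjoint from every $S_{\alpha+m}$ by Proposition \ref{statcostat sequence}, we have $S_{-1} \subseteq \omega_1 \setminus S_{\alpha+m}$ so each $Q(S_{\alpha+m})$ is $S_{-1}$-proper (and the iteration is $S_{-1}$-proper by another application of Lemma \ref{preservation of S proper omega bounding}(1)); finally $\dot{\mathbb{K}}_\alpha^1 = \mathcal{L}(\varphi_\alpha)$ and $\dot{\mathbb{K}}_\alpha^2 = C(Y_\alpha)$ are proper by Lemma \ref{Lemma localization is proper no new reals} and Fact \ref{properties of C(Y)}(2), hence $S_{-1}$-proper. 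Composing these finitely many $S_{-1}$-proper steps keeps $S_{-1}$-properness, so each $\dot{\mathbb{Q}}_\alpha$ is forced to be $S_{-1}$-proper, and Lemma \ref{preservation of S proper omega bounding}(1) gives that $\mathbb{P}$ is $S_{-1}$-proper.

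For the strong preservation of the tightness of $\calA$, I would again use Lemma \ref{preservation of S proper omega bounding}(1) (its ``moreover'' clause), which reduces the task to showing that each $\dot{\mathbb{Q}}_\alpha$ is forced to strongly preserve the tightness of $\check{\calA}$. Here the key point is that strong preservation of tightness is itself preserved under countable support iterations by Lemma \ref{csi iterations strongly preserve tightness}, so it is enough to check the property for each of the finitely many constituents of $\dot{\mathbb{Q}}_\alpha$. The creature forcing $\mathbb{Q}$ strongly preserves the tightness of $\calA$ by Theorem \ref{Prop Q strongly preserves tightness}. The club-shooting posets $Q(S_{\alpha+m})$ and the localization forcing $\mathcal{L}(\varphi_\alpha)$ add no new reals (so in particular they cannot destroy tightness or create new elements of $\calI(\calA)^+$ in the generic extension), and in the outer-model formulation of Definition \ref{preserve def} one takes $B$ itself as the witness $q$ (with $q$ simply the given condition $p$, which is automatically $(M,\mathbb{P})$-generic by $S$-properness and triviality of the new reals); hence these are trivially strongly tightness-preserving. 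Finally $C(Y_\alpha)$ strongly preserves the tightness of $\Pi_1^1$ tight mad families consisting of constructible reals by Fact \ref{properties of C(Y)}(5), and $\calA$ is such a family by Lemma \ref{Lemma exists coanalytic tight}. Composing these steps via Lemma \ref{csi iterations strongly preserve tightness} yields that each $\dot{\mathbb{Q}}_\alpha$ strongly preserves the tightness of $\calA$, so by Lemma \ref{preservation of S proper omega bounding}(1) the full iteration $\mathbb{P}$ does as well.

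For the $\aleph_2$-chain condition, I would apply Lemma \ref{preservation of S proper omega bounding}(2): since $V = L \models \mathsf{CH}$ and $\mathbb{P}$ is a countable support iteration of $S_{-1}$-proper posets of length $\omega_2$, it suffices to verify that each $\dot{\mathbb{Q}}_\alpha$ is forced to have size $\leq \omega_1$. By Lemma \ref{preservation of S proper omega bounding}(2) itself (applied to the initial segments), each $\mathbb{P}_\alpha$ for $\alpha<\omega_2$ is $\aleph_2$-cc and preserves $\mathsf{CH}$, so in $L[G_\alpha]$ we still have $\mathsf{CH}$; the creature forcing $\mathbb{Q}$ has size continuum, which is $\aleph_1$ there; the club-shooting iterations have conditions that are countable closed sets, the localization poset $\mathcal{L}(\varphi_\alpha)$ has conditions that are countable binary sequences, and $C(Y_\alpha)$ has perfect-tree conditions — in each case the number of conditions is at most $2^{\aleph_0} = \aleph_1$ under $\mathsf{CH}$, and a finite iteration of such posets still has size $\leq \aleph_1$. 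Hence $\dot{\mathbb{Q}}_\alpha$ is forced to have size $\leq \omega_1$, and Lemma \ref{preservation of S proper omega bounding}(2) gives that $\mathbb{P}_{\omega_2}$ is $\aleph_2$-cc.

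The main obstacle, and the only point requiring genuine care rather than bookkeeping, is confirming that the outer-model formulation of strong preservation of tightness (Definition \ref{preserve def}) is indeed vacuous for forcings adding no new reals — one must check that for such a $\mathbb{P}$, given $M \prec H_\theta$ and the set $B$ as in the definition, the $(M,\mathbb{P})$-generic condition $q \leq p$ (which exists by $S$-properness) automatically forces ``every $\dot Z \in \calI(\calA)^+ \cap M[\dot G]$ has infinite intersection with $B$,'' because $M[\dot G] \cap \omega^\omega = M \cap \omega^\omega$ and so every such $\dot Z$ is (a name for) a ground-model element of $\calI(\calA)^+ \cap M$, whence $B \cap \dot Z$ is already infinite by hypothesis on $B$. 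This is standard but should be stated; everything else is an assembly of the cited preservation theorems.
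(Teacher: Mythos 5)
Your proposal is correct and takes essentially the same route as the paper: the paper's own proof is a one-line citation of the iteration lemmas (Lemmas \ref{preservation of S proper omega bounding} and \ref{csi iterations strongly preserve tightness}), with the single-step facts --- that each $\dot{\mathbb{Q}}_\alpha$ is forced to be $S_{-1}$-proper, of size $\leq\omega_1$, and to strongly preserve the tightness of $\calA$ --- merely asserted in the paragraph preceding the lemma, and you simply spell out those verifications. One local slip: the parenthetical claiming that $p$ itself is ``automatically $(M,\mathbb{P})$-generic'' for a forcing adding no new reals is false (for club shooting a generic condition must still be constructed, and exists only when $M\cap\omega_1\notin S_{\alpha+m}$); the correct argument, which you do give in your final paragraph, is to take the generic $q\leq p$ furnished by $S_{-1}$-properness and then use that $M[\dot G]$ contains no new elements of $\calI(\calA)^+$.
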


\begin{proof}
By Lemma \ref{preservation of S proper omega bounding}, Lemma \ref{csi iterations strongly preserve tightness}, and Lemma \ref{chain condition S proper}.
\end{proof}

Let $G$ be $\mathbb{P}$-generic over $L$ and $<_G = \bigcup <_\alpha$, where 
$<_\alpha = \dot{<}_\alpha^G$.
\begin{lemma} [{\cite{FF2010}}]\label{definability of wellorder}
Let $G$ be $\mathbb{P}$-generic over $L$, and let $x,y$ be reals in 
$L[G]$. Then $x <_G y$ if and only if:
$(*)$ there exists  a real 
$r$ such that for every countable suitable $\calM$ containing 
$r$ as an element, there exists 
$\overline{\alpha} < \omega_2^\calM$ such that for all 
$m \in \Delta(x \ast y)$, $S_{\overline{\alpha}+m}^\calM$ is 
nonstationary in $\calM$.
Hence, $<_G$ is a $\Delta_3^1$ definable wellorder of 
the reals in $L[G]$.
\end{lemma}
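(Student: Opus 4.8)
The plan is to prove the biconditional $x<_G y \iff (*)$; the ``wellorder'' clause is then immediate from the construction (each $<_\alpha$ is a wellorder of the reals of $L[G_\alpha]$, $<_\beta$ is an initial segment of $<_\alpha$ for $\beta<\alpha$, and every real of $L[G]$ lies in some $L[G_\alpha]$ by the $\aleph_2$-cc of Lemma~\ref{chain condition S proper}), and the ``$\Delta^1_3$'' clause from a routine complexity count: ``$\calM$ is (a real coding) a countable suitable model containing $r$'' is arithmetic in $r$ and the code modulo the $\Pi^1_1$ assertion that $\calM$ is well founded, so the matrix of $(*)$ past the leading $\exists r$ is $\Pi^1_2$ and $(*)$ is $\Sigma^1_3$; running the same argument for the pair $(y,x)$ shows $y<_G x$ is $\Sigma^1_3$, and since $x<_G y$ is equivalent to $x\ne y\wedge\neg(y<_G x)$, the relation $<_G$ is also $\Pi^1_3$.

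For $(\Rightarrow)$ I would start from $x<_G y$ and use the unboundedness of $F^{-1}$ (and that the $<_\alpha$ cohere) to locate a \emph{coding stage} $\alpha<\omega_2$ where $F(\alpha)=\{\sigma_x^\alpha,\sigma_y^\alpha\}$ and $x<_\alpha y$, so that $x_\alpha=x$, $y_\alpha=y$, and $\dot{\mathbb Q}_\alpha^1$ runs in turn the club shootings $\dot{\mathbb K}^0_\alpha$ through the $S_{\alpha+m}$ for $m\in\Delta(x\ast y)$, then $\dot{\mathbb K}^1_\alpha=\mathcal L(\varphi_\alpha)$ producing $Y_\alpha$ with $\mathrm{Even}(Y_\alpha)=X_\alpha$, then $\dot{\mathbb K}^2_\alpha=C(Y_\alpha)$ producing $r_\alpha:=\bigcap G^{C(Y_\alpha)}$. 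Taking $r:=r_\alpha$, for an arbitrary countable suitable $\calM$ with $r\in\calM$ one uses Fact~\ref{properties of C(Y)}(3), relativised to $\calM$ (legitimate, since the hierarchy $\calB_j$ is defined by a recursion uniform in $Y_\alpha\cap j$ and $\calM$ is suitable), to recover $Y_\alpha\cap\omega_1^\calM$, hence $X_\alpha\cap\omega_1^\calM$, hence the initial segment $\bigl(\bigcup G^{\mathcal L(\varphi_\alpha)}\bigr)\restriction\omega_1^\calM\in\calM$; Lemma~\ref{generic for localization} then gives $\calM\models\varphi_\alpha(\omega_1^\calM,X_\alpha\cap\omega_1^\calM)$. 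Unpacking $\varphi_\alpha$, using Fact~\ref{sufficiently absolute codes exist} for the code $(X_\alpha)_0=W_\alpha$ of $\alpha$ together with the clause $(X_\alpha)_2=x\oplus y$ that pins down the pair, yields the required $\overline\alpha<\omega_2^\calM$ with $S^\calM_{\overline\alpha+m}$ nonstationary in $\calM$ for every $m\in\Delta(x\ast y)$, i.e.\ $(*)$.

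For $(\Leftarrow)$ I would first establish the characterisation that in $L[G]$ a set $S_\gamma$ ($\gamma<\omega_2$) is nonstationary \emph{iff} $\gamma=\alpha+m$ for some coding stage $\alpha$ and some $m\in\Delta(x_\alpha\ast y_\alpha)$, the bookkeeping being arranged so that distinct coding stages consume disjoint length-$\omega$ blocks of $\vec S$. One inclusion is built into the construction; the other is a preservation-of-stationarity argument --- each $Q(S_{\alpha+m})$ preserves the stationarity of $S_\gamma\setminus S_{\alpha+m}$ by almost-disjointness of $\vec S$, the creature forcing, $\mathcal L(\varphi_\alpha)$ and $C(Y)$ are proper and hence preserve all stationary sets, and stationarity preservation iterates along countable support iterations (cf.\ \cite{FF2010}). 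Granting this, from $(*)$ with witness $r$ I would pass to a countable elementary submodel of a large $H_\theta^{L[G]}$ containing $r$ whose transitive collapse $\calM$ is suitable (such collapses are suitable because $\mathbb P$ preserves cofinalities over $L$, by Lemma~\ref{preservation of S proper omega bounding}), read off the $\calM$-witness $\overline\alpha<\omega_2^\calM$ from $(*)$, and argue --- using the coherence of $\vec S^\calM$ with $\vec S$ (Proposition~\ref{statcostat sequence}) and the absoluteness of the decoding of the witnessing clubs carried by $r$ --- that $S_{\overline\alpha+m}$ is genuinely nonstationary in $L[G]$ for every $m\in\Delta(x\ast y)$; the characterisation and the disjoint-blocks property then force $\overline\alpha$ to be a coding stage with pair exactly $(x,y)$, so $x<_{\overline\alpha}y$ and hence $x<_G y$.

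The main obstacle is the $(\Leftarrow)$ direction, specifically the claim that David's trick cannot manufacture a \emph{spurious} witness: if some real $r$ makes every suitable $\calM\ni r$ see the pattern $\Delta(x\ast y)$ at some base, that base must be a genuine coding stage for $(x,y)$ in $L[G]$. This is exactly where the careful design of $\varphi_\alpha$ --- localising the \emph{actual} generic clubs via the code $Z_\alpha=(X_\alpha)_3$ of $G_\alpha\ast R_\alpha\ast H_\alpha$ --- together with the disjoint-blocks bookkeeping and the mutual coherence across suitable models of $\vec S$, $F$, and the sufficiently absolute codes (Proposition~\ref{statcostat sequence}, Fact~\ref{sufficiently absolute codes exist}) is used; these are precisely the features carried over from \cite{FF2010}, so the argument is in essence that of \cite{FF2010} transplanted to the present iteration.
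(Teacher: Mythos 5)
Your proposal is correct in outline and follows exactly the route the paper delegates to \cite{FF2010}: the forward direction via the coding stage, the Sacks-coding real, and the localization lemma; the backward direction via stationarity preservation plus a transitive collapse of a countable $N \prec H_\theta^{L[G]}$ (where the cleanest justification that $\calM$'s nonstationarity witnesses transfer up to $L[G]$ is simply elementarity of the collapse); and the same $\Sigma^1_3$-plus-totality complexity count the paper spells out after the lemma. No genuine gap beyond the level of detail already deferred to \cite{FF2010}.
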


The complexity of $<_G$ is explicitly calculated as follows. 
We have
$x<_G y \Leftrightarrow \Phi(x,y),$
where $\Phi(x,y)$ is the formula

\begin{align*}
\underbrace{\exists r \in \mathcal{P}(\omega^\omega)}_\textrm{$\exists$} [ &
\underbrace{\forall \calM \text{ countable, suitable, }r \in \calM}_\textrm{$\forall$}\\
&\underbrace{(\exists \alpha < \omega_2^\calM}_\textrm{$\exists$} \ ( \underbrace{\calM \models  \forall m \in 
\Delta(x \ast y)  \ S_{\alpha+m} \in 
\mathrm{NS}_{\omega_1})}_\textrm{$\Delta_1^1$})]
\end{align*}

Thus, $\Phi(x,y)$ is a $\Sigma_3^1$ formula. 
However, 
$<_G$ is also a total wellorder, since if $x,y$ are any reals 
in $L[G]$, there exists  $\alpha = \max (\gamma_x, \gamma_y)<\omega_2$ such that $x = ({\sigma_x^\alpha})^G$ and  $y = ({\sigma_y^\alpha})^G$. 
Either $\sigma_x^\alpha <_L \sigma_y^\alpha$ or 
$\sigma_y^\alpha <_L 
\sigma_x^\alpha$; in the case $\neg(x<_G y)$ then we must have
$y <_G x$. Therefore the complement of 
$<_G$ is $\Sigma_3^1$-definable, giving that
 $<_G$ is $\Delta_3^1$-definable.

 
\medskip
\noindent
\emph{Proof of Theorem \ref{delta 13 with a < s}.}
Suppose $V=L$, and fix a $\Pi_1^1$ definable 
 tight mad family $\calA$ from 
 Lemma \ref{Lemma exists coanalytic tight}. Let 
 $\mathbb{P}$ be the $\omega_2$-length countable support iteration
 constructed in this section, and let $G$ be 
 $\mathbb{P}$-generic over $V$. By Lemma
 \ref{Lemma full iteration adding delta13}, $\mathbb{P}$
 is $S_{-1}$-proper.
 Define $<_G = \bigcup_{\alpha < \omega_2} \dot{<}_\alpha^G$; 
 by Lemma 
 \ref{definability of wellorder}, 
 $<_G $ is a 
 $\Delta_3^1$ wellorder of the reals. Since any set 
 $\calS \subseteq \infsubsets \intersection V[G]$ such 
 that 
 $\card{\calS} < \omega_2$ appears at some inital stage 
 $\alpha < \omega_2$ of the iteration, by 
 definition 
 $\dot{\mathbb{Q}}_\alpha^0$ is a 
 $\mathbb{P}_\alpha$-name for the forcing 
 $\mathbb{Q}$ of Definition \ref{def creature}, so
 $\calS$ is not splitting in 
 $V[G_{\alpha+1}]$. 
 Therefore $(\mathfrak{s} = 
 \aleph_2)^{V[G]}$. 
 Next notice that by Shoenfield absoluteness, 
 $\calA$ is defined by the same $\Pi_1^1$ formula in 
 $V[G]$ as in $V$. Moreover $\calA$ remains a tight
 mad family in $V[G]$ by Lemma \ref{Lemma full iteration adding delta13}, and therefore $\calA$ provides a witness for  $(\mathfrak{a} = \aleph_1)^{V[G]}$. This completes
 the proof. 
 \qed

    \section{Definable Spectra} 
    \label{FZ section}
      
  In this section we consider projective 
    mad families, 
    with a definition of optimal complexity, 
    which are of size
    $\kappa > \aleph_1$.
We briefly give an account of the work in this 
direction thus far.

Friedman and Zdomskyy \cite{Friedmanzdomskyy} established that a tight 
mad family with optimal projective definition is consistent with
$\mathfrak{b} = \mathfrak{c} = \aleph_2$ and this was 
extended by Fischer, Friedman and Zdomskyy \cite{FFZ11}
 to $\mathfrak{b} = 
\mathfrak{c} = \aleph_3$. By \cite{Raghavan}
no tight mad family can contain a perfect subset, and
the Mansfield-Solovay theorem states that any $\Sigma_2^1$ 
set is either a subset of $L$, or 
contains a perfect set of nonconstructible reals. Therefore,
no $\Sigma_2^1$ tight mad family can exist
in a model of $\mathfrak{b} > \aleph_1$; the optimal definition 
of a tight mad family of size greater than 
$\aleph_2$ 
is thus $\Pi_2^1$. 
Dropping the tightness requirement, Brendle and Khomskii 
\cite{BK12} showed $\mathfrak{b} = \mathfrak{c} \geq \aleph_2$, 
is consistent with a $\Pi_1^1$ mad family; one can
moreover have a $\Delta_3^1$ wellorder of the reals in such 
a model by
Fischer, Friedman, and Khomskii \cite{FFK13}. 
The first work on projective witnesses of size $\kappa$
when $\aleph_1 < \kappa < \mathfrak{c}$ is done by Fischer, 
Friedman, Schrittesser, and T\"ornquist \cite{FFST}; 
again by the Mansfield-Solovay theorem, the best
possible complexity of such an object is $\Pi_2^1$. 

So far the attention has been on finding definable 
mad families witnessing
the value of $\mathfrak{a}$,
the minimal element of the set
$\mathrm{spec}(\mathfrak{a}) = \set{ \card{\calA} }[
\calA \subseteq \infsubsets \text{ is mad}].$
The study of this set was pioneered by Hechler \cite{Hechler72}
and followed up by Blass \cite{Blass94}, and 
Shelah and Spinas \cite{Shelahspinas}. Similar considerations 
have since been taken for the cardinals $\mathfrak{a}_T$
and $\mathfrak{i}$; see \cite{FS25}, \cite{Brian24} for the former, and 
\cite{Fischershelah19}, \cite{Fischershelah22} for the latter.
Given that now there is fairly substantial knowledge
of how to control which $\kappa \in \mathrm{spec}(\mathfrak{a})$
as well as the definability of mad families of size 
$\mathfrak{a} = \min (\mathrm{spec}(\mathfrak{a}))$
it is natural we try to combine these lines of inquiry, 
by asking that for every $\kappa \in \mathrm{spec}(\mathfrak{a})$
there is a projective mad family of size
$\kappa$ with an optimal definition.  
Here we obtain:

\begin{theorem}\label{Theorem FZ section 2 sizes}
    It is consistent with $\mathfrak{a} = \aleph_1 < \mathfrak{c}
 = \aleph_2$ that there exists a $\Pi_1^1$-definable tight mad
 family of size $\aleph_1$, and a $\Pi_2^1$-definable tight mad
 family of size $\aleph_2$.
\end{theorem}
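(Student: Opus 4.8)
The plan is to adapt the $S_{-1}$-proper countable support iteration of Friedman and Zdomskyy \cite{Friedmanzdomskyy} --- which over $L$ yields a $\Pi_2^1$ tight mad family together with $\mathfrak{b}=\mathfrak{c}=\aleph_2$ --- in two ways. First, I would base everything over $L$ and fix, via Lemma \ref{Lemma exists coanalytic tight}, a $\Pi_1^1$ tight mad family $\calA$ that provably consists of constructible reals. Second, since we now want $\mathfrak{a}=\aleph_1$ and hence (because $\mathfrak{b}\le\mathfrak{a}$) need the ground model reals to remain unbounded, I would replace each iterand of the Friedman--Zdomskyy iteration that adds a dominating real --- the Hechler-type steps, used there only to force $\mathfrak{b}=\aleph_2$ --- by Shelah's creature forcing $\mathbb{Q}$ of Definition \ref{def creature}. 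This replacement is chosen precisely because $\mathbb{Q}$ is almost $\omega^\omega$-bounding, so no dominating reals enter, and, by Theorem \ref{Prop Q strongly preserves tightness}, strongly preserves the tightness of $\calA$. All other iterands are the coding forcings $Q(S)$, $\mathcal{L}(\varphi)$, $C(Y)$ of the Friedman--Zdomskyy machinery, together with the steps that build the $\Pi_2^1$ family $\calB$; these have to be checked for strong preservation of tightness as well.

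Next, I would verify iterand by iterand that each is forced to strongly preserve the tightness of $\check{\calA}$: club shooting and localization add no new reals (by $\omega$-distributivity of $Q(S)$ and by Lemma \ref{Lemma localization is proper no new reals}), so they do this trivially along the $S_{-1}$-proper skeleton; Sacks coding does by item (5) of Fact \ref{properties of C(Y)}, as $\calA$ provably consists of constructible reals; the creature steps do by Theorem \ref{Prop Q strongly preserves tightness}; and the steps building $\calB$, which one may take to be of Mathias/Miller type, do so exactly as Miller and Sacks forcing do --- or, if the variant of \cite{Friedmanzdomskyy} also adds a fast real, such a step is itself replaced by a suitably relativized creature poset, for which one re-runs the outer-hull argument of Claim \ref{claim outer hull creature} and Lemma \ref{stepA} with the relevant $\mathcal{I}(\calA)^+$-names. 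Granting in addition that every iterand is $S_{-1}$-proper of size $\le\aleph_1$, Lemma \ref{preservation of S proper omega bounding}, item (1), together with Lemma \ref{csi iterations strongly preserve tightness}, then gives that the full iteration $\mathbb{P}=\mathbb{P}_{\omega_2}$ is $S_{-1}$-proper and strongly preserves the tightness of $\calA$, and item (2) of that lemma gives that $\mathbb{P}$ is $\aleph_2$-cc and preserves cardinals.

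Then, letting $G$ be $\mathbb{P}$-generic over $L$, I would read off the conclusion: in $L[G]$ one has $\mathfrak{c}=\aleph_2$, since $\mathbb{P}$ adds reals cofinally often and is $\aleph_2$-cc over a model of \CH; the family $\calA$ is still tight mad and, by Shoenfield absoluteness together with the fact that its $\Pi_1^1$ definition provably produces a subset of $L$, is defined in $L[G]$ by the same $\Pi_1^1$ formula, so $\calA$ is a $\Pi_1^1$ tight mad family of size $\aleph_1$ and $\mathfrak{a}=\aleph_1$; and the Friedman--Zdomskyy family $\calB$ built by the iteration is, by the same coding argument as in \cite{Friedmanzdomskyy} (which is insensitive to the substitution of $\mathbb{Q}$ for the Hechler steps, since $\mathbb{Q}$ is proper and hence preserves the stationary and nonstationary subsets of $\omega_1$ used to encode $\calB$), a $\Pi_2^1$ tight mad family of size $\aleph_2=\mathfrak{c}$. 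This establishes Theorem \ref{Theorem FZ section 2 sizes}, with both projective definitions of minimal complexity by Mathias's theorem and by the Mansfield--Solovay theorem together with the fact from \cite{Raghavan} that a tight mad family contains no perfect set.

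The hard part will be the interaction between the substitution of $\mathbb{Q}$ (or its relativization) and the Friedman--Zdomskyy definability apparatus: one must confirm that the stationary/nonstationary coding and the suitable-model localization underlying the $\Pi_2^1$ definition of $\calB$ go through unchanged across the creature steps --- which should reduce to properness plus preservation of stationary sets, both of which $\mathbb{Q}$ has --- and, if the family-building forcing of \cite{Friedmanzdomskyy} is the fast-real variant, that the proof of Theorem \ref{Prop Q strongly preserves tightness} transfers to the relativized creature poset, the only genuinely new work being a re-examination of Lemma \ref{stepA} and Claim \ref{claim outer hull creature} in that relativized setting.
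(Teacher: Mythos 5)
Your high-level architecture is right (work over $L$, fix a coanalytic tight mad family preserved via strong preservation of tightness, run the Friedman--Zdomskyy iteration with the Hechler steps removed), but the proposal passes over the one point where the real work lies, and in one place proposes a substitution that would destroy the theorem. The family-building iterand of \cite{Friedmanzdomskyy} is not ``of Mathias/Miller type'': it is the poset $\mathbb{K}_\alpha$ of Definition \ref{def fz}, whose conditions have a finite almost-disjoint-coding part $\langle s,s^*\rangle$ welded to a countable sequence of club-shooting and localization approximations, and whether this poset adds a dominating real was precisely \cite[Question 18]{Friedmanzdomskyy}. Your fallback --- ``such a step is itself replaced by a suitably relativized creature poset'' --- cannot be taken: the $\Pi_2^1$ definition of the second family depends on the generic real $a_\alpha$ itself coding, via almost disjointness with the fixed family $\calR$, the clubs that kill the stationarity of $S_{\alpha+m}$ for $m\in\Delta(a_\alpha)$; replace the poset and the definability apparatus disappears. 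What is actually needed, and what the paper proves, is that $\mathbb{K}_\alpha$ itself strongly preserves the tightness of $\calA_1$ (Proposition \ref{main}). The key new ingredient is Lemma \ref{pure outer hull}: the \emph{pure} outer hull of a name for an element of $\calI(\calA_1)^+$ --- computed only over extensions with the same finite part --- is again in $\calI(\calA_1)^+$. This is established by observing that the infinite parts of conditions below a fixed finite part form a countable product of restricted club-shooting and localization posets, which decide names for integers without extending the finite part; one then runs a fusion on the infinite parts, interleaving preprocessing for dense sets with choices of elements of $B$ in each pure outer hull. None of this is ``re-running'' Claim \ref{claim outer hull creature} or Lemma \ref{stepA}, which concern the combinatorics of logarithmic measures and are specific to $\mathbb{Q}$.

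Two smaller corrections. Sacks coding $C(Y)$ plays no role in this construction; it belongs to the $\Delta_3^1$ wellorder machinery of Section 3, whereas here the coding is done entirely inside $\mathbb{K}_\alpha$ by almost disjoint coding. And for this theorem one does not need the creature forcing $\mathbb{Q}$ at all: the paper simply inserts, at successor stages, arbitrary proper posets of size $\aleph_1$ strongly preserving the tightness of $\calA_1$ (reserved for later use); the combination with $\mathbb{Q}$, which additionally yields $\mathfrak{s}=\aleph_2$, is the content of the main theorem in Section 5, not of this one.
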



The strategy in proving the above statement will be as follows. We begin in a model of 
$V=L$ and fix a $\Pi_1^1$ tight mad family 
$\calA_1$. We recursively define a countable
support iteration $\langle \mathbb{P}_\alpha, \dot{\mathbb{Q}}_\beta : \alpha \leq \omega_2, \beta < \omega_2 \rangle $,
along the way constructing a $\Pi_2^1$ tight mad family 
$\calA_2$ consisting 
of $\omega_2$-many $\mathbb{Q}_\alpha$-generic reals $a_\alpha$,
arising from a certain forcing notion 
$\mathbb{K}_\alpha$, and such that 
each $\mathbb{Q}_\alpha$ is forced by $\mathbb{P}_\alpha$ 
to be an $S$-proper poset strongly preserving
the tightness of $\calA_1$. 
The tightness of $\calA_2$ is taken care of by appropriate
 bookeeping of those countable sequences of reals appearing
 in $\mathcal{I}(\calA^2_\alpha)^+$, where $\calA^2_\alpha$
 denotes the part of $\calA_2$ constructed up to 
 stage $\alpha$.
The definition will be a slight modification of the iteration 
defined in the proof of 
 \cite[Theorem 1]{Friedmanzdomskyy}. 
 The main modification is that, whereas \cite{Friedmanzdomskyy} added Hechler reals cofinally often along the
 iteration, so to increase $\mathfrak{b}$ and ensure
 there are no mad families of size $\aleph_1$, 
 in the present construction, cofinally often we take 
 $\mathbb{Q}_\alpha$ to be some $S$-proper forcing of 
 size $\aleph_1$. $\mathbb{Q}_\alpha$ is reserved for future
 applications.
The intention behind this use of Hechler forcing in \cite{Friedmanzdomskyy} 
was pointed out in \cite[Question 18]{Friedmanzdomskyy},
suggesting that it was unknown to the authors 
whether the iterand $\mathbb{K}_\alpha$ itself
added a dominating real. Note that Proposition  \ref{main} below will show that 
this is not the case.




The $\Pi_2^1$ definability of 
$\calA_2$ is achieved similarly to that of the 
$\Delta_3^1$ wellorder of the previous section, 
however differs in subtle but important ways.
Namely, using $\vec{S} = \langle S_{\alpha} : \alpha < \omega_2 \rangle$ and $S_{-1}$, as in Proposition \ref{statcostat sequence},
the $\mathbb{K}_\alpha$-generic real $a_\alpha$ will code a stationary/nonstationary
pattern into the $\alpha$th $\omega$-block of $\vec{S}$
by coding a sequence of clubs
$\vec{C}_\alpha = \langle C_{\alpha+m} : m \in \Delta(a_\alpha) \rangle$, which are also added by $\mathbb{K}_\alpha$. 
Therefore conditions will consist of a finite part, making 
approximations to $a_\alpha$, and an infinite part, 
making countably many approximations to a club in 
$\omega_1 \setminus S_{\alpha+m}$, for $m$ belonging 
to the finite part. This simultaneous construction must be carried out carefully to ensure $S_{\alpha+m}$ remains stationary
when $m \not\in \Delta(a_\alpha)$. The lightface
projective definition is obtained again relying on the localization 
techniques appearing in the last section, so 
$\mathbb{K}_\alpha$ must also add $\langle Y_{\alpha+m} : m \in \Delta(a_\alpha) \rangle$ of sufficiently absolute 
codes for $\alpha$. 
In the final extension $V[G]$,
for all $a \in V[G] \intersection \infsubsets$ we have
\begin{align*}
a \in \calA_2 \Leftrightarrow & \forall \calM (\calM \text{ is 
a countable suitable model and } a\in \calM) \\
&\  \exists \overline{\alpha}< \omega_2^\calM \ 
 ( \calM \models \forall m \in \Delta(a) \ S_{\overline{\alpha} + m} 
\text{ is nonstationary}).
\end{align*}
The right hand side is in the form 
$\forall \exists$ and thus is a $\Pi_2^1$ formula.

For the proof of Theorem \ref{Theorem FZ section 2 sizes} we
first establish preliminaries. Throughout the rest
of the section we work in a model of $V=L$ unless explicitly
stated otherwise.
Fix a coanalytic tight mad family $\calA_1$, as well as 
$\vec{S} = \langle S_\alpha: \alpha < \omega_2 \rangle$ and  $S_{-1} \subseteq 
  \omega_1$ given by Proposition 
  \ref{statcostat sequence}. 
  Let $F \from \mathrm{Lim}(\omega_2) \to L_{\omega_2}$ be such 
  that $F^{-1}(x)$ is unbounded in $\omega_2$ for all 
  $x \in L_{\omega_2}$. By \cite[Lemma 14]{FF2010}, we can take 
  $\vec{S}$ and $F$ to be $\Sigma_1$-definable over 
  $L_{\omega_2}$, 
  and moreover that whenever $\calM, \mathcal{N}$ are suitable 
  models with $\omega_1^\calM = \omega_1^{\mathcal{N}}$,
  then $\vec{S}^\calM$ and $\vec{S}^{\mathcal{N}}$ agree on 
  $\omega_2^\calM \intersection \omega_2^{\mathcal{N}}$.
Finally, for all $\alpha < \omega_2^L$, let $X_\alpha$ denote a 
sufficiently absolute code for 
$\alpha$.
Whereas the coding of the $\Delta_3^1$ wellorder was
achieved by the $C(Y)$-generic reals, the generic reals in the 
current context will result from almost disjoint coding, a
method developed by Solovay and Jensen \cite{solovayjensen}, and 
for which we give a general definition.

\begin{definition} \label{def ad coding}
Let $\calR$ be an almost disjoint family in $V$, and 
let $X\in V$ be a subset of $\omega_1$. The \emph{almost disjoint coding of }$X$\emph{with respect to }
$\calR$ is the partial order $\mathbb{P}_{\calR}(X)$
consisting of conditions $(s,F)$ such that 
$s$ is a finite subset of $\omega$ and $F$ is a finite subset 
of $\set{r_\xi}[\xi \in X] \subseteq \calR$. The extension relation is defined by letting 
$(t,G) \leq (s,F)$ if and only if 
\begin{enumerate}
\item $t$ end-extends $s$, $G \supseteq F$; 
\item For all $\xi \in X$ and $r_\xi \in F$, $(t \setminus s )
\intersection r_\xi = \emptyset$.
\end{enumerate}
\end{definition}

\begin{fact}
$\mathbb{P}_\calR(X)$ is $\sigma$-centered. If 
$G$ is $\mathbb{P}_\calR(X)$-generic over $V$, let
$a:= \bigcup \set{ s}[ \exists F (s,F) \in G].$ Then 
$G$ and $a$ are mutually definable in the generic extension, 
and in $V[G] = V[a]$, for all $\xi < \omega_1$, 
$  \xi \in X$ if and only if $ a \intersection r_{\xi}$ is 
finite.
\end{fact}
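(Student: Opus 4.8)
The plan is to dispatch the three assertions by elementary density arguments. For $\sigma$-centeredness I would observe that, for each fixed finite $s\subseteq\omega$, the set of conditions with first coordinate $s$ is centered: any $(s,F_0),\dots,(s,F_{k-1})$ admit the common lower bound $(s,F_0\cup\dots\cup F_{k-1})$. Since there are only countably many finite $s\subseteq\omega$, this exhibits $\mathbb{P}_\calR(X)$ as a countable union of centered pieces.

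For the coding property, I would first record the structural fact that, as $G$ is a filter, the first coordinates of conditions in $G$ are pairwise comparable under end-extension, so each of them is an initial segment of their union $a$ (that is, equal to $a\cap(\max(s)+1)$, vacuously so when empty). Fix $\xi<\omega_1$. If $\xi\in X$, the set $\{(t,H):r_\xi\in H\}$ is dense, so some $(t,H)\in G$ has $r_\xi\in H$, and clause (2) of Definition \ref{def ad coding} then forbids any element of $a\setminus t$ from lying in $r_\xi$; hence $a\cap r_\xi\subseteq t$ is finite. If $\xi\notin X$, then $r_\xi$ is almost disjoint from every $r_\eta$ with $\eta\in X$, so $r_\xi\setminus\bigcup F$ is infinite for every condition $(s,F)$; this makes $\{(t,H):t\cap r_\xi\cap[n,\omega)\neq\emptyset\}$ dense for each $n\in\omega$, and so $a\cap r_\xi$ is infinite. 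Thus $\xi\in X$ if and only if $a\cap r_\xi$ is finite.

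For mutual definability, that $a$ is definable from $G$ holds by the definition of $a$; conversely I would prove
\[
G=\{(s,F)\in\mathbb{P}_\calR(X): s\text{ is an initial segment of }a\text{ and }r_\xi\cap a\subseteq s\text{ for all }r_\xi\in F\},
\]
which exhibits $G$ as definable from $a$ and the ground-model parameters $\calR,X$, so $V[G]=V[a]$. The inclusion $\subseteq$ follows from the structural fact and clause (2): if $(s,F)\in G$, $r_\xi\in F$ and $m\in a\setminus s$, then intersecting $(s,F)$ with a condition of $G$ whose first coordinate contains $m$ would contradict clause (2) if $m\in r_\xi$. For $\supseteq$, given $(s,F)$ on the right-hand side, I would show it is compatible with every $(t,H)\in G$ — hence $(s,F)\in G$, since a generic filter contains every condition compatible with all of its members — by the case split: $s$ and $t$ are comparable initial segments of $a$; if $t$ end-extends $s$ then $(t,F\cup H)$ is a common lower bound (clause (2) for the $r_\xi\in F$ holds because $t\setminus s\subseteq a\setminus s$ is disjoint from each such $r_\xi$ by hypothesis), while if $s$ end-extends $t$ then $(s,F\cup H)$ works, this time using $(t,H)\in G$ to get that $s\setminus t\subseteq a\setminus t$ is disjoint from each $r_\xi\in H$.

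I expect the only mildly delicate point to be the $\supseteq$ direction of the recovery formula, where one must choose the correct first coordinate for the candidate common lower bound in each of the two cases and check clause (2) there; the remaining verifications are routine.
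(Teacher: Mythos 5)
Your proof is correct and complete: the centered decomposition by first coordinate, the two density arguments for the coding equivalence, and the recovery formula for $G$ from $a$ (including the compatibility argument via the dense set $\set{q}[q\leq p \text{ or } q\perp p]$) are all the standard argument for almost disjoint coding, and each use of clause (2) of Definition \ref{def ad coding} checks out. The paper itself omits the proof and defers to \cite{Harrington77} and \cite{Jech2nd}, so there is nothing to compare against; your write-up is exactly the expected one.
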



For proofs of the above, see, for example, \cite{Harrington77} or \cite[Example IV]{Jech2nd}. 
Using \cite[Proposition 3]{Friedmanzdomskyy} we fix an 
almost disjoint family
$\mathcal{R} = \set{R_{\langle \eta, \xi \rangle}}[\eta \in \omega
\cdot 2, \xi \in \omega_1]$ which is $\Sigma_1$-definable 
over $L_{\omega_1}$, and 
such that for every suitable model $\calM$, 
$\calR \intersection \calM = \set{R_{\eta, \xi}}[ \eta \in 
\omega \cdot 2, \xi \in \omega_1^\calM]$. 
For
technical reasons in the coding, we
modify the definition of the function $\Delta \from \omega^\omega \to 
\omega^\omega$ given in the previous section by letting, for $s \subseteq \omega$, finite or infinite, 
$\Delta(s) = \set{2n+1}[n \in s] \union \set{2n+2}[n \in (\sup s 
\setminus s)].$ Let $C(s) = \Delta(s) \union (\omega \setminus \max(\Delta(s))).$ We think of 
$C(s)$ as the ``coding area" associated with $s$. Denote by
$E(s), O(s)$ the sets 
$\set{s(2n)}[2n < \card{s}]$ and $\set{s(2n+1)}[2n+1 < \card{s}]$
respectively.
For a limit ordinal $\gamma$ and a function 
$r \from \gamma \to 2$, let $\mathrm{Even}(r) = \set{\alpha < \gamma}
[r(2\alpha) = 1]$ and $\mathrm{Odd}(r) = \set{\alpha < \gamma}
[r(2\alpha + 1) = 1]$. 
Lastly, for ordinals $\alpha < \beta$, let $\beta 
-\alpha$ denote the ordinal $\gamma$ 
such that $\alpha + \gamma = \beta$. If 
$B$ is a set of ordinals, let $B - \alpha = 
\set{ \beta - \alpha }[\beta \in \beta]$. 
If $\delta$ is an indecomposable ordinal then
$(\alpha + B) \intersection \delta - \alpha = 
B \intersection \delta$.

Continuing with the definition of $\langle \mathbb{P}_\alpha, \dot{\mathbb{Q}}_\beta
: \alpha \leq \omega_2, \beta < \omega_2 \rangle$ and the 
construction of a $\Pi_2^1$ tight mad family 
$\calA_2 = \set{a_\alpha}[\alpha < \omega_2
]$, 
suppose $\alpha < \omega_2$ and $\mathbb{P}_\alpha$ has been defined.
Let 
$G_\alpha$ be a $\mathbb{P}_\alpha$-generic filter, and let 
$\dot{\calA^2_\alpha}$ be a $\mathbb{P}_\alpha$-name for the set of elements 
of $\calA_2$ constructed up to stage $\alpha$. 
The density arguments of Lemma \ref{lem7} require the 
following inductive assumption:
\begin{equation} \label{eq:coding} \tag{$\ast$}
    \forall r \in \mathcal{R} \forall A' \in [\calA_\alpha^2]^{< \omega}
\ (\card{\mathrm{E}(r) \setminus \bigcup A' } = 
\card{\mathrm{O}(r) \setminus \bigcup A'} = 
\aleph_0 ),
\end{equation}

Since $\calR$ is an almost 
disjoint family, (\ref{eq:coding}) implies that 
for every $A' \in [\calA_\alpha^2 \union \calR]^{< \omega}$ and 
$r \in \calR \setminus A'$, also 
$\card{E(r) \setminus \bigcup A'} = \card{O(r) \setminus \bigcup A'} = 
\aleph_0$, as otherwise $r \intersection r'$ is infinite for 
some $r' \in A' \intersection \calR$.

If $\alpha < \omega_2$ is a successor ordinal, let 
$\dot{\mathbb{Q}}_\alpha = \dot{\mathbb{Q}}_\alpha^0$ be a 
$\mathbb{P}_\alpha$-name
for a proper poset of cardinality $\aleph_1$ such that
$\forces_{\mathbb{P}_\alpha} $``$\dot{\mathbb{Q}}_\alpha^0$ strongly
preserves the tightness of $\calA_1$''.
For limit $\alpha \in \omega_2$, 
unless explicitly mentioned otherwise, $\dot{\mathbb{Q}}_\alpha$ is a
$\mathbb{P}_\alpha$-name for the trivial poset.
Suppose
$F(\alpha)$ is a sequence
$\langle \dot{x_i} : i \in \omega \rangle$ of $\mathbb{P}_\alpha$-names 
such that $x_i = \dot{x_i}^{G_\alpha}$ is an infinite subset of 
$\omega$ such that for all $i \in \omega$, $x_i \in \mathcal{I}(
\calA^2_\alpha)^+$. 
The assumption $(\ast)$ and the almost disjointness of 
$\calR$ imply the existence of a limit ordinal 
$\eta_\alpha \in \omega_1$ with the property that there exist no finite subsets
$J, E$ of $\omega \cdot 2 \times (\omega_1 \setminus \eta_\alpha),
\calA^2_\alpha$, respectively, and $i \in \omega$ such that 
$x_i \subseteq \bigcup_{\langle \eta, \xi \rangle \in J} R_{\langle
\eta, \xi \rangle} \union \bigcup E$.
Fix such an $\eta_\alpha \in \omega_1$ as well as
$Z_\alpha \subset \omega$
coding a surjection of $\omega$ onto $\eta_\alpha$.
The following is the original definition from
\cite[Section 3]{Friedmanzdomskyy}.

\begin{definition}{(Friedman, Zdomskyy; \cite{Friedmanzdomskyy})} \label{def fz}
The partial order $\mathbb{K}_\alpha$ consists 
of all pairs 
$p = \langle \langle s, s^* \rangle, \langle c_k, y_k : k \in \omega 
\rangle \rangle$, such that: 

\begin{enumerate}[(1)]
\item $c_k \subseteq \omega_1 \setminus \eta_\alpha$ is closed 
bounded such that $S_{\alpha + k} \intersection c_k= 
\emptyset$; 
\item $y_k \from \card{y_k} \to 2$ is a function, where $\card{y_k} \in \omega_1$ is a limit, such that
\begin{itemize}
\item $\card{y_k} > \eta_\alpha$, $y_k \restriction \eta_\alpha = 0$; 
\item for all $\gamma < \card{y_k}$, $y_k(\eta_\alpha + 2\gamma) = 1$ if and only
if $\gamma = 0$ or 
$\gamma \in X_\alpha$, so that
$\mathrm{Even}(y_k) = (\set{\eta_\alpha}
\union (\eta_\alpha + X_\alpha))\intersection \card{y_k}$.\footnote{Recall $X_\alpha$
denotes the sufficiently absolute code for $\alpha$ given 
by Fact \ref{sufficiently absolute codes exist}.}
\end{itemize}

\item $s \in [\omega]^{< \omega}$ and $s^*$ is a finite 
subset of the set $\set{R_{\langle m, \xi \rangle}}[ m \in \Delta(s), \xi
\in c_m] \union \set{R_{\langle \omega + m, \xi \rangle}}[ m 
\in \Delta(s), \ y_m(\xi) = 1] \union \calA^2_\alpha$.
 Additionally, for all $n \in \omega$ such that 
$2n < \card{s \intersection R_{\langle 0,0 \rangle}}$,
$n \in Z_\alpha$ if and only if there exists $m \in \omega$ such that  
$(s \intersection R_{\langle 0, 0 \rangle})(2n) = 
R_{\langle 0 , 0 \rangle}(2m)$; 

\item  For all $k \in  C(s)$ and all limit 
$\gamma \in \omega_1$ such that 
$\eta_\alpha < \gamma \leq \card{y_k}$,
if $\gamma$ is a limit point of $c_k$ and $\gamma = \omega_1^\calM$ 
for some countable suitable model $\calM$ containing both
$y_k \restriction \gamma$ and $c_k \intersection \gamma$ as elements, 
then the following holds in $\calM$: 
``$[\mathrm{Even}(y_k) - \min (\mathrm{Even}(y_k))] 
\intersection \gamma$ 
is the code of some limit ordinal 
$\overline{\alpha} \in \omega_2$ such that 
$S_{\overline{\alpha} + k}$ is nonstationary.''
\end{enumerate}

\medskip
For $p = \langle \langle s, s^* \rangle, \langle c_k, y_k : k \in 
\omega \rangle \rangle$ and $q = \langle \langle t, t^* \rangle, \langle
d_k, z_k : k \in \omega \rangle \rangle$  
in $ \mathbb{K}_\alpha$, define $q$ to extend  
$p$ and write $q \leq p$ if and only if: 
\begin{enumerate}
\item $t$ end-extends $s$, $t^* \supseteq s^*$, and for all 
$x \in s^*$, $(t \setminus s) \intersection x = \emptyset$; 
\item For all $k \in C(t)$, $d_k$ end-extends $c_k$ and 
$z_k \supseteq y_k$. 
\end{enumerate}

For $p =\langle \langle s, s^* \rangle, \langle c_k, y_k : k \in 
\omega \rangle \rangle \in \mathbb{K}_\alpha$, let $\mathrm{Fin}(p) = 
\langle s, s^* \rangle$ denote the finite part of $p$, 
and let $\mathrm{Inf}(p) = \langle c_k, y_k : k \in 
\omega \rangle $ denote the infinite part of $p$. When 
$p,q \in \mathbb{K}_\alpha$ and $q \leq p$, we say $q$ 
is a \emph{pure extension of }$p$ if $\mathrm{Fin}(p) = 
\mathrm{Fin}(q)$.
\end{definition}

Proposition \ref{main} will give the properness of $\mathbb{K}_\alpha$, as well as the preservation of 
$\calA_1$.
%
For the proof however, we will need some intermediary lemmas:

\begin{lemma}[{\cite[Lemma 1]{FF2010}}] \label{extend1} For every $p = \langle \langle s, s^* \rangle, 
\langle c_k, y_k : k \in \omega \rangle \rangle$ in 
$\mathbb{K}_\alpha$ and every 
$\gamma \in \omega_1$, there exists a pure extension $q \leq p$ 
with $\mathrm{Inf}(q) = \langle d_k, z_k : k \in \omega \rangle$, such that $\card{z_k} \geq \gamma$ and $\max(d_k) \geq \gamma$, for
every $k \in \omega$. 
\end{lemma}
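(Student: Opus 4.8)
The plan is to extend each coordinate $k\in\omega$ separately, since the infinite parts of distinct coordinates impose no mutual constraints and $\mathrm{Fin}(p)$ will be left untouched; it then suffices, for each $k$, to produce $(d_k,z_k)$ extending $(c_k,y_k)$ with $\max(d_k)\geq\gamma$ and $\card{z_k}\geq\gamma$ such that clauses (1)--(4) of Definition \ref{def fz} remain true, and to set $q=\langle\langle s,s^*\rangle,\langle d_k,z_k:k\in\omega\rangle\rangle$. We may assume $\gamma$ is a limit ordinal with $\gamma>\eta_\alpha$. Using that $S_{\alpha+k}$ is costationary in $L$ (Proposition \ref{statcostat sequence}), I would pick $\delta_k\in\omega_1\setminus S_{\alpha+k}$ with $\delta_k>\max(\gamma,\sup c_k,\card{y_k},\eta_\alpha)$ and put $d_k=c_k\cup\{\delta_k\}$; this is a closed bounded subset of $\omega_1\setminus(S_{\alpha+k}\cup\eta_\alpha)$ with $\max(d_k)=\delta_k\geq\gamma$, and, $\delta_k$ being a top point strictly above $\sup c_k$, it is isolated in $d_k$, so $d_k$ has exactly the same limit points as $c_k$. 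Then I would fix a limit ordinal $\lambda_k>\delta_k$ and define $z_k\colon\lambda_k\to 2$ by $z_k\restriction\card{y_k}=y_k$, setting $z_k(\xi)=1$ for $\card{y_k}\leq\xi<\lambda_k$ exactly when $\xi=\eta_\alpha$ or $\xi\in\eta_\alpha+X_\alpha$ and $z_k(\xi)=0$ otherwise; this forces $z_k\restriction\eta_\alpha=0$ and $\mathrm{Even}(z_k)=(\{\eta_\alpha\}\cup(\eta_\alpha+X_\alpha))\cap\lambda_k$, so clauses (1)--(3) of Definition \ref{def fz} hold for $q$ immediately, and $q$ is a pure extension of $p$ (only the $c_k$'s and $y_k$'s are enlarged, $\langle s,s^*\rangle$ is fixed, and $d_k\supseteq c_k$, $z_k\supseteq y_k$ for every $k\in C(s)$).

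The substantive point is clause (4) for $q$ at each $k\in C(s)$. Any limit point $\gamma^*$ of $d_k$ with $\eta_\alpha<\gamma^*\leq\card{z_k}=\lambda_k$ is a limit point of $c_k$, hence $\gamma^*\leq\sup c_k<\delta_k$, so $d_k\cap\gamma^*=c_k\cap\gamma^*$ is a club in $\gamma^*$ disjoint from $S_{\alpha+k}$, and $z_k$ and $y_k$ agree below $\card{y_k}$. If $\gamma^*\leq\card{y_k}$, then $z_k\restriction\gamma^*=y_k\restriction\gamma^*$ and what is demanded of any countable suitable $\calM$ with $\omega_1^\calM=\gamma^*$ containing $d_k\cap\gamma^*$ and $z_k\restriction\gamma^*$ is precisely clause (4) for $p$, which holds by hypothesis. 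If $\card{y_k}<\gamma^*$, then $\gamma^*$ is additively indecomposable and $>\eta_\alpha$, so by the identity $(\beta+B)\cap\delta-\beta=B\cap\delta$ valid for indecomposable $\delta$ we get $[\mathrm{Even}(z_k\restriction\gamma^*)-\min\mathrm{Even}(z_k\restriction\gamma^*)]=(\{0\}\cup X_\alpha)\cap\gamma^*$; for such an $\calM$, the sufficient absoluteness of the code $X_\alpha$ (Definition \ref{sufficiently absolute code}, Fact \ref{sufficiently absolute codes exist}) supplies a well-defined $\overline\alpha\in\omega_2^\calM$ decoded from $X_\alpha\cap\omega_1^\calM$, and the coherence of $\vec S$ across suitable models (Proposition \ref{statcostat sequence}) ensures that inside $\calM$ the club $d_k\cap\gamma^*$ still avoids $S_{\overline\alpha+k}^\calM$, so $\calM\models$ ``$S_{\overline\alpha+k}$ is nonstationary'' --- which is the content of clause (4). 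Hence $q\in\mathbb{K}_\alpha$ and $q\leq p$. This is the same density argument used for \cite[Lemma 1]{FF2010} (cf.\ Lemma \ref{localization dense set}) and for the corresponding lemma in \cite[Section 3]{Friedmanzdomskyy}.

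The main obstacle is exactly this last verification of clause (4) at the ``newly exposed'' levels $\gamma^*\in(\card{y_k},\sup c_k]$: these were irrelevant to the validity of $p$ but become constrained once $\card{y_k}$ is enlarged. Everything at levels $\leq\card{y_k}$ is inherited verbatim from $p$, and the choice of $\delta_k$ as an isolated top point is precisely what keeps $d_k$ from acquiring any genuinely new limit point; what remains is to check that enlarging the localization component does not impose an unsatisfiable coding demand at those intermediate levels, and this rests entirely on the two features built into the construction --- the sufficient absoluteness of the codes $X_\alpha$ and the coherence of $\vec S$ --- which are the same ingredients that make the localization forcing $\mathcal{L}(\varphi)$ of the preceding section extendible.
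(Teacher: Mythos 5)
Your overall strategy is the right one and is essentially the argument behind the citation the paper leans on ([FF2010, Lemma 1] / \cite{Friedmanzdomskyy}): extend coordinatewise, add a single isolated top point $\delta_k\notin S_{\alpha+k}$ above everything so that $d_k$ acquires no new limit points, continue the even part of $y_k$ by the canonical code pattern, observe that clauses (1)--(3) and the levels $\gamma^*\leq\card{y_k}$ are inherited verbatim, and concentrate on clause (4) at the newly exposed limit points $\gamma^*\in(\card{y_k},\sup c_k]$. You also correctly identify that the witness to nonstationarity, namely $d_k\cap\gamma^*=c_k\cap\gamma^*$, is an element of $\calM$ by the very hypothesis of clause (4), which is why the odd part of $z_k$ can be padded with zeros here even though in $\mathcal{L}(\varphi)$ the odd part is exactly the degree of freedom needed to make the analogous extension lemma work; this disanalogy deserves a sentence rather than silence.

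The one genuine gap is the final step: ``the coherence of $\vec S$ across suitable models ensures that inside $\calM$ the club $d_k\cap\gamma^*$ still avoids $S^\calM_{\overline\alpha+k}$.'' What you actually need is that $S^\calM_{\overline\alpha+k}\subseteq S_{\alpha+k}$ (at least below $\gamma^*$), and Proposition \ref{statcostat sequence} as stated does not give this directly: it compares two suitable models with the same $\omega_1$ to \emph{each other}, or a suitable model with $\omega_1^\calM=\omega_1$ to the true $\vec S$; it says nothing by itself about how a countable suitable $\calM$ with $\omega_1^\calM=\gamma^*<\omega_1$ relates to the true $S_{\alpha+k}$. The missing ingredient is an \emph{anchor}: a suitable $\calN$ with $\omega_1^\calN=\gamma^*$ obtained by transitively collapsing a countable $N\prec L_{\omega_2}$ containing $\alpha$ and $X_\alpha$ with $N\cap\omega_1=\gamma^*$; in such an $\calN$ elementarity gives $S^\calN_{\pi(\alpha)+k}=S_{\alpha+k}\cap\gamma^*$ and the uniqueness clause of Definition \ref{sufficiently absolute code} identifies $\pi(\alpha)$ with the decoded $\overline\alpha$, after which coherence transfers this to your arbitrary $\calM$. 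But such an anchor exists only for $\gamma^*$ in a club of countable ordinals, so you must also argue that the newly exposed limit points at which clause (4) is non-vacuous (i.e., at which some suitable $\calM$ containing $z_k\restriction\gamma^*$ and $c_k\cap\gamma^*$ exists at all) fall into this club, or otherwise dispose of the remaining $\gamma^*$. Without this the claim that clause (4) holds at the intermediate levels is an assertion, and it is precisely the content of the cited lemma rather than a consequence of the facts you invoke.
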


The following notion appears implicitly in 
\cite{Friedmanzdomskyy}.

\begin{definition} \label{defpreprocessed}
For a condition
 $p = \langle \langle s, s^* \rangle, \langle c_k, y_k : k \in 
\omega \rangle \rangle$ in $\mathbb{K}_\alpha$ and open dense $D \subseteq 
\mathbb{K}_\alpha$,
 we say $p$ is \emph{preprocessed} for $D$ if and only if for
every extension $q= \langle \langle t, t^* \rangle, 
\langle d_k, z_k : k \in \omega \rangle \rangle  \leq p$, if $q \in D$,
then already there is some 
$t_2^*$ such that  $q' = \langle \langle t, t_2^* \rangle, 
\langle c_k, y_k : k \in \omega \rangle \rangle$ is a condition 
in $\mathbb{K}_\alpha$ extending $p$, and $q' \in D$.
\end{definition}

Note that if $p \in \mathbb{K}_\alpha$ is preprocessed for
$D$ and $r \leq p$, then $r$ is preprocessed for $D$.

\begin{lemma}[{\cite[Claim 9]{Friedmanzdomskyy}}] \label{preprocessed}
 For any $p \in \mathbb{K}_\alpha$ and open dense 
$D \subseteq \mathbb{K}_\alpha$, there exists a pure extension
$q \leq p$ such that $q$
is preprocessed for $D$. 
\end{lemma}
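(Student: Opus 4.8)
The plan is to obtain $q$ as the end of a fusion-style construction along an exhaustive bookkeeping of the relevant finite extensions. First I would fix $p = \langle \langle s, s^* \rangle, \langle c_k, y_k : k \in \omega \rangle \rangle$ and enumerate all pairs $(t, t^*)$ that could serve as the finite part of an extension of $p$ lying in $D$: concretely, $t \in [\omega]^{<\omega}$ end-extending $s$ together with a finite $t^* \subseteq \set{R_{\langle m,\xi\rangle}}[m \in \Delta(t),\ \xi < \omega_1] \union \set{R_{\langle \omega+m,\xi\rangle}}[m\in\Delta(t)] \union \calA_\alpha^2$. There are only countably many such $(t,t^*)$, so enumerate them as $\langle (t_n, t_n^*) : n \in \omega \rangle$. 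I would then build a pure-extension sequence $p = q_0 \geq q_1 \geq q_2 \geq \cdots$ where each $q_{n+1}$ is a pure extension of $q_n$ that ``settles'' the $n$-th pair: if there is \emph{some} extension of $q_n$ with finite part $(t_n, u^*)$ for some $u^*$ lying in $D$, then arrange that $q_{n+1}$ already admits such an extension with its \emph{own} infinite part, i.e. $\langle\langle t_n, u^*\rangle, \mathrm{Inf}(q_{n+1})\rangle \in D$; otherwise set $q_{n+1} = q_n$. The point of using $D$ open dense is that once we have \emph{any} extension witnessing $D$-membership through finite part $t_n$, openness lets us enlarge the infinite part to agree with whatever we have built so far.

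The key difficulty is the two-dimensional nature of the infinite part: extending $c_k$ and $z_k$ for one value of $k$ must not conflict with commitments already made for other $k$'s, and crucially Definition~\ref{def fz}(4) — the requirement that limit points of $c_k$ which are of the form $\omega_1^\calM$ reflect the nonstationarity statement — must be maintainable throughout. This is exactly the obstacle that Lemma~\ref{extend1} is designed to handle: it guarantees that pure extensions lengthening all the $c_k$ and $z_k$ simultaneously exist, so the construction never gets stuck. I would use Lemma~\ref{extend1} at each stage to first lengthen all $(c_k, y_k)$ beyond the supremum of the ordinals appearing in the candidate witness for pair $n$, and only then merge in the witness's infinite part below that level. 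Because $D$ is open, the merged condition still lies in $D$.

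To finish, I would take the ``fusion'' $q$ of the sequence $\langle q_n : n \in \omega\rangle$: since each $q_{n+1}$ is a pure extension of $q_n$ and the constructions at stage $n$ only modify the infinite part on levels above a bound $\gamma_n \to \omega_1$ (using Lemma~\ref{extend1} to arrange $\gamma_n$ strictly increasing and cofinal), the coordinatewise unions $c_k^\infty = \bigcup_n c_k^{(n)}$ (capped at the first level where stage-$n$ stopped moving it) and $z_k^\infty = \bigcup_n z_k^{(n)}$, together with the unchanged finite part $\langle s, s^*\rangle$, yield a genuine condition $q \in \mathbb{K}_\alpha$ — one checks clauses (1)–(4) of Definition~\ref{def fz} pass to the limit, clause (4) being the one that needs the care described above. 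By construction $q$ is a pure extension of $p$, and $q$ is preprocessed for $D$: given any $q' = \langle\langle t, t^*\rangle, \langle d_k, z_k : k\in\omega\rangle\rangle \leq q$ with $q' \in D$, the finite part $(t,t^*)$ is some $(t_n, t_n^*)$ in the enumeration, so at stage $n$ the ``some extension in $D$'' hypothesis held, hence $q$ (being a pure extension of $q_{n+1}$) admits an extension with finite part $(t_n, u^*)$ and infinite part $\mathrm{Inf}(q)$ lying in $D$; this is precisely a condition of the form demanded by Definition~\ref{defpreprocessed}, extending $p$ and in $D$. The main obstacle, as flagged, is verifying that clause (4) survives the fusion limit, which is where the model-reflection bookkeeping has to be threaded through Lemma~\ref{extend1}'s applications.
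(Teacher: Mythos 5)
Your overall skeleton (enumerate candidate finite parts, build a descending chain of pure extensions using openness of $D$ and Lemma \ref{extend1}, then fuse) is the right shape, but two points do not survive scrutiny. First, a repairable miscount: the pairs $(t,t^*)$ are \emph{not} countable in general, since $t^*$ ranges over finite subsets of $\set{R_{\langle m,\xi\rangle}}[m\in\Delta(t),\ \xi<\omega_1]\union\set{R_{\langle\omega+m,\xi\rangle}}[m\in\Delta(t),\ \xi<\omega_1]\union\calA^2_\alpha$, a set of size $\aleph_1$ once $\alpha\geq\omega_1$ (and already because $\xi$ ranges over $\omega_1$). So the enumeration you posit does not exist, and your closing step ``the finite part $(t,t^*)$ is some $(t_n,t_n^*)$ in the enumeration'' fails as written. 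The fix is easy and implicit in Definition \ref{defpreprocessed}: the witness $q'$ only needs \emph{some} $t_2^*$ and depends only on the first coordinate $t$, so it suffices to enumerate the countably many finite $t\subseteq\omega$ end-extending $s$; your stage-$n$ action in fact only uses $t_n$, so this is cosmetic, but the claim as stated is false.

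The genuine gap is at the fusion. Since each $C(t_n)$ is cofinite and each application of Lemma \ref{extend1} lengthens every coordinate, every relevant $c_k$ is end-extended at infinitely many stages (incidentally, ``$\gamma_n\to\omega_1$'' is impossible for a countable sequence; the $\gamma_n$ converge to some countable $\gamma$). Hence $\bigcup_n c_k^{(n)}$ is unbounded in $\gamma$ but omits $\gamma$, so it is \emph{not closed}; ``capping at the first level where stage-$n$ stopped moving it'' is unavailable because the coordinate never stops moving. The only way to close off is to adjoin the top point $\gamma$ to each such $c_k$, and then two verifications are needed that your plan does not supply: clause (1) of Definition \ref{def fz} forces $\gamma\notin S_{\alpha+k}$ for every such $k$, so $\gamma$ must be chosen in advance outside $\bigcup_k S_{\alpha+k}$ (e.g.\ by running the construction inside a countable $M\prec H_\theta$ with $M\intersection\omega_1$ in the stationary set $S_{-1}$ of Proposition \ref{statcostat sequence}) --- this is exactly the hypothesis $j\notin\bigcup_{k\in C(s)}S_{\alpha+k}$ that resurfaces in Proposition \ref{main}; and clause (4) must be checked at the new limit point $\gamma$, which requires the coherence machinery (sufficiently absolute codes, the $\Sigma_1$-coherent $\vec S$ across suitable models) to see that any suitable $\calM$ with $\omega_1^\calM=\gamma$ containing $c_k\intersection\gamma$ and $y_k\restriction\gamma$ interprets $\mathrm{Even}(y_k)$ as a code for some $\overline{\alpha}$ and $c_k\intersection\gamma$ as a club killing $S_{\overline{\alpha}+k}$. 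You flag clause (4) as the obstacle but offer no mechanism, and the clause (1)/choice-of-$\gamma$ issue is missing entirely. Note also that the paper does not prove this lemma but cites \cite[Claim 9]{Friedmanzdomskyy}, and the limit verification is precisely what the proof of Proposition \ref{main} defers back to ``the proof of Lemma \ref{preprocessed}'', so it is the essential content and cannot be left as a remark.
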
 


\begin{lemma} \label{pure outer hull}
Let $q \in \mathbb{K}_\alpha \intersection M$, where 
$M \prec H_\theta$ is a countable elementary submodel containing 
$\mathbb{K}_\alpha$ and $\calA_1$, and let 
$\dot{Z} \in M$ be a $\mathbb{K}_\alpha$-name for an element of 
$\calI(\calA_1)^+$.  Then 
$$W = \set{m \in \omega}[\exists p \leq q \ (\mathrm{Fin}(p) = 
\mathrm{Fin}(q) \wedge p \forces m\ \in \dot{Z})]$$
is an element of $\calI(\calA_1)^+ \intersection M$. 
\end{lemma}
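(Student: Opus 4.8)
### Proof Proposal

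The plan is to prove this by an ``outer hull'' argument analogous to Claim~\ref{claim outer hull creature}, where the refined notion of outer hull used is the \emph{pure outer hull} $W$: the set of $m$ forced into $\dot Z$ by some \emph{pure} extension of $q$. First I would observe that $W \in M$ since it is defined from the forcing relation applied to the parameters $\mathbb{K}_\alpha$, $q$, and $\dot Z$, all of which lie in $M$ by hypothesis. The real content is showing $W \notin \calI(\calA_1)$, i.e.\ that for every finite $F \subseteq \calA_1$ the set $W \setminus \bigcup F$ is infinite.

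To do this, fix a finite $F \subseteq \calA_1$ and a bound $k \in \omega$; I would aim to find $m > k$ with $m \in W \setminus \bigcup F$. Since $q \forces \dot Z \in \calI(\calA_1)^+$, in particular $q$ forces $\dot Z \setminus \bigcup F$ to be infinite, so let $\dot C$ name $\dot Z \setminus \bigcup F$ and let $\dot C(j)$ name its $j$-th element; for $j > k$ any value of $\dot C(j)$ exceeds $k$ (here I use that $\dot C(j)\ge j$, or at least that it can be arranged to exceed $k$ by choosing $j$ large). Now the key step: I need an analogue of Lemma~\ref{stepA}/Corollary~\ref{n+1 extension of stepB} for $\mathbb{K}_\alpha$, namely that given $\dot C$ a name for an infinite set and given $q$, there is a pure extension $p \le q$ together with a ``positivity'' structure on the finite part such that every way of enlarging $\mathrm{Fin}(p)$ compatibly decides $\dot C(j)$. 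Concretely, I would use Lemma~\ref{preprocessed} to pass to a pure extension $q' \le q$ preprocessed for the open dense set $E_{C(j)}$ of conditions deciding $\dot C(j)$. Then, working below $q'$, pick any extension $p \le q'$ deciding $\dot C(j)$ as some value $m_j$; since $q'$ is preprocessed, the decision is witnessed already by a condition whose finite part only enlarges $s$ (and picks a new $t^*_2$) — so there is a pure-modulo-finite-part witness, hence $m_j \in W$. Moreover $m_j \notin \bigcup F$ since $p \forces m_j \in \dot C$ and $\dot C$ avoids $\bigcup F$. Letting $j \to \infty$ gives infinitely many such $m_j > k$, so $W \setminus \bigcup F$ is infinite, and as $F$ was arbitrary, $W \notin \calI(\calA_1)$.

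The main obstacle is making precise the interaction between ``pure extension'' (which freezes $\mathrm{Fin}(p) = \langle s, s^* \rangle$ entirely) and the extensions of $\mathbb{K}_\alpha$ that decide $\dot C(j)$, which necessarily enlarge the finite set $s$ (one must read off elements of $\dot C$ from a generic-real approximation). The definition of $W$ freezes $\mathrm{Fin}$, so to land a value $m$ into $W$ I must find a witness whose finite part is exactly $\mathrm{Fin}(q)$ — and a condition deciding $\dot C(j)$ in general will have a longer $s$. The resolution is that the \emph{decision} of $m \in \dot Z$ (as opposed to $m$ being the $j$-th element) only requires that $m$ be forced into $\dot Z$, and by an argument using Lemma~\ref{preprocessed} together with the structure of $\mathbb{K}_\alpha$ (the infinite coding part is largely independent of which specific elements enter $s$, and $Z$ being a name for a subset of $\omega$ rather than the generic real itself), one shows the statement ``$m \in \dot Z$'' for a concrete integer $m$ can always be forced by a pure extension once it can be forced at all below $q$ — this is exactly the content of Fact~\ref{outer hull in filter fact} adapted to pure extensions, and is where the preprocessing lemma does the work. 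I would also double-check that Lemma~\ref{extend1} is available in case the construction needs to extend the infinite parts $c_k, y_k$ to sufficient length before reading off decisions, but this should be routine bookkeeping rather than a genuine difficulty.
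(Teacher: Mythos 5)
Your proposal correctly isolates the crux (witnesses for $W$ must have finite part \emph{exactly} $\mathrm{Fin}(q)$, whereas decisions about $\dot C(j)$ seem to require growing the finite part), but the mechanism you offer to resolve it does not work. By Definition~\ref{defpreprocessed}, preprocessing converts a witness $r\leq q'$ lying in a dense set into a condition with the \emph{same infinite part} as $q'$ and finite part $\langle t, t_2^*\rangle$, where $t$ is the (typically longer) finite set of $r$: it trivializes the infinite side of the witness while keeping the enlarged finite side. That is precisely the opposite of what membership in $W$ demands, namely frozen finite part and free infinite part. (Indeed, in the paper preprocessing is used only for the $(M,\mathbb{K}_\alpha)$-genericity/dense-set portion of Proposition~\ref{main}, not for the outer hull.) So your step ``since $q'$ is preprocessed, the decision is witnessed by a condition whose finite part only enlarges $s$, hence $m_j\in W$'' is a non sequitur: such a witness has $\mathrm{Fin}\neq\mathrm{Fin}(q)$ and does not place $m_j$ in $W$.

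The repair you then invoke --- that ``$m\in\dot Z$ can always be forced by a pure extension once it can be forced at all below $q$'' --- is exactly the content that needs proof (it would make the pure outer hull coincide with the full outer hull of Fact~\ref{outer hull in filter fact}), it does not follow from Lemma~\ref{preprocessed}, and it is untenable in this generality: nothing prevents a name $\dot Z\in M$ from being defined by cases on the finite part of the generic, e.g.\ forced to equal one of two disjoint ground-model sets in $\calI(\calA_1)^+$ according to whether the first element of $a_\alpha$ above $\max(s)$ lies in $R_{\langle 0,0\rangle}$; no condition with finite part exactly $\mathrm{Fin}(q)$ decides such a case split, so pure forceability of individual integers genuinely differs from forceability below $q$. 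The paper's proof proceeds differently: it proves an auxiliary decision lemma --- for any name $\dot X$ for an infinite subset of $\omega$ and any $j$, some condition with the same finite part as $q$ decides the value $\dot X(j)$ --- by isolating the infinite side of $\mathbb{K}_\alpha$ as the countable support product $\prod_{k\in C(s)} Q^{\eta_\alpha}(S_{\alpha+k})\times\calL^{\eta_\alpha}(Y_{\alpha+k})$ of club-shooting and localization components (Definitions~\ref{def club shooting} and~\ref{localization forcing def}) and making the decision entirely there; it then applies this to $\dot X$ naming the increasing enumeration of $\dot Z\setminus\bigcup F$, so the decided values $m_j\geq j$ land in $W\setminus\bigcup F$ unboundedly. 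Your observations that $W\in M$ and that Lemma~\ref{extend1} handles lengthening the infinite parts are fine, but without a substitute for this decision step --- something playing the role Lemma~\ref{stepA} plays for $\mathbb{Q}$, with the finite part genuinely frozen rather than merely ``pure modulo the finite part'' --- the proposal does not prove the lemma.
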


\begin{proof}
Fix a finite $F \subseteq \calA_1 \intersection M$.
By assumption on $\dot{Z}$ we have that
$q \forces_{\mathbb{K}_\alpha} \dot{Z} \setminus
\bigcup F  \in 
\infsubsets$.

\begin{lemma}
For all $q \in \mathbb{K}_\alpha$ and $\dot{X}$ a 
$\mathbb{K}_\alpha$-name for an infinite subset of $\omega$, 
 there exists 
$p \leq q$ with $\mathrm{Fin}(p)= \mathrm{Fin}(q)$ and 
there exists $m^p \in \omega$ such that 
$p \forces m^p = \dot{X}(j)$. 
\end{lemma}

\begin{proof}
Fix $q = \langle \langle s, s^* \rangle, \langle c_k, y_k : k \in \omega 
\rangle \rangle$, $\dot{X}$ as above.
Consider the countable support product
$\mathbb{P}_s := \prod_{k \in C(s)} Q^{\eta_\alpha}(S_{\alpha+ k})
\times \calL^{\eta_\alpha}(Y_{\alpha+k})$, 
where
 $Q^{\eta_\alpha}(S_{\alpha+k})$ consists of closed bounded subsets $c_k \subseteq \omega_1 
\setminus \eta_\alpha$ such that 
$c_k \intersection S_{\alpha + k } = \emptyset$ and is 
ordered by end extension; the partial 
order 
$\calL^{\eta_\alpha}(Y_{\alpha+k})$ consists of 
functions 
$y_k \from \card{y_k} \to 2$ with domain $\card{y_k}$, such that 
\begin{itemize}
\item $\card{y_k} \in \omega_1 \setminus \eta_\alpha$ is a countable
limit ordinal and $y_k \restriction \eta_\alpha = 0$; 
\item $\mathrm{Even}(y_k) = 
(\set{\eta_\alpha} \union (\eta_\alpha + 
X_\alpha)) \intersection \card{y_k}$;
\item for all $\gamma \leq \card{y_k}$, if 
$\gamma = \omega_1^\calM$ for a suitable $\calM$ with 
$y_k \restriction \gamma \in \calM$ and $\gamma$ is a limit 
point of $c_k$, then 
$\calM \models $``$\mathrm{E ven}(y_k)$ is the code for 
some $\overline{\alpha} \in \omega_1$ such that 
$S_{\overline{\alpha} + k}$ is nonstationary ''. 
\end{itemize}
$\calL^{\eta_\alpha}(Y_{\alpha+k})$ is ordered by end-extension.
For notational simplicity we will suppress the superscript 
$\eta_\alpha$ in what follows. The ordering on $\mathbb{P}_s$ is given by 
$\langle d_k , z_k : k \in \omega \rangle 
\leq \langle c_k, y_k : k \in \omega \rangle$
if and only if $d_k$ is an end-extension of $c_k$ and 
$z_k \supseteq y_k$, i.e., if and only if 
$(d_k, z_k) \leq_{Q(S_{\alpha+k}) \times \calL(Y_{\alpha+k})} 
(c_k, y_k)$. Here, $\leq_{Q(S_{\alpha+k})}$ denotes the ordering 
of the club shooting forcing of Definition \ref{def club shooting}, however with the modification that 
condition are closed bounded subsets containing only 
ordinals strictly greater than $\eta_\alpha$. 
Likewise, $\leq_{\calL(Y_{\alpha+k})}$ denotes
the extension relation for the localization forcing of 
Definition \ref{localization forcing def}, 
where the $\Sigma_1$-formula being localized is 
$\varphi(\omega_1, X_\alpha)$ which asserts 
``$\mathrm{Even}(y_k) - \min(\mathrm{Even}(y_k))$ is the code for 
some $\overline{\alpha} \in \omega_1$ such that 
$S_{\overline{\alpha} + k}$ is nonstationary''.
Note that if $\langle c_k, y_k : k \in \omega \rangle 
\forces_{\mathbb{P}_s} \dot{X}(j)  = m_j$, then 
$\langle \langle \emptyset, \emptyset
\rangle, \langle c_k, y_k : k \in \omega \rangle \rangle 
\forces_{\mathbb{K}_\alpha} \dot{X}(j) = m_j$. Moreover
$\mathbb{P}_s$ is a complete suborder of 
$\mathbb{P}_{\emptyset}$.

For all $k \in C(s)$, find 
$c_k', y_k'$ such that $c_k' \leq_{Q(S_{\alpha+k})} c_k$
and $y_k' \leq_{\calL(Y_{\alpha + k})} y_k$, and 
$(c_k, y_k) \forces_{Q(S_{\alpha+ k}) \times 
\calL(Y_{\alpha+k})} \dot{X}(j) = \check{m}_j$
for some $m_j \in \omega$. 
Then $\langle c_k', y_k' : k \in \omega \rangle \in 
\mathbb{P}_s$ is an extension of 
$\langle c_k, y_k : k \in \omega \rangle$ and forces 
$\dot{X}(j) = \check{m}_j$. 
Therefore $\langle \mathrm{Fin}(q), \langle c_k', y_k' : k \in 
\omega \rangle \rangle \leq_{\mathbb{K}_\alpha} q$ and 
decides $\dot{X}(j)$.
\end{proof}

Now, letting 
$\dot{X}$ be a $\mathbb{K}_\alpha$-name for 
$\dot{Z} \setminus \bigcup F$, 
for every $k \in \omega$ there exists $j > k$ and 
a pure extension
$q_j \leq q$ in $\mathbb{K}_\alpha$,  
and there exists $m_j \in \omega$ with 
$q \forces \dot{X}(j) = \check{m}_j$. 
Then $$Y_k = \set{m_j}[j > k, q_j \leq q \wedge q_j \forces 
\check{m}_j = \dot{X}(j)]$$ is an infinite set witnessing $W \setminus \bigcup F$ is infinite.  
\end{proof}


\begin{proposition}\label{main}
For every $p = 
\langle \langle s, s^* \rangle, \langle c_k, y_k : k \in \omega \rangle
\rangle \in \mathbb{K}_\alpha$, every $\theta$ sufficiently large 
and countable elementary submodel $M \prec H_\theta$ containing
$p, \mathbb{K}_\alpha, \calA_1$, and every $B \in \mathcal{I}(\calA_1)$ such 
that $B \intersection Y$ is infinite for all $Y \in \mathcal{I}
(\calA_1)^+ \intersection M$, if $M \intersection \omega_1 = j
\not\in \bigcup_{k \in C(s)} S_{\alpha+k}$, then there is 
an $(M, \mathbb{K}_\alpha, \calA_1, B)$-generic condition $q \leq p$
such that $\mathrm{Fin}(q) = \mathrm{Fin}(p)$. 
\end{proposition}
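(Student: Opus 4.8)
The plan is to run a fusion argument as in the proof of Theorem~\ref{Prop Q strongly preserves tightness}, with the club-shooting and localization coordinates of $\mathbb{K}_\alpha$ handled as in the properness proof of \cite{Friedmanzdomskyy} and the $S$-properness requirement absorbed by using the hypothesis $j \notin \bigcup_{k \in C(s)} S_{\alpha+k}$, where $j = M \cap \omega_1$, to close the accumulated generic clubs at $j$. Fix enumerations $\{D_n : n \in \omega\}$ of all open dense subsets of $\mathbb{K}_\alpha$ in $M$ and $\{\dot{Z}_n : n \in \omega\}$ of all $\mathbb{K}_\alpha$-names in $M$ forced to lie in $\mathcal{I}(\calA_1)^+$, each listed infinitely often, and an increasing sequence $\langle \gamma_n : n \in \omega \rangle$ of ordinals of $M$ cofinal in $j$. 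Then recursively build pure extensions $p = p_0 \geq p_1 \geq \cdots$, all in $M$, with $\mathrm{Fin}(p_n) = \mathrm{Fin}(p)$ and, writing $\mathrm{Inf}(p_{n+1}) = \langle c_k^{n+1}, y_k^{n+1} : k \in \omega \rangle$: (a) $p_{n+1}$ is preprocessed for $D_n$ (Lemma~\ref{preprocessed}); (b) $\max(c_k^{n+1}) \geq \gamma_n$ and $\card{y_k^{n+1}} \geq \gamma_n$ for $k \in C(s)$ (Lemma~\ref{extend1}); and (c) there is $m_{n+1} \in B$ with $m_{n+1} > n$ and $p_{n+1} \forces m_{n+1} \in \dot{Z}_n$. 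For (c), apply Lemma~\ref{pure outer hull} to $p_n$ and $\dot{Z}_n$ to get $W_{n+1} \in \mathcal{I}(\calA_1)^+ \cap M$; as $B \cap W_{n+1}$ is infinite by hypothesis, pick $m_{n+1} \in B \cap W_{n+1}$ with $m_{n+1} > n$ and a pure extension of $p_n$ forcing $m_{n+1} \in \dot{Z}_n$, then pure-extend further to secure (a) and (b); since pure extensions preserve (c) and (a), and such a $p_{n+1}$ exists, by elementarity one can be found in $M$.

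Next form the fusion $q$: set $\mathrm{Fin}(q) = \mathrm{Fin}(p)$, keep the coordinates $k \notin C(s)$ as in $\mathrm{Inf}(p)$, and for $k \in C(s)$ put $c_k^q = \bigcup_n c_k^n \cup \{j\}$ and let $y_k^q = \bigcup_n y_k^n$, a function of domain $j$. The heart of the proof is checking $q \in \mathbb{K}_\alpha$. Clauses (1)--(3) of Definition~\ref{def fz} are routine: $c_k^q$ is closed bounded with maximum $j$ and is disjoint from $S_{\alpha+k}$ precisely because $j \notin S_{\alpha+k}$ for $k \in C(s)$ --- this is the only place the $S$-properness hypothesis is used --- while $\mathrm{Even}(y_k^q)$, the value $y_k^q \restriction \eta_\alpha$, and the $Z_\alpha$-clause are inherited since the finite part is constant. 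Clause (4) at a limit point $\gamma < j$ of $\bigcup_n c_k^n$ is inherited from some $p_n$ (such $\gamma$ is a limit point of $c_k^n$ for all large $n$, with identical parameters $y_k^n \restriction \gamma = y_k^q \restriction \gamma$). The crux is clause (4) at $\gamma = j$: given a countable suitable $\calM$ with $\omega_1^\calM = j$ containing $y_k^q \restriction j$ and $c_k^q \cap j = \bigcup_n c_k^n$, note that $\mathrm{Even}(y_k^q) - \min \mathrm{Even}(y_k^q)$ is, up to a point, the sufficiently absolute code $X_\alpha$ for $\alpha$, so in $\calM$ it codes $\calM$'s copy $\overline{\alpha}$ of $\alpha$, and by the absoluteness of $\vec{S}$ and of sufficiently absolute codes (Proposition~\ref{statcostat sequence}, Fact~\ref{sufficiently absolute codes exist}) $\calM$ computes $S_{\overline{\alpha}+k}$ as $S_{\alpha+k} \cap j$; since $\bigcup_n c_k^n \in \calM$ is a club in $j$ disjoint from $S_{\alpha+k}$, $\calM$ sees $S_{\overline{\alpha}+k}$ is nonstationary, as clause (4) demands.

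It remains to check the three conclusions. Plainly $q \leq p$ with $\mathrm{Fin}(q) = \mathrm{Fin}(p)$. That $q$ is $(M, \mathbb{K}_\alpha)$-generic is the usual preprocessing argument: given $r \leq q$ and $n$, extend $r$ into $D_n$; since the witness lies below $p_{n+1}$ and $p_{n+1}$ is preprocessed for $D_n$, one obtains $q' \in D_n \cap M$ with the same finite first coordinate as that witness and infinite part $\mathrm{Inf}(p_{n+1})$, which is then compatible with $r$. For the tightness clause, any name $\dot{Z} \in M[\dot{G}]$ forced by $q$ to lie in $\mathcal{I}(\calA_1)^+$ may be replaced by a name in $M$ forced to lie in $\mathcal{I}(\calA_1)^+$ and forced by $q$ to equal $\dot{Z}$; this name equals $\dot{Z}_n$ for infinitely many $n$, and for each such $n$, $q \leq p_{n+1} \forces m_{n+1} \in \dot{Z}_n$ with $m_{n+1} \in B$ and $m_{n+1} > n$, so $q \forces \card{\dot{Z} \cap B} = \omega$. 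Thus $q$ is an $(M, \mathbb{K}_\alpha, \calA_1, B)$-generic condition with $\mathrm{Fin}(q) = \mathrm{Fin}(p)$, as required. The step I expect to be the main obstacle is clause (4) at $\gamma = j$ --- reconciling the accumulated club $\bigcup_n c_k^n \cup \{j\}$ with the localization requirement inside every suitable $\calM$, which relies both on the $S$-properness hypothesis and on the absoluteness of the coding apparatus; the interplay of the outer-hull step (Lemma~\ref{pure outer hull}) with the fusion and with the length-extension (Lemma~\ref{extend1}) is comparatively routine bookkeeping.
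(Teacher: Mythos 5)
Your proof is correct and follows essentially the same route as the paper's: a fusion of pure extensions inside $M$ combining preprocessing for the dense sets (Lemma~\ref{preprocessed}), the length-extension lemma (Lemma~\ref{extend1}), and the pure outer hull (Lemma~\ref{pure outer hull}), closed off at $j = M\cap\omega_1$ using the hypothesis $j\not\in\bigcup_{k\in C(s)}S_{\alpha+k}$. The only notable difference is that you spell out the verification that the fused condition satisfies clause (4) of Definition~\ref{def fz} at $\gamma=j$, which the paper defers to the argument of Lemma~\ref{preprocessed}.
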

\begin{proof}
Let $\theta$ be a sufficiently large regular cardinal and let
$M \prec H_{\theta}$ be a countable elementary submodel containing 
$p, \mathbb{K}_\alpha$ and $\calA_1$, such that 
$j = M \intersection \omega_1 \not\in 
\bigcup_{k \in C(s)} S_{\alpha + k}$. Fix $B \in \calI(\calA_1)$
such that $B \intersection Y$ is infinite for all 
$Y \in \calI(\calA_1)^+ \intersection M$. 
Let $\set{D_n}[n \in \omega]$ enumerate all open dense subsets of 
$\mathbb{K}_\alpha$ in $M$, and let $\set{\dot{Z_n}}[n \in \omega]$ enumerate
all $\mathbb{K}_\alpha$-names for subsets of $\omega$ in $M$ which
are forced to be in $\mathcal{I}(\calA_1)^+$ such that each name appears
infinitely often. Let $\langle j_n : n \in \omega \rangle$ 
be an increasing cofinal sequence of ordinals converging to $j$. 
We inductively define a descending sequence $\langle q_n : n \in \omega \rangle
\subseteq M \intersection \mathbb{K}_\alpha$ 
where $q_n = \langle \langle s, s^* \rangle, \langle d_k^n, 
c_k^n : k \in \omega \rangle \rangle$ and such that: 
\begin{enumerate}
\item $d_k^0 = c_k$, $z_k^0 = y_k$; 
\item For all $n \in \omega$ and $k \in C(s)$, 
$d_k^{n+1}$ is an end-extension of $d_k^n$ and
$z_k^{n+1} \supseteq z_k^n$; 
\item $\max(d_k^{n+1}), \card{z_k^{n+1}} \geq j_n$; 
\item $q_{n+1}$ is preprocessed for $D_n$;
\item $q_{n+1} \forces (\dot{Z}_n \intersection B) \setminus
n \neq \emptyset$.
\end{enumerate}

Assume $q_n$ has been constructed. 
First extend $q_n$ with a pure extension $q_n'$ such that 
$q_n$ is preprocessed for $D_n$. Next let
$$W_{n+1} = \set{m \in \omega}[\exists r \leq q_n' 
(\mathrm{Fin}(r)  =
\mathrm{Fin}(q) \wedge \ r \forces m \in \dot{Z}_n)].$$
Since $W_{n+1} \in \mathcal{I}(\calA_1)^+$ by Lemma \ref{pure outer hull},
 fix 
$m_n \in \omega$ such that $m_n > n$ and $m_n \in 
W_{n+1} \intersection B$. 
Let $r \leq q_n'$ be given by 
$m_n \in W$, and let $q_{n+1} \leq r$ 
be a pure extension of $r$ such that 
$\max(d_k^{n+1}) \geq j_n$ and $\card{z_k^{n+1}} \geq j_n$. 
Then $q_{n+1}$ satisfies the above  clauses, so this completes the 
inductive construction.

Set $d_k = \bigcup_{n \in \omega}d_k^n \union \set{j}$ and
$z_k = \bigcup_{n \in \omega}z_k^n$ for all $k \in C(s)$, 
and for $k \not\in C(s)$, let $d_k = z_k = \emptyset$. 
Define $q:=  \langle \langle s, s^* \rangle, \langle d_k, z_k : k \in \omega 
\rangle \rangle$. Then $q\in \mathbb{K}_\alpha$, which can be 
verified as in the proof of Lemma \ref{preprocessed}.
It remains to see that 
$q$ is an $(M, \mathbb{K}_\alpha, \calA_1, B)$-generic
condition.

First we show $q$ is $(M, \mathbb{K}_\alpha)$-generic by showing 
that for all $n \in \omega$, $D_n \intersection M$ is predense 
below $q$. Fix $n$ and let $r = \langle \langle t, t^* \rangle,
\langle d_k', z_k' : k \in \omega \rangle \rangle \leq q$, 
and we can assume $r \in D_n$. 
Then as $r \leq q_{n+1}$ and $q_{n+1}$ is preprocessed for $D_n$,
there is $r' = \langle \langle t, t_2^* \rangle, \langle
d_k^{n+1}, z_k^{n+1} : k \in \omega \rangle \leq q_{n+1}$ for 
some finite $t_2^* \in M$, such that already $r' \in D_n$. 
Clearly $r' \in M$. Then $r$ and $r'$ are compatible, as witnessed
by the condition $\langle \langle t, t^* \union t_2^* \rangle,
\langle d_k', z_k' : k \in \omega \rangle \rangle$. 

Lastly, for all $n \in \omega$ we have
$q \forces \card{\dot{Z}_n \intersection B} = \omega$. Let $\ell \in 
\omega$, and take any $r \leq q$. Find $i > \ell$
such that $\dot{Z}_i = \dot{Z}_n$; then 
$r \leq q_{i+1}$ and so since $q_{i+1}$ satisifes 
property (5), we have $$r \forces \emptyset 
\neq (\dot{Z}_i \intersection B) \setminus
i = (\dot{Z}_n \intersection B) \setminus i \subseteq 
(\dot{Z}_n \intersection B) \setminus \ell.$$
As $\ell$ was arbitrary this shows $q$ forces 
$\dot{Z}_n \intersection B$ is infinite, and therefore $q$ is an
$(M, \mathbb{K}_\alpha, \calA_1, B)$-generic condition.
\end{proof}

Let $H_\alpha$ be $\mathbb{K}_\alpha$-generic over 
$V[G_\alpha]$, and set
$Y_k^\alpha = \bigcup_{p \in H_\alpha} y_k$, 
$C_k^\alpha = \bigcup_{p \in H_\alpha} c_k$,
$a_\alpha = \bigcup_{p \in H_\alpha} s$, 
$\calA^2_{\alpha+1} = \calA_2 \union \set{a_\alpha}$. 
The following lemma gives consequences of forcing with 
$\mathbb{K}_\alpha$.

\begin{lemma}[{\cite[Claim 11]{Friedmanzdomskyy}}] \label{lem7} 
The following hold. 
\begin{enumerate}[(1)] 
    \item $a_\alpha \in [\omega]^\omega$ is almost disjoint from 
    all elements of $\calA_\alpha^2$;
    \item For all $i \in \omega$, $a_\alpha \intersection 
    x_i $ is infinite;
    \item For all $m \in \Delta(a_\alpha)$,
    $C_m^\alpha$ is a club in $\omega_1$ such that 
    $C_m^\alpha \intersection S_{\alpha+m} = \emptyset$, and for all
    $\xi \in \omega_1$,
    $\xi \in C_m^\alpha$ if and only if $a_\alpha \intersection 
    R_{\langle m, \xi \rangle}$ is finite;

    \item For all $m \in \Delta(a_\alpha)$,
    $Y_m^\alpha \from \omega_1 \to 2$ is a total function,
    and for all $\xi \in \omega_1$,
    $Y_m^\alpha(\xi) = 1$ if and only if 
    $a_\alpha \intersection R_{\langle \omega + m , \xi \rangle}$
    is finite;

   \item For all $n \in \omega$, $n \in Z_\alpha$ if and only if 
    there exists $m \in \omega$ such that 
    $(a_\alpha \intersection R_{\langle 0,0 \rangle })(2n) = 
    R_{\langle 0,0 \rangle}(2m)$; 

 \item For every $r \in \mathcal{R}$ and finite $A' \subseteq 
    \calA_{\alpha+1}^2$, $|E(r) \setminus 
    \bigcup A'| = | O(r) \setminus \bigcup A'| 
    = \omega$. 
    
\end{enumerate}

\end{lemma}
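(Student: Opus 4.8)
The plan is to prove all six items along the lines of \cite[Claim 11]{Friedmanzdomskyy}, by a sequence of density arguments in $\mathbb{K}_\alpha$: for each item I would exhibit a family of dense subsets of $\mathbb{K}_\alpha$, definable over $V[G_\alpha]$, so that, since $H_\alpha$ meets all of them, the asserted property follows. The tools in constant use are Lemma \ref{extend1} (every condition has a pure extension whose infinite part has each $d_k$ and $z_k$ reaching past a prescribed countable ordinal), Lemma \ref{preprocessed} (a pure extension into a preprocessed form for any fixed open dense set), and Proposition \ref{main}; in particular $\mathbb{K}_\alpha$ is proper, so $\omega_1$ is preserved and the ``club in $\omega_1$'' assertions below are meaningful.

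First I would handle the items about the finite part, namely (1), (2) and (6). For (1), for each $b\in\calA_\alpha^2$ the set of conditions with $b$ in the finite part is dense by clause (3) of Definition \ref{def fz}; once some $p\in H_\alpha$ has $b$ there, the extension relation forbids any later element of the generic $a_\alpha=\bigcup_{p\in H_\alpha}s$ from meeting $b$, so $a_\alpha\cap b$ is finite. For the infinitude of $a_\alpha$ and for (2) one uses the observation that in any condition $\langle\langle t,t^*\rangle,\langle c_k,y_k:k\in\omega\rangle\rangle$ the set $t^*$ consists of finitely many members of $\calR$ — all with indices $\langle\eta,\xi\rangle$ having $\xi\ge\eta_\alpha$, since $c_m\subseteq\omega_1\setminus\eta_\alpha$ and $y_m\restriction\eta_\alpha=0$ — together with finitely many members of $\calA_\alpha^2$; by the defining property of $\eta_\alpha$ no such finite family covers any $x_i$, and hence in particular not $\omega$, so, using also that each member of $\calR$ is co-infinite, one can always adjoin to $t$ a fresh $\ell$ above any given bound with $\ell\notin\bigcup t^*$ and $\ell\notin R_{\langle 0,0\rangle}$, the latter keeping the $Z_\alpha$-coding clause of Definition \ref{def fz}(3) untouched; enlarging $t$ only enlarges $C(t)$ within the cofinite tail on which the infinite-part coordinates are unconstrained, so the result is again a condition. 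Taking such $\ell$ inside $x_i$ gives (2). For (6), that the bookkeeping invariant $(\ast)$ persists at $\alpha+1$, I would run the same argument for each $r\in\calR$ and finite $A'\subseteq\calA_\alpha^2$: $(\ast)$ at stage $\alpha$ together with the almost disjointness of $\calR$ guarantee that at every stage infinitely much of $E(r)$, respectively $O(r)$, lies outside $\bigcup t^*\cup\bigcup A'$, and one commits the generic to omitting cofinally many such points by adjoining the relevant $R\in\calR$ to the finite part once it becomes eligible, or else simply by not extending $t$ into them.

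Next I would treat the coding items (3), (4), (5). For (5), the $R_{\langle 0,0\rangle}$ clause in Definition \ref{def fz}(3) is a property of $s$ that passes to the union, so it suffices that the conditions with $\card{t\cap R_{\langle 0,0\rangle}}\ge n$ are dense — proved as above, now placing the new point inside $R_{\langle 0,0\rangle}$ in the unique parity slot dictated by $Z_\alpha$. For (3) and (4), closedness of $C_m^\alpha$ is immediate and $C_m^\alpha\cap S_{\alpha+m}=\emptyset$ is forced by clause (1); unboundedness of $C_m^\alpha$ and totality of $Y_m^\alpha$ follow from Lemma \ref{extend1}, and the two ``if and only if'' clauses are the standard almost disjoint coding equivalence — $R_{\langle m,\xi\rangle}$, respectively $R_{\langle\omega+m,\xi\rangle}$, becomes eligible for the finite part precisely when $\xi\in c_m$, respectively $y_m(\xi)=1$, which by density freezes $a_\alpha\cap R_{\langle m,\xi\rangle}$ finite in that case, while for $\xi$ outside the coded set the density argument of the previous paragraph keeps that intersection infinite. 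The step I expect to be the main obstacle is verifying that all the clauses of Definition \ref{def fz} stay mutually compatible under the extensions used above — concretely, that in extending the infinite part one never creates a limit point $\gamma$ of some $c_k$ that equals $\omega_1^\calM$ for a suitable $\calM$ containing $c_k\cap\gamma$ and $y_k\restriction\gamma$ unless the nonstationarity-coding statement of Definition \ref{def fz}(4) already holds in $\calM$. This is exactly what the localization apparatus is designed to secure: the $y_k$'s carry the sufficiently absolute code $X_\alpha$ of $\alpha$, the $\Sigma_1$-formula being shot is the nonstationarity-coding statement, and $c_k\cap\gamma$ is a club in $\gamma$ disjoint from $S_{\alpha+k}$, so the reflection into $\calM$ holds by the coherence of $\vec{S}$ from Proposition \ref{statcostat sequence}. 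Granting this, Lemma \ref{extend1} and Lemma \ref{preprocessed} suffice to run all six density arguments, and an appeal to the genericity of $H_\alpha$ completes the proof.
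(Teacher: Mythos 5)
The paper does not reprove this lemma---it is imported verbatim from \cite[Claim 11]{Friedmanzdomskyy}---so your proposal is being measured against the cited argument, and it does follow the same route: genericity of $H_\alpha$ over a battery of dense sets, with Lemma \ref{extend1} supplying the pure extensions of the infinite part and the choice of $\eta_\alpha$ together with the inductive hypothesis $(\ast)$ supplying the freedom to grow the finite part. Items (1), (3), (4), (6) and the compatibility worry you isolate at the end (maintaining clause (4) of Definition \ref{def fz} when lengthening $c_k,y_k$) are all handled correctly, and deferring the latter to Lemma \ref{extend1} is exactly what the paper does.

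Two points need tightening. First, in the density argument for (2) you propose to adjoin a fresh $\ell\in x_i$ with $\ell\notin R_{\langle 0,0\rangle}$ so as to leave the $Z_\alpha$-clause of Definition \ref{def fz}(3) untouched; but nothing in the hypotheses rules out $x_i\subseteq^* R_{\langle 0,0\rangle}$ (the defining property of $\eta_\alpha$ only concerns $R_{\langle\eta,\xi\rangle}$ with $\xi\geq\eta_\alpha$, and $R_{\langle 0,0\rangle}$ has index $\xi=0$), so this choice is not always available. The standard repair: if $|s\cap R_{\langle 0,0\rangle}|$ is even, first insert a filler point of $R_{\langle 0,0\rangle}$ of the parity dictated by $Z_\alpha$ (possible since $E(R_{\langle 0,0\rangle})\setminus\bigcup A'$ and $O(R_{\langle 0,0\rangle})\setminus\bigcup A'$ are infinite by $(\ast)$ and the almost disjointness of $\calR$), after which the point of $x_i$ lands in an unconstrained odd slot whether or not it lies in $R_{\langle 0,0\rangle}$. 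Second, your parenthetical that enlarging $t$ ``only enlarges $C(t)$ within the cofinite tail'' has the inclusion backwards: since $C(s)$ already contains the tail $\omega\setminus\max(\Delta(s))$, one checks $C(t)\subseteq C(s)$ for $t$ end-extending $s$, which is what actually guarantees that no new instances of clause (4) appear (and is also the mechanism keeping $S_{\alpha+m}$ stationary for $m\notin\Delta(a_\alpha)$). Neither issue affects the architecture of the proof.
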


\begin{lemma}[{\cite[Corollary 12]{Friedmanzdomskyy}}] \label{cor8} 
    $\mathbb{K}_\alpha$ is $S_{-1}$-proper.
    Moreover, for every $ p = 
    \langle \langle s, s^* \rangle : \langle c_k,y_k : k \in \omega \rangle
     \rangle  \in 
    \mathbb{K}_\alpha$,
    the subposet
    $\mathbb{K}_\alpha \restriction p = 
    \set{r \in \mathbb{K}_\alpha}[ r \leq p]$ 
    is $(\omega_1 \setminus \bigcup_{n \in C(s)} S_{\alpha+n})$-
    proper.
\end{lemma}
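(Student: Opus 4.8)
The plan is to read off both statements from Proposition~\ref{main}. That proposition produces, for a condition $p$ with $\mathrm{Fin}(p) = \langle s, s^*\rangle$ and a countable $M \prec H_\theta$ containing $\mathbb{K}_\alpha, p, \calA_1$, an $(M, \mathbb{K}_\alpha, \calA_1, B)$-generic $q \leq p$ with $\mathrm{Fin}(q) = \mathrm{Fin}(p)$, provided two things: that some $B \in \mathcal{I}(\calA_1)$ meets every member of $\mathcal{I}(\calA_1)^+ \cap M$ infinitely, and that $M \cap \omega_1 \notin \bigcup_{k \in C(s)} S_{\alpha+k}$. The first proviso is always met: $\mathcal{I}(\calA_1)^+ \cap M$ is a countable subfamily of $\mathcal{I}(\calA_1)^+$ (as $M$ is countable), so by the tightness of $\calA_1$ there is $a \in \calA_1 \subseteq \mathcal{I}(\calA_1)$ with $a \cap Y$ infinite for all $Y \in \mathcal{I}(\calA_1)^+ \cap M$, and we set $B = a$. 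Being $(M, \mathbb{K}_\alpha, \calA_1, B)$-generic entails being $(M, \mathbb{K}_\alpha)$-generic (Definition~\ref{preserve def}), so it suffices to arrange the second proviso in each case.

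For $S_{-1}$-properness, let $M \prec H_\theta$ be countable elementary with $\mathbb{K}_\alpha, \calA_1 \in M$ and $M \cap \omega_1 \in S_{-1}$, and let $p = \langle \langle s, s^*\rangle, \langle c_k, y_k : k \in \omega\rangle\rangle \in \mathbb{K}_\alpha \cap M$. By Proposition~\ref{statcostat sequence}, $S_{-1} \cap S_\beta = \emptyset$ for every $\beta < \omega_2$, so $M \cap \omega_1 \notin S_{\alpha+k}$ for all $k$, hence $M \cap \omega_1 \notin \bigcup_{k \in C(s)} S_{\alpha+k}$. Proposition~\ref{main} now gives $q \leq p$ that is $(M, \mathbb{K}_\alpha, \calA_1, B)$-generic, in particular $(M, \mathbb{K}_\alpha)$-generic; as $p$ was arbitrary, $\mathbb{K}_\alpha$ is $S_{-1}$-proper.

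For the "moreover" clause, fix $p = \langle \langle s, s^*\rangle, \langle c_k, y_k : k \in \omega\rangle\rangle \in \mathbb{K}_\alpha$ and let $M \prec H_\theta$ be countable elementary with $\mathbb{K}_\alpha, \calA_1, p \in M$ (so $\mathbb{K}_\alpha \restriction p \in M$) and $M \cap \omega_1 \notin \bigcup_{n \in C(s)} S_{\alpha+n}$. Given $r \in (\mathbb{K}_\alpha \restriction p) \cap M$ with $\mathrm{Fin}(r) = \langle t, t^*\rangle$, I first observe that $C(t) \subseteq C(s)$: since $t$ end-extends $s$ one has $\Delta(s) \subseteq \Delta(t)$, and every element of $\Delta(t) \setminus \Delta(s)$ arises from an index $> \max(s)$ and hence exceeds $\max \Delta(s)$, so $\Delta(t) \subseteq \Delta(s) \cup (\max \Delta(s), \omega)$ and $\max \Delta(t) \geq \max \Delta(s)$, whence $C(t) = \Delta(t) \cup (\omega \setminus \max \Delta(t)) \subseteq C(s)$. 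Consequently $M \cap \omega_1 \notin \bigcup_{k \in C(t)} S_{\alpha+k}$, so Proposition~\ref{main} applied to $r$ (with the same $B$) yields $q \leq r$, with $\mathrm{Fin}(q) = \mathrm{Fin}(r)$, that is $(M, \mathbb{K}_\alpha)$-generic. Since $q \leq r \leq p$ we have $q \in \mathbb{K}_\alpha \restriction p$, and $(M, \mathbb{K}_\alpha)$-genericity transfers to $(M, \mathbb{K}_\alpha \restriction p)$-genericity by the usual argument: a set $D \in M$ that is dense in $\mathbb{K}_\alpha \restriction p$ extends to the $M$-definable $\mathbb{K}_\alpha$-dense set $D \cup \{r' \in \mathbb{K}_\alpha : r' \perp p\}$, which is predense below $q$, so $D \cap M$ is predense below $q$ in $\mathbb{K}_\alpha \restriction p$. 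Hence $\mathbb{K}_\alpha \restriction p$ is $(\omega_1 \setminus \bigcup_{n \in C(s)} S_{\alpha+n})$-proper.

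The only point that is not pure bookkeeping is the containment $C(\mathrm{Fin}(r)) \subseteq C(\mathrm{Fin}(p))$ for $r \leq p$, i.e.\ that passing to an extension only shrinks the coding area relevant to the properness requirement; this is what pins the failure of properness of $\mathbb{K}_\alpha \restriction p$ to exactly the "finitely-many-plus-a-tail" block $C(s)$ of indices. I expect this small combinatorial check (and keeping straight the modified definition of $\Delta$ used in this section) to be the main, though minor, obstacle; everything else is a direct appeal to Propositions~\ref{main} and~\ref{statcostat sequence}.
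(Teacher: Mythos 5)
Your proposal is correct, and it follows the intended route: the paper itself only cites \cite[Corollary 12]{Friedmanzdomskyy} without proof, and the natural derivation is exactly yours --- feed Proposition \ref{main} a set $B$ obtained from the tightness of $\calA_1$ applied to the countable family $\calI(\calA_1)^+\cap M$, use the disjointness of $S_{-1}$ from every $S_\beta$ (Proposition \ref{statcostat sequence}) for the first clause, and the monotonicity $C(t)\subseteq C(s)$ for end-extensions $t\supseteq s$ for the second. Your verification of that containment under the modified $\Delta$ of Section \ref{FZ section}, and the standard transfer of $(M,\mathbb{K}_\alpha)$-genericity to the restricted poset $\mathbb{K}_\alpha\restriction p$, are both correct.
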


This completes the definition 
of $\langle \mathbb{P}_\alpha, \dot{\mathbb{Q}}_\beta : 
\alpha \leq \omega_2, \beta < \omega_2 \rangle$. 
 
\begin{corollary} \label{full iteration of FZ is proper and preserves}
$\mathbb{P}_{\omega_2}$ is $S_{-1}$-proper and strongly preserves
the tightness of $\calA_1$. Moreover
for all $m \in \omega \setminus \Delta(a_\alpha)$, 
$S_{\alpha + m}$ remains stationary in $L[G]$. 
\end{corollary}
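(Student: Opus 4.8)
The first two assertions follow mechanically from the per-coordinate lemmas. At every $\alpha<\omega_2$ the iterand $\dot{\mathbb{Q}}_\alpha$ is forced to be an $S_{-1}$-proper poset of cardinality at most $\aleph_1$ which strongly preserves the tightness of $\calA_1$: for a successor $\alpha$ this is the defining requirement on $\dot{\mathbb{Q}}_\alpha=\dot{\mathbb{Q}}_\alpha^0$; for a limit $\alpha$ at which $\dot{\mathbb{Q}}_\alpha$ is trivial it is immediate; and for $\dot{\mathbb{Q}}_\alpha=\mathbb{K}_\alpha$ the $S_{-1}$-properness is Lemma \ref{cor8} (any countable $M\prec H_\theta$ with $M\cap\omega_1\in S_{-1}$ satisfies $M\cap\omega_1\notin\bigcup_{k\in\omega}S_{\alpha+k}$, since $S_{-1}\cap S_{\alpha+k}=\emptyset$, so Proposition \ref{main} applies), strong preservation of tightness of $\calA_1$ is exactly Proposition \ref{main}, and $\card{\mathbb{K}_\alpha}=\aleph_1$ follows by the routine count of its conditions using \CH\ in the relevant intermediate extension. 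So I would invoke Lemma \ref{preservation of S proper omega bounding} together with Lemma \ref{csi iterations strongly preserve tightness} to conclude that $\mathbb{P}_{\omega_2}$ is $S_{-1}$-proper, strongly preserves the tightness of $\calA_1$, and has the $\aleph_2$-cc.

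For the stationarity claim, fix $\alpha$ with $\dot{\mathbb{Q}}_\alpha=\mathbb{K}_\alpha$ (otherwise $a_\alpha$ is undefined and there is nothing to prove) and work in $L[G]$ with $m\in\omega\setminus\Delta(a_\alpha)$. The first step is to observe that some condition in $G$ has, at coordinate $\alpha$, a finite part $\langle s_1,s_1^*\rangle$ with $m\notin C(s_1)$. Writing $m=2n_0+1$ or $m=2n_0+2$ (the case $m=0$ being trivial, every nonempty finite part working), the hypothesis $m\notin\Delta(a_\alpha)$ forces $n_0\notin a_\alpha$ in the odd case and $n_0\in a_\alpha$ in the even case, so every condition in $G$ has $n_0$ outside (respectively inside) its coordinate-$\alpha$ finite part; since $a_\alpha$ is infinite (Lemma \ref{lem7}), $G$ also meets the dense set of conditions with $\max\Delta(s)>m$ at coordinate $\alpha$, and any sufficiently large such $q_1\in G$ then has $m\notin\Delta(s_1)$ and $m<\max\Delta(s_1)$, i.e.\ $m\notin C(s_1)$.

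Now suppose toward a contradiction that $S_{\alpha+m}$ is nonstationary in $L[G]$, witnessed by a club $C$ with $\mathbb{P}_{\omega_2}$-name $\dot C$, and choose $q_1\in G$ which forces ``$\dot C$ is a club and $\dot C\cap S_{\alpha+m}=\emptyset$'' and which, by the previous step, has $m\notin C(s_1)$ at coordinate $\alpha$. Working in $L$, I would pick a countable $N\prec H_\theta$ with $q_1,\dot C,\mathbb{P}_{\omega_2},\vec S,\alpha\in N$ such that $\delta:=N\cap\omega_1$ lies in $S_{\alpha+m}$ but in none of $S_{-1}$, $S_{\alpha+k}$ for $k\neq m$, or $S_{\beta+k}$ for $\beta\in N\setminus\{\alpha\}$ and $k\in\omega$; since $N$ is countable and $\vec S$ is pairwise almost disjoint (Proposition \ref{statcostat sequence}), these requirements exclude only a countable subset of $S_{\alpha+m}$, so such $N$ exist and in fact form a stationary family in $[H_\theta]^\omega$. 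Then, by the usual fusion for countable support iterations, I would build an $(N,\mathbb{P}_{\omega_2})$-generic condition $q^*\leq q_1$, extending the current condition at each coordinate $\beta\in N$ to one generic over $N[\dot G_\beta]$: this is legitimate because $N[\dot G_\beta]\cap\omega_1=\delta$ and either $\dot{\mathbb{Q}}_\beta$ is proper or trivial (nothing to check), or $\beta\neq\alpha$ and $\dot{\mathbb{Q}}_\beta=\mathbb{K}_\beta$, where $\delta\notin\bigcup_{k\in\omega}S_{\beta+k}$ so the stage-$\beta$ instance of Proposition \ref{main} yields an $(N[\dot G_\beta],\mathbb{K}_\beta)$-generic pure extension, or $\beta=\alpha$, where the current condition is a pure extension of $q_1(\alpha)$ and so still has finite part $s_1$, whence every $k\in C(s_1)$ is $\neq m$, $\delta\notin\bigcup_{k\in C(s_1)}S_{\alpha+k}$, and Proposition \ref{main} applies once more. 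As $q^*$ is $(N,\mathbb{P}_{\omega_2})$-generic and $\dot C\in N$ is forced to be club, $q^*\Vdash\delta=\sup(\dot C\cap\delta)\in\dot C$; but $q^*\leq q_1\Vdash\dot C\cap S_{\alpha+m}=\emptyset$ and $\delta\in S_{\alpha+m}$, a contradiction. Hence $S_{\alpha+m}$ remains stationary in $L[G]$.

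The hard part is the coordination in the fusion: the single ordinal $\delta=N\cap\omega_1$ has to simultaneously witness $\delta\in S_{\alpha+m}$ and avoid every stationary set relevant to the coordinates $\beta\in N$, which is precisely why one excises the countable collision set when choosing $N$ and why the preliminary condition $q_1$ with $m\notin C(s_1)$ is indispensable, as it makes coordinate $\alpha$ behave, relative to $\delta$, like an $(\omega_1\setminus\bigcup_{k\neq m}S_{\alpha+k})$-proper step (Lemma \ref{cor8}). The fusion argument itself is the standard countable support preservation argument already underlying Lemma \ref{preservation of S proper omega bounding}, so no new iteration machinery is required.
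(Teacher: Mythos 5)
Your proof is correct and follows essentially the same route as the paper, whose own proof is just the citation ``By Proposition \ref{main}, Lemmas \ref{cor8} and \ref{preservation of S proper omega bounding}'': your first paragraph is exactly the per-coordinate verification feeding into those iteration lemmas, and your fusion argument for the stationarity of $S_{\alpha+m}$ is the standard unpacking of Lemma \ref{cor8} (that $\mathbb{K}_\alpha\restriction p$ is $(\omega_1\setminus\bigcup_{k\in C(s)}S_{\alpha+k})$-proper) together with the preservation of $S$-properness under countable support iteration. The only cosmetic point is the choice of $N$: rather than ``excising a countable collision set'' (which depends on $N$), it is cleaner to note that for $\gamma\in N$ with $\gamma\neq\alpha+m$ the bound $\sup(S_{\alpha+m}\cap S_\gamma)$ lies in $N$ and hence below $\delta$, so $\delta\notin S_\gamma$ automatically.
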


\begin{proof}
By Proposition \ref{main}, Lemmas \ref{cor8} and \ref{preservation of S proper omega bounding}.
\end{proof}

\begin{lemma}[{\cite[Lemma 13]{Friedmanzdomskyy}}] \label{pi12
definability of a lemma}
If $G$ is $\mathbb{P}$-generic over $L$, then 
 in $L[G]$, $\calA_2$ is definable by the 
following $\Pi_2^1$ formula: 
\begin{align*}
a \in \calA_2 \Leftrightarrow &    \ \forall \calM [(\calM \text{ is 
a countable suitable model, } a \in \calM) \\
& \exists \overline{\alpha}< \omega_2^\calM 
( \calM \models \forall m \in \Delta(a) (S_{\overline{\alpha} + m} 
\text{ is nonstationary})]
\end{align*}
\end{lemma}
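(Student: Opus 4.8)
The plan is to establish the biconditional in both directions, following the template of Lemma~\ref{definability of wellorder} but with the quantifier pattern arranged to be only $\forall\exists$; the localization clause~(4) of Definition~\ref{def fz} --- ``David's trick'' --- is what makes the generic coding locally verifiable in \emph{arbitrary} suitable models. Recall $a\in\calA_2$ means $a=a_\lambda$ for some limit stage $\lambda$ at which $\dot{\mathbb{Q}}_\lambda=\mathbb{K}_\lambda$ (a ``$\mathbb{K}$-stage''), and that by Lemma~\ref{lem7}(3)--(5) such $a_\lambda$ arithmetically codes $Z_\lambda$ (hence $\eta_\lambda$) together with, for each $m\in\Delta(a_\lambda)$, the club $C_m^\lambda$ (disjoint from $S_{\lambda+m}$) and the function $Y_m^\lambda$, whose even part carries the sufficiently absolute code $X_\lambda$ for $\lambda$ (Fact~\ref{sufficiently absolute codes exist}).

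For the forward direction, suppose $a=a_\lambda$ and let $\calM$ be a countable suitable model with $a\in\calM$. Since $a\in\calM$ and $\calR\cap\calM=\{R_{\langle\eta,\xi\rangle}:\xi<\omega_1^\calM\}$, the model computes $\Delta(a)$, computes $\eta_\lambda$ (so $\eta_\lambda<\omega_1^\calM$, as $Z_\lambda$ codes a surjection $\omega\to\eta_\lambda$), and has $C_m^\lambda\cap\omega_1^\calM$ and $Y_m^\lambda\restriction\omega_1^\calM$ as elements for each $m\in\Delta(a)$; from the latter it reads off $X_\lambda\cap\omega_1^\calM$ and, by the uniqueness in Definition~\ref{sufficiently absolute code}, a single $\overline\alpha\in\omega_2^\calM$ not depending on $m$. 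If $\omega_1^\calM=\omega_1^L$ then $\overline\alpha=\lambda$ and $\vec{S}^\calM=\vec{S}\restriction\omega_2^\calM$ by Proposition~\ref{statcostat sequence}, so $C_m^\lambda\in\calM$ directly witnesses $\calM\models S_{\overline\alpha+m}\text{ nonstationary}$. Otherwise, writing $C_m^\lambda=\bigcup_{p\in H_\lambda}c_m^p$ and $Y_m^\lambda=\bigcup_{p\in H_\lambda}y_m^p$, one applies clause~(4) of Definition~\ref{def fz} at $\gamma=\omega_1^\calM$ --- which is a limit point of $C_m^\lambda$, see below --- to a witnessing $p\in H_\lambda$, concluding that $\calM$ verifies ``the ordinal coded by $\mathrm{Even}(y_m^p)$, shifted by $m$, is nonstationary'', which by code-uniqueness is again $\overline\alpha$. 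Hence $\calM\models\forall m\in\Delta(a)\,(S_{\overline\alpha+m}\text{ is nonstationary})$.

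The converse goes by contraposition, and this is where David's trick does the real work: if $a\notin\calA_2$, one produces a suitable $\calM\ni a$ admitting no valid $\overline\alpha$. Since the right-hand side quantifies over \emph{all} suitable $\calM$, it must hold in a minimal suitable model $\calM$ over $a$, whose sets are all constructible from $a$; for such an $\calM$ to see $S_{\overline\alpha+m}$ nonstationary it would need clubs constructed from $a$ that are disjoint from the corresponding $S_{\overline\alpha+m}^\calM$, and by the shape of the almost disjoint coding the only reals yielding such clubs for a single consistent $\overline\alpha$ are the $a_\beta$ --- so $a\notin\calA_2$ forces failure at $\calM$. Complementarily, if the right-hand side does hold, picking $\calM$ suitable with $\omega_1^\calM=\omega_1^L$ and $a\in\calM$ yields $\overline\alpha$ with $S_{\overline\alpha+m}$ genuinely nonstationary in $L[G]$ for all $m\in\Delta(a)$ (Proposition~\ref{statcostat sequence} plus upward absoluteness of clubs); then Corollary~\ref{full iteration of FZ is proper and preserves} together with the $S_{-1}$-properness of the iteration --- whereby the only $S_\beta$ losing stationarity over $L$ are those shot at a $\mathbb{K}$-stage --- pins $\overline\alpha$ down to the stage $\lambda$ with $a=a_\lambda$, modulo a short combinatorial check on $\Delta$ using that its values determine their argument.

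The main obstacle will be the parenthetical claim in the forward direction that $\omega_1^\calM$ is a limit point of each $C_m^\lambda$ for \emph{every} suitable $\calM\ni a_\lambda$ --- equivalently that $C_m^\lambda\cap\omega_1^\calM$ is unbounded in $\omega_1^\calM$ --- so that clause~(4) is genuinely triggered; without it a small $\calM$ might fail to verify the pattern even for $a_\lambda$. Establishing it requires exploiting how $\mathbb{K}_\lambda$ entangles club shooting with localization: a density argument in the style of Lemma~\ref{generic for localization} and the proof of Proposition~\ref{main}, showing that at cofinally many safe levels $\gamma$ one can simultaneously extend a condition's $c_m$-part to make $\gamma$ a limit point and its $y_m$-part to certify clause~(4) there, the certification being legitimate precisely because the closed bounded set $c_m\cap\gamma$ avoids $S_{\lambda+m}$, which by the coherence of $\vec{S}$ (Proposition~\ref{statcostat sequence}) and the defining property of sufficiently absolute codes is exactly what a suitable $\calN$ with $\omega_1^\calN=\gamma$ needs in order to see $S_{\overline\alpha+m}^\calN$ nonstationary.
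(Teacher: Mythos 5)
The paper does not actually prove this lemma; it is imported verbatim as \cite[Lemma 13]{Friedmanzdomskyy}, so there is no in-paper argument to match your proposal against. Your skeleton is the right one (decode $C_m^\lambda\cap\omega_1^\calM$ and $Y_m^\lambda\restriction\omega_1^\calM$ from $a_\lambda$ inside $\calM$ via the almost disjoint coding and the absoluteness of $\calR$; invoke clause (4) of Definition~\ref{def fz} for the forward direction; use stationarity preservation plus the injectivity of $\Delta$ for the converse), but two steps do not go through as written.

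First, in the forward direction you correctly isolate the crux --- that $\omega_1^\calM$ must be a limit point of $C_m^\lambda$ for clause (4) to fire --- but your proposed repair, a density argument producing \emph{cofinally many} ``safe'' levels, does not close the gap: cofinally many good levels is compatible with some suitable $\calM\ni a_\lambda$ having $\omega_1^\calM$ at a bad level, and for such $\calM$ you have no club witness at all. What is actually needed is the full force of David's trick as in the proof of Lemma~\ref{localization dense set}: when a condition's $y_k$-part is extended past a limit ordinal $\delta$ that is \emph{not} being arranged as a limit point of $c_k$, its odd part is made to code a witness that $\delta$ is countable, so that \emph{no} suitable $\calM$ with $\omega_1^\calM=\delta$ can contain $Y_m^\lambda\restriction\delta$ (hence none can contain $a_\lambda$). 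This converts ``cofinally many good levels'' into ``every level realizable as $\omega_1^\calM$ is good,'' and you never deploy it. Second, in the converse you ``pick $\calM$ suitable with $\omega_1^\calM=\omega_1^L$'': the formula quantifies only over \emph{countable} suitable models, which necessarily have $\omega_1^\calM<\omega_1^L$, so this instance is not available; one must instead apply the hypothesis to the transitive collapse of a countable $N\prec H_{\omega_2}^{L[G]}$ containing $a$ and $\vec S$, pull the resulting $\overline\alpha$ back through the collapse by elementarity (not by ``upward absoluteness of clubs,'' which would only give a club in a countable ordinal), and then run the argument via Corollary~\ref{full iteration of FZ is proper and preserves} and the rigidity of $\Delta$. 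Your first, contrapositive paragraph (``minimal suitable model over $a$'') is not an argument and should be discarded in favour of that route.
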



With this we are ready to prove the main theorem of this section:

\medskip
\noindent
\emph{Proof of Theorem \ref{Theorem FZ section 2 sizes}}
Let $\mathbb{P}$ be the countable support iteration defined above,
let $G$ be $\mathbb{P}$-generic over $L$,
and let $\calA_2 = \set{a_\alpha }[\alpha < \omega_2]$,
where $a_\alpha = \dot{a}_\alpha^G$ where $\dot{a}_\alpha$ 
is the generic real added by $\mathbb{Q}_\alpha$. 
By Lemma \ref{lem7}, item (1), 
$\mathcal{A}_2$ is an almost disjoint family
of infinite subsets of $\omega$. To see 
it is tight, suppose 
that there is $\set{ x_i}[ i \in \omega ] \in 
L[G]$ such that $x_i \in \calI(\calA_2)^+$ for every 
$i \in \omega$. Then there is $\alpha < \omega_2$ such 
that $\langle x_i : i \in \omega \rangle \in 
L[G_\alpha]$, where $G_\alpha = G \intersection \mathbb{P}_\alpha$, so there is a sequence of 
$\mathbb{P}_\alpha$-names 
$\langle \dot{x}_i : i \in \omega \rangle \in L_{\omega_2}$
such that $x_i = \dot{x}_i^{G_\alpha}$. 
Since $F^{-1}(\langle \dot{x}_i : i \in \omega \rangle)$
is unbounded in $\omega_2$, 
there exists $\beta \geq \alpha$ such that 
$F(\beta) = \langle \dot{x}_i : i \in \omega \rangle$. 
By definition of $\dot{\mathbb{Q}}_\beta$, and 
Lemma \ref{lem7} item (2),
$a_\beta \intersection x_i$ is infinite in 
$L[G_\beta]$ for all $i \in \omega$, where 
$a_\beta$ is the $\mathbb{Q}_\beta$-generic real. As $a_\beta \in 
\calA_2$, we have $(\calA_2 $ is tight$)^{L[G]}$. 
That $\calA_2$ is $\Pi_2^1$-definable in 
$L[G]$ is by Lemma \ref{pi12
definability of a lemma}.

As $\calA_1 \in L$ is $\Pi_1^1$-definable 
in $L$, by Shoenfield absoluteness $\calA_1$ remains 
$\Pi_1^1$-definable in $L[G]$. 
By Proposition \ref{main}, for every $\alpha < \omega_2$, 
$\dot{\mathbb{Q}}_\alpha$ is a $\mathbb{P}_\alpha$-name
for a proper forcing strongly preserving the tightness 
of $\calA_1$. Thus 
$(\mathfrak{a} = \card{\calA_1} = \aleph_1 < \mathfrak{c} = \aleph_2 = 
\card{\calA_2})^{L[G]}$.
\qed

\section{Constellations and Definability} \label{section putting it all together}

 
With the above results, we can now prove
our main theorem. 
\begin{theorem}\label{themaintheorem|}

It is consistent that 
$\aleph_1 = \mathfrak{a} < \mathfrak{s} = \aleph_2$, 
there exists a $\Delta_3^1$ wellorder of the reals, 
as well as tight mad families of cardinality $\aleph_1$ and 
$\aleph_2$, which are respectively 
$\Pi_1^1$ and $\Pi_2^1$ definable.
\end{theorem}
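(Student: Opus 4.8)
The plan is to interleave the countable support iterations of Sections 3 and 4 into a single $\omega_2$-iteration over $L$, filling the slots ``reserved for future applications'' of Section 4 with the creature forcing $\mathbb{Q}$ of Definition \ref{def creature}. Work in $L$ and fix: a $\Pi_1^1$ tight mad family $\calA_1 \subseteq L$ (Lemma \ref{Lemma exists coanalytic tight}); the $\Sigma_1$-definable objects $\vec{S} = \langle S_\alpha : \alpha < \omega_2\rangle$ and $S_{-1}$ (Proposition \ref{statcostat sequence}); a $\Sigma_1$-definable surjection $F$ with unbounded fibres; the $\Sigma_1$-definable almost disjoint family $\calR$; and the sufficiently absolute codes $X_\alpha$ (Fact \ref{sufficiently absolute codes exist}). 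Fix in addition a partition of $\omega_2$ into $\omega_2$ pairwise disjoint $\omega$-blocks $\langle B_\xi : \xi < \omega_2\rangle$, and a bookkeeping function assigning to each stage $\alpha$ at most one \emph{task}---either a \emph{wellorder task}, a pair $(x,y)$ of reals of $L[G_\alpha]$ with $x <_\alpha y$, or an \emph{$\calA_2$-task}, a sequence $\langle \dot x_i : i \in \omega\rangle$ of $\mathbb{P}_\alpha$-names for elements of $\calI(\calA_\alpha^2)^+$---together with a fresh block $B_\xi$ on which the coding for that task is to be carried out, arranged so that both kinds of task are handled cofinally often and no block is ever reused.

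By recursion on $\alpha < \omega_2$ define a countable support iteration $\langle \mathbb{P}_\alpha, \dot{\mathbb{Q}}_\beta : \alpha \leq \omega_2, \beta < \omega_2\rangle$ with $\dot{\mathbb{Q}}_\alpha = \dot{\mathbb{Q}}_\alpha^0 \ast \dot{\mathbb{Q}}_\alpha^{\mathrm{extra}}$, where $\dot{\mathbb{Q}}_\alpha^0$ is a $\mathbb{P}_\alpha$-name for $\mathbb{Q}$ and $\dot{\mathbb{Q}}_\alpha^{\mathrm{extra}}$ is: the three-step apparatus $\dot{\mathbb{K}}_\alpha^0 \ast \dot{\mathbb{K}}_\alpha^1 \ast \dot{\mathbb{K}}_\alpha^2$ of Section 3 (club shooting through $\langle S_{B_\xi(m)} : m \in \Delta(x \ast y)\rangle$, then localization of the pertinent $\Sigma_1$-sentence, then Sacks coding $C(Y_\alpha)$), with the block there replaced by $B_\xi$, when stage $\alpha$ carries a wellorder task $(x,y)$; the Friedman--Zdomskyy poset $\mathbb{K}_\alpha$ of Definition \ref{def fz}, with $\langle S_{\alpha+m}\rangle$ replaced by $\langle S_{B_\xi(m)}\rangle$ and now read as a $\mathbb{P}_\alpha \ast \dot{\mathbb{Q}}_\alpha^0$-name (legitimate, since $\calA_\alpha^2 \in L[G_\alpha]$ and the assertion $x_i \in \calI(\calA_\alpha^2)^+$ are untouched by the $\mathbb{Q}$-step), when stage $\alpha$ carries an $\calA_2$-task; and the trivial forcing otherwise. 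As the $B_\xi$ are pairwise disjoint and each is used by at most one task, the $\vec{S}$-coding of the wellorder and that of $\calA_2$ never interfere. Every iterand is a composition of: $\mathbb{Q}$ (proper, hence $S_{-1}$-proper, of size $\aleph_1$ under the inductively preserved \textsf{CH}, strongly preserving the tightness of $\calA_1$ by Theorem \ref{Prop Q strongly preserves tightness}); a club-shooting iteration ($S_{-1}$-proper, adding no reals, as $S_{-1}$ is disjoint from every $S_\beta$); localization (proper, adding no reals, Lemma \ref{Lemma localization is proper no new reals}); Sacks coding (proper, $\omega^\omega$-bounding, strongly preserving the tightness of $\calA_1$ by Fact \ref{properties of C(Y)}(5)); and $\mathbb{K}_\alpha$ ($S_{-1}$-proper by Lemma \ref{cor8}, strongly preserving the tightness of $\calA_1$ by Proposition \ref{main})---each of size $\leq \aleph_1$. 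Hence, by Lemma \ref{preservation of S proper omega bounding} and Lemma \ref{csi iterations strongly preserve tightness}, $\mathbb{P} := \mathbb{P}_{\omega_2}$ is $S_{-1}$-proper, has the $\aleph_2$-cc, and strongly preserves the tightness of $\calA_1$; so all cardinals are preserved, \textsf{CH} holds in each $L[G_\alpha]$ with $\alpha < \omega_2$, and $V = L[G]$ is a cardinal-preserving extension of $L$, so the localization/suitable-model machinery of Sections 3 and 4 applies.

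Now let $G$ be $\mathbb{P}$-generic over $L$. Since $\mathbb{Q}$ is forced at every stage and $\mathbb{P}$ is $\aleph_2$-cc, the argument of Theorem \ref{Theorem a < s} (via Lemma \ref{Q adds unsplit} and the absoluteness of ``$b$ does not split $a$'') yields $(\mathfrak{s} = \aleph_2)^{L[G]}$, and, reals being added cofinally over a model of \textsf{CH}, $\mathfrak{c} = \aleph_2$. The family $\calA_1$ is a tight, hence maximal, mad family in $L[G]$ by the preservation just recorded, so $(\mathfrak{a} = \aleph_1)^{L[G]}$, witnessing $\aleph_1 = \mathfrak{a} < \mathfrak{s} = \aleph_2$, and $\calA_1$ remains $\Pi_1^1$ by Shoenfield absoluteness. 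Let $\calA_2 = \{a_\alpha : \alpha \text{ an } \calA_2\text{-task stage}\}$, with $a_\alpha$ the generic real added by the $\mathbb{K}_\alpha$-component; by Lemma \ref{lem7} it is an almost disjoint family, and of size $\aleph_2$ since cofinally many stages are $\calA_2$-task stages and distinct such stages give distinct members. It is tight: any $\langle x_i : i\in\omega\rangle \in L[G]$ with each $x_i \in \calI(\calA_2)^+$ lies in some $L[G_{\alpha_0}]$ by the $\aleph_2$-cc, whence $x_i \in \calI(\calA_\alpha^2)^+$ for all $\alpha \geq \alpha_0$ because $\calA_\alpha^2 \subseteq \calA_2$ and almost inclusion is absolute; so the sequence is caught at some $\calA_2$-task stage $\alpha \geq \alpha_0$, whose generic $a_\alpha$ meets every $x_i$ infinitely by Lemma \ref{lem7}(2). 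By Lemma \ref{pi12 definability of a lemma}, read with the blocks $B_\xi$ in place of the $\alpha$th $\omega$-block, $\calA_2$ is $\Pi_2^1$-definable in $L[G]$. Finally, with $<_G = \bigcup_{\alpha<\omega_2}\dot{<}_\alpha^G$, the complexity computation of Lemma \ref{definability of wellorder} shows $<_G$ is a $\Delta_3^1$ wellorder, once one checks that for the block $B_\xi$ coding a wellorder task $(x,y)$ the set $S_{B_\xi(m)}$ is nonstationary in $L[G]$ exactly for $m \in \Delta(x\ast y)$, while every other $S_\beta$ in the range of $\vec{S}$ stays stationary---which holds because every iterand other than a club-shooting or Friedman--Zdomskyy forcing targeting a block containing $\beta$ is proper (hence preserves all stationary sets), while the latter are $(\omega_1 \setminus S_{\beta'})$-proper with $\beta' \neq \beta$ and so preserve stationarity of the stationary set $S_\beta \setminus S_{\beta'}$ by the almost disjointness of $\vec{S}$.

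The main obstacle is exactly the coexistence of the two $\vec{S}$-based coding schemes: one must organize the bookkeeping so that the wellorder coding and the $\calA_2$ coding never compete for the same $\omega$-block of $\vec{S}$, and then re-verify, block by block, that no unintended $S_\beta$ loses its stationarity in the full iteration and that the definability arguments of Lemmas \ref{definability of wellorder} and \ref{pi12 definability of a lemma} transfer verbatim with the reindexed blocks. Everything else---properness and tightness-preservation of $\mathbb{Q}$, localization and Sacks coding; no new reals and $S_{-1}$-properness for club shooting; Proposition \ref{main} for $\mathbb{K}_\alpha$; the $\aleph_2$-cc and the preservation of \textsf{CH} at intermediate stages; and the absoluteness and almost disjointness of $\vec{S}$---is already isolated in the preceding sections, and the three definability conclusions and the three cardinal-characteristic equalities then follow exactly as there.
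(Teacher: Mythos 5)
Your overall strategy is the paper's: interleave the Section~3 and Section~4 iterations over $L$, prefix every stage with the creature forcing $\mathbb{Q}$ so that $\mathfrak{s}=\aleph_2$, read the Friedman--Zdomskyy and Sacks-coding apparatus as names over $\mathbb{P}_\alpha \ast \dot{\mathbb{Q}}_\alpha^0$, and quote the preservation results (Theorem \ref{Prop Q strongly preserves tightness}, Proposition \ref{main}, Fact \ref{properties of C(Y)}(5), Lemmas \ref{csi iterations strongly preserve tightness} and \ref{preservation of S proper omega bounding}) to keep $\calA_1$ tight and the iteration $\aleph_2$-cc. Where you differ is the device for keeping the two coding schemes apart: you partition a \emph{single} sequence $\vec{S}$ into pairwise disjoint $\omega$-blocks $B_\xi$ and assign fresh blocks to tasks, whereas the paper fixes three pairwise disjoint stationary sets $T_0,T_1,T_2$ and two \emph{separate} $\Sigma_1$-definable sequences $\vec{S}^0\subseteq T_0$ (for the $\calA_2$-coding) and $\vec{S}^1\subseteq T_1$ (for the wellorder coding), reserving $T_2$ for the $S$-properness of the whole iteration.

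This difference is not merely cosmetic, and it is where your argument has a gap you have only gestured at. Both definability formulas (Lemma \ref{definability of wellorder} and Lemma \ref{pi12 definability of a lemma}) begin with an \emph{existential} quantifier over the starting ordinal $\overline{\alpha}<\omega_2^{\calM}$ of the relevant $\omega$-block; they do not know which blocks your bookkeeping assigned to which task. With a single sequence $\vec{S}$, a block whose nonstationarity pattern was laid down by the wellorder coding of some pair $(x,y)$ is a candidate witness for the $\Pi_2^1$ formula defining $\calA_2$, and conversely: since the two sections use different conventions for $\Delta$ (compare $\Delta(x\ast y)$ in Section~3 with $\Delta(s)=\set{2n+1}[n\in s]\union\set{2n+2}[n\in\sup s\setminus s]$ in Section~4), one must rule out that some real $a'\notin\calA_2$ has $\Delta(a')$ contained, after an offset, in the set of indices made nonstationary inside a wellorder block, and symmetrically for spurious pairs. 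Disjointness of the blocks does not by itself exclude this, because $\overline{\alpha}$ ranges over all ordinals, not just block starting points. The paper's two-sequence trick dissolves the problem: the $\calA_2$ formula refers only to $\vec{S}^0$ and the wellorder formula only to $\vec{S}^1$, and every $S\in\vec{S}^1$ remains stationary from the point of view of the $\calA_2$ formula (and vice versa) because the corresponding forcings are proper or $(\omega_1\setminus S')$-proper for $S'$ in the other family. To repair your version you would either need to prove the no-spurious-witness claim directly from the combinatorics of the two $\Delta$'s, or simply adopt the paper's disjoint-sequences device, which is the cleaner route.
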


\begin{proof}

We work in a model of $V=L$. The following can be obtained 
analagously as in Lemma \ref{statcostat sequence}, using
Solovay's theorem on the existence of $\omega_1$-many 
pairwise disjoint stationary subsets of $\omega_1$ and 
the relativized $\lozenge$ sequence 
$\lozenge_{T}$ for stationary $T \subseteq \omega_1$;
the assertion $\lozenge_{T}$ holds under $V=L$ for any
such $T$. 

\begin{lemma}
There are pairwise disjoint stationary subsets $\{T_i\}_{i\in 3}$
of $\omega_1$ such that for each $i \in 2$ there is a  
sequence 
$\vec{S}^i = \langle S_\alpha^i \subseteq T_i : \alpha < \omega_2 \rangle$
of stationary costationary subsets of $\omega_1$, 
such that for distinct $\alpha, \beta < \omega_2$, 
$S_\alpha^i \intersection S_\beta^i$ is bounded. 
Moreover, whenever $\calM, \calN$ are suitable models such 
that $\omega_1^\calM = \omega_1^\calN$, then 
$\langle (S_\alpha^i)^\calM : \alpha < \omega_2^\calM \intersection 
\omega_2^\calN \rangle = 
\langle (S_\alpha^i)^\calN : \alpha < \omega_2^\calM \intersection 
\omega_2^\calN \rangle$. 
\end{lemma} 

 
Let $\calA_1$ be a coanalytic tight mad family, 
and fix $\Sigma_1$-definable bookeeping function 
$F \from \mathrm{Lim} \intersection \omega_2 \to 
    L_{\omega_2}$, and a $\Sigma_1$-definable almost disjoint family 
    $\mathcal{R}$ such that $F, \calR$ are
    as in the proof of Corollary \ref{delta 13 with a < s}
    and Definition \ref{def fz}.
 Define a countable support iteration
 $\langle \mathbb{P}_\alpha , \dot{\mathbb{Q}}_\beta : 
 \alpha \leq \omega_2, \beta < \omega_2 \rangle$, with
 $\mathbb{P}_0$ being the trivial forcing. Let $G_\alpha$
 be $\mathbb{P}_\alpha$-generic over $L$; the wellordering 
 $<_\alpha$ on the reals of $L[G_\alpha]$ is defined
 exactly as for the proof of Theorem \ref{delta 13 with a < s}. 
 Let $\dot{\mathbb{Q}}_\alpha$ be a $\mathbb{P}_\alpha$-name
 for the two-step iteration $\dot{\mathbb{Q}}_\alpha^0 \ast
 \dot{\mathbb{Q}}_\alpha^1$ such that 
 $\dot{\mathbb{Q}}_\alpha^0$ is a 
 $\mathbb{P}_\alpha$-name for $\mathbb{Q}$ of Definition 
 \ref{def creature}. Unless one of the following cases
 occurs, $\dot{\mathbb{Q}}_\alpha^1$ is a 
 $\mathbb{P}_\alpha \ast \dot{\mathbb{Q}}_\alpha^0$-name
 for the trivial forcing. 
 
 \underline{Case I:} $\alpha$ is a limit ordinal 
 and $F(\alpha) = \set{\dot{b}_i}[i \in \omega]$
 is a sequence of $\mathbb{P}_\alpha \ast \dot{\mathbb{Q}}_\alpha^0$-names such that 
 in $V[G_\alpha]$, $\dot{b}_i^G$ is an element of 
 $\calI(\calA_\alpha^2)^+$ for each $i \in \omega$. 
 Then let $\dot{\mathbb{Q}}_\alpha^1$ be a 
 $\mathbb{P}_\alpha \ast \dot{\mathbb{Q}}_\alpha^0$ name for the forcing notion 
 $\mathbb{K}_\alpha$ of Definition \ref{def fz}, with 
 respect to the same countable limit ordinal $\eta_\alpha \in 
 \omega_1$, and modifying item (1) by letting 
 $c_k \subseteq \omega_1 \setminus {\eta_\alpha}$ be a closed bounded
 subset such that $S_{\alpha + k}^0 \intersection c_k = 
 \emptyset$.  
 
 \underline{Case II:} $F(\alpha) = \set{ \sigma_x^\alpha, 
 \sigma_y^\alpha}$ is a pair of $\mathbb{P}_\alpha$-names 
 for reals in $L[G_\alpha]$ such that 
 $\sigma_x^\alpha <_L \sigma_y^\alpha$ (i.e., 
 $x = (\sigma_x^\alpha)^G <_\alpha y = (\sigma_y^\alpha)^G$). 
 In this case define $\dot{\mathbb{Q}}_\alpha^1$ to 
 be a $\mathbb{P}_\alpha \ast \dot{\mathbb{Q}}_\alpha^0$-name
 for $\mathbb{Q}_\alpha^1 = 
 \mathbb{K}_\alpha^0 \ast \mathbb{K}_\alpha^1 \ast 
 \mathbb{K}_\alpha^2$ as defined in the proof of 
 Corollary \ref{delta 13 with a < s}, 
 however modifying the definition of $\mathbb{K}_\alpha^0$
 by taking closed bounded subsets of $\omega_1 \setminus
 S^1_{\alpha+k}$, for $k \in \Delta(x_\alpha \ast y_\alpha)$. 

This completes the definition of $\mathbb{P} = 
\mathbb{P}_{\omega_2}$. Note that for all
$\alpha < \omega_2$, $\dot{\mathbb{Q}}_\alpha$ is a 
$\mathbb{P}_\alpha$-name for either a proper, 
a $T_1 \union T_2$-proper, or a 
$T_0 \union T_2$-proper forcing notion, and in each 
case $\dot{\mathbb{Q}}_\alpha$ strongly preserves the
tightness of $\calA_1$.  Therefore
$\mathbb{P}$ is $T_2$-proper and so preserves 
$\omega_1$ as well as the tightness of $\calA_1$. 
Let $G$ be $\mathbb{P}$-generic over $L$.
 For each $\alpha < \omega_2$, $\mathbb{Q}_\alpha = 
\mathbb{Q}$ adds a real 
not split by the ground model reals, so
$(\mathfrak{s} = \aleph_2)^{L[G]}$. For cofinally many $\alpha < \omega_2$, $\mathbb{Q}_\alpha$ 
adds an infinite $a_\alpha \subseteq \omega$ such that 
$\calA_2 = \set{a_\alpha}[\alpha < \omega_2]$ is a tight mad family with a $\Pi_2^1$ definition in $L[G]$. Moreover the wellorder
$<_G = \bigcup_{\alpha < \omega_2}$ can be shown to be
a $\Delta_3^1$ wellorder of the reals of $L[G]$, as 
in Theorem \ref{delta 13 with a < s}. Then altogether we have  that
$L[G]$ witnesses
the conclusions of the theorem. 
 \end{proof}

\section{Concluding remarks and Questions}

The constellation $\aleph_1= \mathfrak{b} < \mathfrak{a} = 
\mathfrak{s}$, first established in \cite{Shelah84}, is shown 
to be consistent with a $\Delta_3^1$ wellorder of the reals 
in \cite{FF2010}. The consistency of 
$\mathfrak{a} = \mathfrak{c}$ with a $\Delta_3^1$ wellorder
of the reals and a tight projective witness of $\mathfrak{a}$ is 
established in \cite{Friedmanzdomskyy}. 
In our results 
$\mathfrak{a}$ stays small, with a definable witness. 
Of interest however remains:

\begin{question}
    Is $\mathfrak{b} < \mathfrak{a}$ 
    consistent with a $\Pi_2^1$ (tight) mad family? 
\end{question}

A model of $\aleph_2 < \mathfrak{b} = \mathfrak{c}$ with 
a $\Pi_1^1$  witness to $\mathfrak{a}$ is due to Brendle and Khomskii \cite{BK12}.
This construction however relies heavily on the preservation of splitting 
families and thus of interest remains:

\begin{question}
Is it consistent with
$\mathfrak{a} < \mathfrak{c}$ or even with 
$\mathfrak{a} < \mathfrak{s} = \mathfrak{c}$
that there exist
coanalytic mad families of sizes 
$\mathfrak{a}$ and $\mathfrak{c}$? 
\end{question}

Note that our results heavily depend on the use of countable support iterations
and so our techniques can only yield models with
  $\mathfrak{c} = \aleph_2$. 
  A natural question is:
\begin{question}\label{3 sizes}
Is it consistent that $\card{\mathrm{spec}(\mathfrak{a})} 
\geq 3$ and for each $\kappa \in \mathrm{spec}(\mathfrak{a})$
there exists a projective mad family of 
size $\kappa$ with an optimal definition? 
\end{question}

\bibliographystyle{amsalpha}
\bibliography{bibtristan}

\providecommand{\bysame}{\leavevmode\hbox to3em{\hrulefill}\thinspace}
\providecommand{\MR}{\relax\ifhmode\unskip\space\fi MR }
\providecommand{\MRhref}[2]{%
  \href{http://www.ams.org/mathscinet-getitem?mr=#1}{#2}
}
\providecommand{\href}[2]{#2}
\begin{thebibliography}{BHST22}

\bibitem[Abr10]{Abraham2010}
Uri Abraham, \emph{Proper forcing}, Handbook of {S}et {T}heory. {V}ols. 1, 2, 3 (Matthew Foreman and Akihiro Kanamori, eds.), Springer, Dordrecht, 2010, pp.~333--394.

\bibitem[BFB22]{bergfalk2022projective}
Jeffrey Bergfalk, Vera Fischer, and Corey {Bacal Switzer}, \emph{Projective well orders and coanalytic witnesses}, Annals of Pure and Applied Logic \textbf{173} (2022), no.~8, 103135.

\bibitem[BHK76]{BHK76}
James~E. Baumgartner, Leo~A. Harrington, and Eugene~M. Kleinberg, \emph{Adding a closed unbounded set}, The Journal of Symbolic Logic \textbf{41} (1976), no.~2, 481--482.

\bibitem[BHST22]{Bakke_Haga_2021}
Karen Bakke~Haga, David Schrittesser, and Asger T\"ornquist, \emph{Maximal almost disjoint families, determinacy, and forcing}, Journal of Mathematical Logic \textbf{22} (2022), no.~1, Paper No. 2150026, 42.

\bibitem[BK13]{BK12}
J\"org Brendle and Yurii Khomskii, \emph{Mad {F}amilies {C}onstructed from {P}erfect {A}lmost {D}isjoint {F}amilies}, The Journal of Symbolic Logic \textbf{78} (2013), no.~4, 1164--1180.

\bibitem[Bla93]{Blass94}
Andreas Blass, \emph{Simple cardinal characteristics of the continuum}, Set theory of the reals ({R}amat {G}an, 1991), Israel Mathematical Conference Proceedings, vol.~6, Bar-Ilan Univ., Ramat Gan, 1993, pp.~63--90.

\bibitem[BPS80]{BPS80}
Bohuslav Balcar, Jan Pelant, and Petr Simon, \emph{The space of ultrafilters on $\mathbb{N}$ covered by nowhere dense sets}, Fundamenta Mathematicae \textbf{110} (1980), no.~1, 11--24 (en).

\bibitem[Bri24]{Brian24}
Will Brian, \emph{Partitioning the real line into {B}orel sets}, The Journal of Symbolic Logic \textbf{89} (2024), no.~2, 549–568.

\bibitem[BY05]{BY05}
J\"org Brendle and Shunsuke Yatabe, \emph{Forcing indestructibility of {MAD} families}, Annals of Pure and Applied Logic \textbf{132} (2005), 271--312.

\bibitem[CF11]{CF2011}
Andr\'es~Eduardo Caicedo and Sy-David Friedman, \emph{{BPFA} and {P}rojective {W}ell-orderings of the {R}eals}, The Journal of Symbolic Logic \textbf{76} (2011), no.~4, 1126--1136.

\bibitem[Cum10]{Cummings2010}
James Cummings, \emph{Iterated forcing and elementary embeddings}, Handbook of set theory. {V}ols. 1, 2, 3 (Matthew Foreman and Akihiro Kanamori, eds.), Springer, Dordrecht, 2010, pp.~775--883.

\bibitem[Dav82]{RDavid}
Ren\'e David, \emph{A very absolute {$\Pi_2^1$} real singleton}, Annals of Mathematical Logic (1982), no.~3 (2-3), 101--120.

\bibitem[Dev17]{Devlin}
Keith~J. Devlin, \emph{Constructibility}, Perspectives in Logic, Cambridge University Press, 2017.

\bibitem[Dow95]{dow95}
Alan Dow, \emph{More set-theory for topologists}, Topology and its Applications \textbf{64} (1995), no.~3, 243--300.

\bibitem[FF10]{FF2010}
Vera Fischer and Sy~David Friedman, \emph{Cardinal characteristics and projective wellorders}, Annals of Pure and Applied Logic \textbf{161} (2010), no.~7, 916--922.

\bibitem[FFK13]{FFK13}
Vera Fischer, Sy~David Friedman, and Yurii Khomskii, \emph{Co-analytic {M}ad families and {D}efinable {W}ellorders}, Archive for Mathematical Logic \textbf{52} (2013), no.~7-8, 809--822.

\bibitem[FFST25]{FFST}
Vera Fischer, Sy~David Friedman, David Schrittesser, and Asger T\"ornquist, \emph{Good projective witnesses}, Annals of Pure Applied Logic \textbf{176} (2025), no.~8, Paper No. 103606, 29.

\bibitem[FFZ11]{FFZ11}
Vera Fischer, Sy-David Friedman, and Lyubomyr Zdomskyy, \emph{Projective wellorders and mad families with large continuum}, Annals of Pure and Applied Logic \textbf{162} (2011), 853--862.

\bibitem[Fis08]{Fischerthesis}
Vera Fischer, \emph{The consistency of arbitrarily large spread between the bounding and the splitting numbers}, Phd thesis, York university, Toronto, Ontario, 2008.

\bibitem[FS19]{Fischershelah19}
Vera Fischer and Saharon Shelah, \emph{The spectrum of independence}, Archive for Mathematical Logic \textbf{58} (2019), no.~7-8, 877--884.

\bibitem[FS21]{FSmed}
Vera Fischer and David Schrittesser, \emph{A {S}acks indestructible co-analytic maximal eventually different family}, Fundamenta Mathematicae \textbf{252} (2021), no.~2, 179--201.

\bibitem[FS22]{Fischershelah22}
Vera Fischer and Saharon Shelah, \emph{The spectrum of independence, {II}}, Annals of Pure and Applied Logic \textbf{173} (2022), no.~9, Paper No. 103161, 9.

\bibitem[FS25]{FS25}
Vera Fischer and Lukas Schembecker, \emph{Partitions of the {B}aire space into compact sets}, Fundamenta Mathematicae \textbf{269} (2025), no.~1, 45--69.

\bibitem[FSS25]{FSS25}
Vera Fischer, L.~Schembecker, and David Schrittesser, \emph{Tight cofinitary groups}, Annals of Pure and Applied Logic \textbf{176} (2025), 103570.

\bibitem[FZ10]{Friedmanzdomskyy}
Sy-David Friedman and Lyubomyr Zdomskyy, \emph{Projective mad families}, Annals of Pure and Applied Logic \textbf{161} (2010), no.~12, 1581--1587.

\bibitem[GHT20]{GHT}
Osvaldo Guzman, Michael Hru\v{s}\'ak, and Osvaldo Tellez, \emph{Restricted {MAD} families}, The Journal of Symbolic Logic \textbf{85} (2020), no.~1, 149–165.

\bibitem[G{\"o}d39]{Godel39}
Kurt G{\"o}del, \emph{Consistency-proof for the generalized continuum-hypothesis}, Proceedings of the National Academy of Sciences \textbf{25} (1939), no.~4, 220--224.

\bibitem[Gol93]{Goldsterntools}
Martin Goldstern, \emph{Tools for your forcing construction}, Set theory of the reals ({R}amat {G}an, 1991), Israel Mathematical Conference Proceedings, vol.~6, Bar-Ilan Univ., Ramat Gan, 1993, pp.~305--360.

\bibitem[Gol98]{Goldsterntaste}
\bysame, \emph{A {T}aste of {P}roper {F}orcing}, Set Theory (Dordrecht) (Carlos~Augusto Di~Prisco, Jean~A. Larson, Joan Bagaria, and A.~R.~D. Mathias, eds.), Springer Netherlands, 1998, pp.~71--82.

\bibitem[Har77]{Harrington77}
Leo Harrington, \emph{Long projective wellorderings}, Annals of Mathematical Logic \textbf{12} (1977), no.~1, 1--24.

\bibitem[Hec72]{Hechler72}
Stephen~H. Hechler, \emph{Short complete nested sequences in {$\beta N\backslash N$} and small maximal almost-disjoint families}, General Topology and its Applications \textbf{2} (1972), 139--149.

\bibitem[HGF03]{HrusakFerreira2003}
Michael Hrušák and Salvador García-Ferreira, \emph{Ordering {M}ad {F}amilies a {L}a {K}atětov}, The Journal of Symbolic Logic \textbf{68} (2003), no.~4, 1337--1353.

\bibitem[Hru01]{HrusakRationals}
Michael Hrušák, \emph{{MAD} families and the rationals}, Commentationes Mathematicae Universitatis Carolinae \textbf{42} (2001), no.~2, 345--352.

\bibitem[Jec97]{Jech2nd}
Thomas Jech, \emph{Set {T}heory}, 2nd ed., Perspectives in Mathematical Logic, Springer-Verlag, Berlin, Heidelberg, 1997.

\bibitem[Jec03]{Jech}
\bysame, \emph{Set theory: The third millennium edition, revised and expanded}, 3rd ed., Springer Monographs in Mathematics, Springer-Verlag, Berlin, Heidelberg, 2003.

\bibitem[Kur01]{Kurilic}
Miloš~S. Kurilić, \emph{Cohen-{S}table families of {S}ubsets of {I}ntegers}, The Journal of Symbolic Logic \textbf{66} (2001), no.~1, 257--270.

\bibitem[Mal89]{Malykhin}
V.~I. Malykhin, \emph{Topological properties of {C}ohen generic extensions}, Trudy Moskov. Mat. Obshch. \textbf{52} (1989), 3--33, 247.

\bibitem[Mat77]{MATHIAS197759}
Adrian R.~D. Mathias, \emph{Happy families}, Annals of Mathematical Logic \textbf{12} (1977), no.~1, 59--111.

\bibitem[Mil89]{miller1989}
Arnold~W. Miller, \emph{Infinite combinatorics and definability}, Annals of Pure and Applied Logic \textbf{41} (1989), no.~2, 179--203.

\bibitem[NN18]{NN18}
Itay Neeman and Zach Norwood, \emph{Happy and mad families in $l(\mathbb{R})$}, The Journal of Symbolic Logic \textbf{83} (2018), 572--597.

\bibitem[Rag09]{Raghavan}
Dilip Raghavan, \emph{Maximal almost disjoint families of functions}, Fundamenta Mathematicae \textbf{204} (2009), no.~3, 241--282.

\bibitem[RS99]{RScreatures}
Andrzej Ros{\l}anowski and Saharon Shelah, \emph{Norms on {P}ossibilities. {I}: {F}orcing with {T}rees and {C}reatures}, Memoirs of the American Mathematical Society, American Mathematical Society, 1999.

\bibitem[She82]{Shelahproperforcing}
Saharon Shelah, \emph{Proper forcing}, Lecture Notes in Mathematics, vol. 940, Springer-Verlag, Berlin-New York, 1982.

\bibitem[She84]{Shelah84}
\bysame, \emph{On cardinal invariants of the continuum}, Axiomatic set theory ({B}oulder, {C}olo., 1983), Contemporary Mathematics, vol.~31, American Mathematical Society, Providence, RI, 1984, pp.~183--207.

\bibitem[She17]{ShelahPIP}
\bysame, \emph{Proper and improper forcing}, 2 ed., Perspectives in Logic, Cambridge University Press, 2017.

\bibitem[SJ70]{solovayjensen}
Robert~B. Solovay and Ronald~M. Jensen, \emph{Some applications of almost disjoint sets}, Mathematical Logic and Formulations of Set Theory (1970), 84--104.

\bibitem[SS15]{Shelahspinas}
Saharon Shelah and Otmar Spinas, \emph{{MAD} spectra}, Journal of Symbolic Logic (2015), no.~80, 243--262.

\bibitem[Tö18]{Tornquistdefinability}
Asger Törnquist, \emph{Definability and almost disjoint families}, Advances in Mathematics \textbf{330} (2018), 61--73.

\end{thebibliography}

\end{document}